\theoremstyle{plain}
\newtheorem{theorem}{Theorem}
\newtheorem{lemma}{Lemma}
\newtheorem{corollary}{Corollary}
\newtheorem{proposition}{Proposition}
\theoremstyle{definition}
\newtheorem{definition}{Definition}
\newtheorem{assumption}{Assumption}
\theoremstyle{remark}
\newtheorem{remark}{Remark}
\DeclareMathOperator*{\esssup}{ess\,sup}
\DeclareMathOperator{\co}{co}
\DeclareMathOperator*{\minimise}{minimise}
\author{M.V. Dolgopolik\footnote{Institute for Problems in Mechanical Engineering, Russian Academy of Sciences, Saint
Petersburg, Russia}\footnote{This work was performed in IPME RAS and supported by the Russian Science Foundation (Grant
No. 20-71-10032).}}
\title{An adaptive exact penalty method for nonsmooth optimal control problems with nonsmooth nonconvex state and
control constraints}
\begin{document}

\maketitle

\begin{abstract}
A class of exact penalty-type local search methods for optimal control problems with nonsmooth cost functional,
nonsmooth (but continuous) dynamics, and nonsmooth state and control constraints is presented, in which the the penalty
parameter and several line search parameters are adaptively adjusted during the optimisation process. This class of
methods is applicable to problems having a known DC (Difference-of-Convex functions) structure and in its core is based
on the classical DCA method, combined with the steering exact penalty rules for updating the penalty parameter and an
adaptive nonmonotone line search procedure. Under the assumption that all auxiliary subproblems are solved only
approximately (that is, with finite precision), we prove the correctness of the proposed family of methods and present
its detailed convergence analysis. The performance of several different versions of the method is illustrated by means
of a numerical example, in which the methods are applied to a semi-academic optimal control problem with a nonsmooth
nonconvex state constraint.
\end{abstract}

\section{Introduction}

Nonsmooth optimal control problems for continuous (as opposed discontinuous/switching) systems naturally arise in many
applied fields, including economics and engineering (for particular examples see, e.g. Clarke\cite{Clarke}, Outrata et
al.\cite{Outrata83,OutrataSchindler,Outrata88}, Dyer and McReynolds\cite{DyerMcReynolds}, and Krylov\cite{Krylov},
etc.). Although such nonsmooth optimal control problems have been an object of active theoretical research for many
years (see, e.g. Clarke\cite{Clarke}, Vinter\cite{Vinter}, Loewen\cite{Loewen}, Mordukhovich\cite{Mordukhovich}, etc.),
relatively little effort has been put into development of numerical methods for solving such problems for ODE systems.
Smooth penalty methods for some minimax optimal control problems were proposed by Gorelik and
Tarakanov\cite{GorelikTarakanov89,GorelikTarakanov92}. An efficient numerical method for optimal control of piecewise
smooth systems was recently developed by Nurkanovi\'{c} and Diehl\cite{NurkanovicDiehl}, while a method based on
discretisation of nonsmooth optimal control problems with state constraints with the use of the so-called pseudospectral
knotting technique was presented by Ross and Fahroo\cite{RossFahroo}. An approach to some nonsmooth optimal control
problems based on smoothing approximations was considered by Noori Skandari et al.\cite{Noori2015,Noori2016}. Finally,
the quasidifferential descent method for optimal control problems with nonsmooth objective functional was developed by
Fominyh\cite{Fominyh} (see also the references therein). 

The main goal of this paper is to present a class of exact penalty-type local search methods for general nonsmooth
optimal control problems for continuous systems with nonsmooth nonconvex state and control constraints. Exact penalty
functions and exact penalty methods for various optimal control problems have attracted a significant attention of
researchers. Local properties of exact penalty functions for optimal control problems with state constraints were first
studied by Lasserre\cite{Lasserre}, and Xing et al.\cite{Xing89,Xing94}. Global properties of exact penalty functions
were studied for free-endpoint problems by Demyanov et
al.\cite{DemyanovKarelin98,DemyanovKarelin2000_InCollect,DemyanovKarelin2000}, problems with state inequality constrains
by Hammoudi and Benharrat\cite{HammoudiBenharrat}, and problems for some implicit control systems by Demyanov et
al.\cite{DemyanovTamasyan2005}. A general theory of exact penalty functions for optimal control problems was developed
by Dolgopolik et al.\cite{DolgopolikFominyh,Dolgopolik_OptControl}.

Exact penalty methods for smooth optimal control problems, including problems with state constraints, have been
developed by Maratos\cite{MaratosPHD}, Mayne et al.\cite{MaynePolak80,MaynePolak80_2,MaynePolak87,SmithMayne88},
Polak\cite{Polak_book}, and Fominyh et al.\cite{FominyhKarelin2018}. An exact penalty method for optimal control 
problems for time-delay systems was analysed by Wong and Teo\cite{WongTeo91}. Exact penalty methods based on the Huyer
and Neumaier penalty function\cite{HuyerNeumaier,Dolgopolik_ExPen_II} for state constrained optimal control
problems were studied in Refs.\cite{LiYu2011,JiangLin2012,LinLoxton2014}, while exact penalty methods for general
\textit{nonsmooth} optimal control problem, based on bundle methods from nonsmooth optimisation, were developed by
Outrata et al.\cite{Outrata83,OutrataSchindler,Outrata88}.

In this paper, we develop a class of exact penalty methods for constrained nonsmooth optimal control problems based on
DC (Difference-of-Convex functions) optimisation techniques. DC optimisation is based on the use of decompositions of
the objective function and constraints as the difference of convex functions, which allows one to utilise the
well-developed tools of convex analysis and optimisation to tackle DC optimisation problems. This approach proved to be
very fruitful and has found a wide range of applications in various
fields\cite{LeThiDinh2005,LeThiDinh2018,Nouiehed,LippBoyd,Tuy95_Collect,HorstThoai,Tuy_2000}.

One of the most well-known smooth and nonsmooth DC optimisation methods is the so-called DCA (Difference-of-Convex
functions Algorithm) method, developed and applied to various problems in the works of Pham Dinh and Le Thi et
al.\cite{PhamDinhLeThi96,DinhLeThi1997,LeThiPhamDinh2014,LeThiPhamDinh2014b} (for a detailed survey of DCA, its
modifications, and applications see Refs.\cite{LeThiDinh2005,LippBoyd,LeThiDinh2018,AckooijDeOliveira2019}). Recently,
an improved version of the DCA, called the boosted
DCA\cite{AragonArtachoFlemingVuong,AragonArtachoVuong,FerreiraSantosSouza}, was
proposed. In contrast to the original DCA, boosted DCA utilises line search (\textit{nonmonotone} line search in the
nonsmooth case; see Ref.\cite{FerreiraSantosSouza}) to improve its overall performance. As was demonstrated by
numerical experiments in the aforementioned references, the boosted DCA significantly outperforms the standard DCA on
various test problems.

Recently, DCA was extended to the case of finite dimensional nonsmooth DC optimisation problems with equality and
inequality constraints by Strekalovsky\cite{Strekalovsky2020} and Dolgopolik\cite{Dolgopolik_StExPenDCA} with the use
of the so-called steering exact penalty technique developed by Byrd et al.\cite{ByrdNocedalWaltz,ByrdLopezCalvaNocedal}
for nonlinear programming problems. Steering exact penalty technique provides simple rules for adjusting the penalty
parameter in a way that ensures global convergence of the corresponding exact penalty methods. Let us also note that DC
optimisation methods has been applied to \textit{smooth} optimal control problems in the works of Strekalovsky et
al.\cite{Strekalovsky2013,StrekalovskyYanulevich2008,StrekalovskyYanulevich2013,StrY2016,Strekalovsky2021}. However, to
the best of the author's knowledge, DC optimisation approach to \textit{nonsmooth} optimal control problems has not been
considered earlier.

Our goal is to develop a class of adaptive exact penalty methods, called the boosted steering exact penalty DCA 
(B-STEP-DCA), for constrained nonsmooth optimal control problems, whose DC structure is explicitly known, based on the
combination of steering exact penalty
techniques\cite{ByrdNocedalWaltz,ByrdLopezCalvaNocedal,Strekalovsky2020,Dolgopolik_StExPenDCA} and the boosted DCA with
nonmonotone line search\cite{AragonArtachoFlemingVuong,AragonArtachoVuong,FerreiraSantosSouza} under the assumption
that all computations are performed with finite precision. The use of the steering exact penalty techniques allows the
method to automatically and adaptively adjust the penalty parameter to ensure convergence of the constructed sequence
and sufficient decrease of the infeasibility measure on each iteration. In turn, the use of nonmonotone line search with
adaptive choice of the trial step size and line search tolerance, proposed in Ref.~\cite{FerreiraSantosSouza} for
unconstrained problems, is aimed at improving the overall efficiency of the method and reducing the number of iterations
before termination. Moreover, the nonmonotone line search also has an innertial effect that in some cases might help the
method to ``jump over'' some critical points and find a better solution (cf.~Refs.\cite{deOliveiraTcheou}).

The paper is organised as follows. The problem formulation and main assumptions on problem data are presented in
Section~\ref{sect:ProblemFormulation}. Section~\ref{sect:OptimalityConditions} is devoted to a discussion of optimality
conditions and approximate criticality notions for nonsmooth DC optimal control problems. A detailed description of
B-STEP-DCA, including the description of stopping criteria, strategies for choosing nonmonotone line search tolerances,
and rules for adjusting the trial step sizes, is given in Section~\ref{sect:MethodDescription}. The correctness of the
method, that is the fact that all auxiliary subproblems are solved only a finite number of times on each iteration of
the method, is proved in Section~\ref{sect:Correctness}, while global convergence of the method is studied in
Section~\ref{sect:ConvergenceToCriticalPoints}. Some elementary convergence properties of the method, as well as
justification of stopping criteria, are presented in Subsection~\ref{subsect:ElementaryAnalysis}. The convergence of
sequences generated by B-STEP-DCA to approximately critical points is proved in
Subsection~\ref{subsect:GlobalConvergence}, the convergence of trajectories vs. the convergence of controls is studied
in Subsection~\ref{subsect:Control_vs_traj}, while the convergence of infeasibility measure is analysed in
Subsection~\ref{subsect:InfeasMeas}. Finally, some results of numerical experiments demonstrating the robustness of
B-STEP-DCA with respect to computational errors and the effects of the nonmonotone line search on its performance are
presented in Section~\ref{sect:NumericalExperiments}.

\section{Problem formulation}
\label{sect:ProblemFormulation}

Let $W^{1, s}([0, T], \mathbb{R}^m)$ with $1 \le s \le \infty$ be the Banach space of all absolutely continuous
functions $x \colon [0, T] \to \mathbb{R}^m$ such that $\dot{x} \in L^s([0, T]; \mathbb{R}^m)$ equipped with the norm 
\[
  \| x \|_{1, s} = \| x \|_s + \| \dot{x} \|_s, \quad
  \| x \|_s = \bigg( \int_0^T |x(t)|^s \, dt \bigg)^{\frac{1}{s}} \quad \text{if } s < +\infty, \quad
  \| x \|_{\infty} = \esssup_{t \in [0, T]} |x(t)|,
\]
where $|\cdot|$ is the Euclidean norm. Recall that this space can be identified with the cooresponding Sobolev space 
(see, e.g. Leoni\cite{Leoni}). Denote 
$X := W^{1, \infty}([0, T], \mathbb{R}^n) \times L^{\infty}([0, T], \mathbb{R}^m)$, and let the space $X$ be endowed
with the norm $\| (x, u) \| = \| x \|_{1, \infty} + \| u \|_{\infty}$. To include several different problem
formulations into one setting, as well as to unite several versions of the method developed in this article into one
theoretical scheme, suppose that a convex set $X_0 \subseteq X$ closed in the topology of the space 
$W^{1, 1}([0, T], \mathbb{R}^n) \times L^1([0, T], \mathbb{R}^m)$ is given. 

The set $X_0$ can be viewed as one describing the nonfunctional constraint $(x, u) \in X_0$, that is, describing
the \textit{convex} constraints that are \textit{not} included into the penalty function and are taken into account with
the use of some other techniques. One can opt to include all constraints (both nonconvex and convex ones) into the
penalty function and define $X_0 = X$ or to include some (or all) convex constraints into the definition of the set
$X_0$ and take them into account within the convex optimisation subroutine used on each iteration of the method
presented in this article. Each choice leads to a different version of the method. The effect of such choice on the
overall performance of the method is discussed below and is studied numerically for a particular optimal control
problem in Section~\ref{sect:NumericalExperiments}. In addition to defining nonfunctional convex constraints, the set
$X_0$ can also describe a control parametrisation\cite{TeoGohWong,GohTeo,RehbocktTeoJenningsLee}/discretisation scheme.

Throughout this article we consider the following nonsmooth nonconvex optimal control problem with mixed
state-control constraints:
\begin{align*}
  &\minimise_{(x, u) \in X_0} \enspace J(x, u) = \int_0^T F_0(x(t), u(t), t) \, dt + f_0(x(0), x(T)),
  \\
  &\text{subject to} \enspace 
  \begin{aligned}[t]
    &\dot{x}(t) = F(x(t), u(t), t) \quad \text{for a.e. } t \in [0, T], \hspace{5cm} (\mathcal{P})
    \\
    &f_i(x(0), x(T)) \le 0, \quad i \in \mathcal{I}, \quad f_j(x(0), x(T)) = 0, \quad j \in \mathcal{E},
    \\
    &\Xi_s(x(t), u(t), t) \le 0 \quad \text{for a.e. } t \in [0, T], \quad s \in \mathcal{M},
  \end{aligned}
\end{align*}
Here $x(t) \in \mathbb{R}^n$ is the system state at time $t$, $u(t) \in \mathbb{R}^m$ is a control input, 
and the functions $F_0,\Xi_s \colon \mathbb{R}^n \times \mathbb{R}^m \times [0, T] \to \mathbb{R}$, 
$F \colon \mathbb{R}^n \times \mathbb{R}^m \times [0, T] \to \mathbb{R}^n$, and 
$f_i \colon \mathbb{R}^{2n} \to \mathbb{R}$ are assumed to be DC (Difference-of-Convex functions) with respect to 
$(x, u)$ (or simply DC in the case of $f_i$). We also suppose that DC decompositions of these functions of the form
\begin{gather*}
  F_0(x, u, t) = G_0(x, u, t) - H_0(x, u, t), \quad F(x, u, t) = G(x, u, t) - H(x, u, t), 
  \\
  f_i(x, y) = g_i(x, y) - h_i(x, y), \quad \Xi_s(x, u, t) = p_s(x, u, t) - q_s(x, u, t),
\end{gather*}
are known. Here the real-valued functions $G_0, H_0, p_s$, and $q_s$ are convex in $(x, u)$, the vector-valued functions
$G$ and $H$ are coordinate-wise convex in $(x, u)$, while the real-valued functions $g_i$ and $h_j$ are convex. No
smoothness assumptions on these functions are imposed (that is, they can be nonsmooth). We also
suppose that $\mathcal{I} = \{ 1, \ldots, \ell_I \}$, $\mathcal{E} = \{ \ell_I + 1, \ldots, \ell_E \}$, and
$\mathcal{M} = \{ 1, \ldots, \ell_M \}$ with $\ell_I, \ell_E, \ell_M \in \mathbb{N}$ are finite index sets, any one of
which can be empty. 

\begin{remark}
Let us note that the description of the exact penalty method, the proof of its correctness, and its convergence analysis
presented in this article are \textit{identical} for problems with only pure state, only mixed state-control, as well as
both pure and mixed, constraints. Therefore, below we consider only mixed state constraints, since they include pure
ones as a particular case. However, one should point out that an exhaustive convergence analysis of the method
(involving, e.g. an analysis of conditions ensuring the boundedness of the penalty parameter, convergence of dual
variables/Lagrange multipliers, etc.) would require differentiation between pure and mixed state constraints. We leave
an analysis of these advanced topics as an interesting problem for future research and instead concentrate on the
description of the method, and analysis of its correctness and basic convergence properties.
\end{remark}

\begin{remark}
Although constraints on control of the form $u(t) \in U(t)$ for a.e. $t \in [0, T]$ and some convex-valued multifunction
$U \colon [0, T] \rightrightarrows \mathbb{R}^m$ are not explicitly mentioned in the formulation of the problem 
$(\mathcal{P})$, they can be included into the problem via the set $X_0$, namely, by defining 
$X_0 = \{ (x, u) \in X \mid u(t) \in U(t) \text{ for a.e. } t \in [0, T] \}$. Alternatively, if the constraints
have the form $\underline{u}_i \le u_i(\cdot) \le \overline{u}_i$ for some vectors 
$\underline{u}, \overline{u} \in \mathbb{R}^m$, one can consider them as mixed constraints by putting
$\Xi_i(x, u, t) = u_i - \overline{u}_i$ and $\Xi_{m + i}(x, u, t) = \underline{u}_i - u_i$, $i \in \{ 1, \ldots, m \}$.
These two options lead to two significantly different versions of the exact penalty method (see
Remark~\ref{rmrk:BSEP_DCA_DifferentVersions} below for more details).
\end{remark}

Denote by $\partial f(x)$ the subdifferential (in the sense of convex analysis\cite{Rockafellar}) of a function $f$ at a
point $x$, and denote by $\partial_x g(x, y)$ the subdifferential (in the sense of convex analysis) of the function 
$g(\cdot, y)$ at a point $x$. Hereinafter, we impose the following assumption on measurability, continuity, and
generalised differentiability properties on the problem data.

\begin{assumption} \label{assumpt:ContDiff}
The following conditions hold true:
\begin{enumerate}
\item{$G_0, H_0, G, H, p_s$, and $q_s$ are Carath\'{e}odory functions (i.e. they are continuous in $(x, u)$ 
for a.e. $t \in [0, T]$, and measurable in $t$ for any $(x, u)$) such that for any $R > 0$ there exists $\gamma_R > 0$
for which
\begin{equation} \label{eq:GrowthCondition}
\begin{split}
  |G_0(x, u, t)| \le \gamma_R, \enspace &|H_0(x, u, t)| \le \gamma_R, \enspace
  |G(x, u, t)| \le \gamma_R, \enspace |H(x, u, t)| \le \gamma_R, 
  \\
  &|p_s(x, u, t)| \le \gamma_R, \enspace |q_s(x, u, t)| \le \gamma_R
\end{split}
\end{equation}
for a.e. $t \in [0, T]$ and for any $(x, u) \in \mathbb{R}^n \times \mathbb{R}^m$ with $|x| + |u| \le R$;
}

\item{for any $(x, u) \in X$ the subdifferential mappings 
\begin{gather*}
  \partial_{x, u} G_0(x(\cdot), u(\cdot), \cdot), \quad \partial_{x, u} H_0(x(\cdot), u(\cdot), \cdot), \quad
  \partial_{x, u} G(x(\cdot), u(\cdot), \cdot), \quad \partial_{x, u} H(x(\cdot), u(\cdot), \cdot), 
  \\
  \partial_{x, u} p_s(x(\cdot), u(\cdot), \cdot), \quad \partial_{x, u} q_s(x(\cdot), u(\cdot), \cdot)
\end{gather*}
are measurable and for any $R > 0$ there exists $\gamma_R > 0$ such that
\begin{gather*}
  |v_i| \le \gamma_R \quad 
  \forall v_i \in \partial_{x, u} G_i(x(t), u(t), t) \cup \partial_{x, u} H_i(x(t), u(t), t), \quad
  \forall i \in \{ 0, 1, \ldots, n \},
  \\
  |v| \le \gamma_R \quad 
  \forall  v \in \partial_{x, u} p_s(x(t), u(t), t) \cup \partial_{x, u} q_s(x(t), u(t), t), \quad
  \forall s \in \mathcal{M}
\end{gather*}
for a.e. $t \in [0, T]$ and for any $(x, u) \in \mathbb{R}^n \times \mathbb{R}^m$ with $|x| + |u| \le R$.
}
\end{enumerate}
\end{assumption}

\begin{remark}
It should be mentioned that assumption~\ref{assumpt:ContDiff} ensures that all integrals throughout this article are
correctly defined and finite. Moreover, if the maps $G_0, G, H_0, H, p_s$, and $q_s$ do not depend on $t$, then the
assumption on the existence of $\gamma_R$ satisfying the corresponding inequalities for these functions and their
subgradients is in actuality redundant. In this case, the existence of $\gamma_R > 0$ satisfying these assumptions
follows directly from well-known properties of convex functions (see, e.g. Crlr.~10.1.1 and Thm.~24.7 in
Ref.\cite{Rockafellar}).
\end{remark}

\section{Optimality conditions}
\label{sect:OptimalityConditions}

Our goal is to develop a class of local search methods for the problem $(\mathcal{P})$ that combines the boosted DCA
with nonmonotone line search\cite{AragonArtachoFlemingVuong,AragonArtachoVuong,FerreiraSantosSouza} and the exact
penalty technique based on the use of an $L_1$ exact penalty function and the steering exact penalty 
methodology\cite{ByrdNocedalWaltz,ByrdLopezCalvaNocedal,Strekalovsky2020,Dolgopolik_StExPenDCA}.
In the case of the problem $(\mathcal{P})$ this penalty function has the form 
$\Phi_c(x, u) = J(x, u) + c \varphi(x, u)$, where
\begin{equation} \label{eq:PenTerm}
\begin{split}
  \varphi(x, u) = \sum_{i = 1}^n \int_0^T |\dot{x}_i(t) - F_i(x(t), u(t), t)| \, dt
  &+ \sum_{i = 1}^{\ell_I} \max\{ 0, f_i(x(0), x(T) \} 
  \\
  &+ \sum_{j = \ell_I + 1}^{\ell_E} |f_j(x(0), x(T))|
  + \sum_{s = 1}^{\ell_M} \int_0^T \max\{ 0, \Xi_s(x(t), u(t), t) \} \, dt.
\end{split}
\end{equation}
is the $L_1$ penalty term and $c > 0$ is penalty parameter. 

\begin{remark} \label{rmrk:PenaltyTerms}
One can replace the $L_1$ penalty term with the corresponding $L_{\infty}$ penalty term
\begin{align*}
  \varphi(x, u)_{\infty} 
  = &\esssup_{t \in [0, T]} \max_{i \in \{ 1, \ldots, n \}} \big| \dot{x}_i(t) - F_i(x(t), u(t), t) \big|
  + \max\{ 0, f_1(x(0), x(T), \ldots, f_{\ell_I}(x(0), x(T) \} 
  \\
  &+ \max\big\{ |f_{\ell_I + 1}(x(0), x(T))|, \ldots, |f_{\ell_E}(x(0), x(T))| \big\}
  \\
  &+ \esssup_{t \in [0, T]} \max\big\{ 0, \Xi_1(x(t), u(t), t), \ldots, \Xi_{\ell_M}(x(t), u(t), t) \big\}
\end{align*}
or any mixture of both, that is, one can choose which constraints are penalised via the $L_1$ penalty term and which
constraints are penalised via the $L_{\infty}$ penalty term (in particular, one can penalise a part of mixed
constraints with the use of $L_1$ penalty term and another part with the use of $L_{\infty}$ penalty term). Let us
underline that both the description of the exact penalty method for the problem $(\mathcal{P})$ and its convergence
analysis presented below are \textit{identical} for any such choice of the penalty term $\varphi(x, u)$. We opted to use
the $L_1$ penalty term for the sake of definiteness and simplicity.
\end{remark}

The so-called \textit{local exactness} of a penalty function plays one the central roles both in the theory of exact
penalty functions and analysis of exact penalty methods for constrained optimisation
(see
Refs.~\cite{HanMangasarian,DiPilloGrippo86,DiPilloGrippo89,Zaslavski,Dolgopolik_ExPen_I,Dolgopolik_ExPen_II,
Strekalovsky2019}). Recall that the penalty function $\Phi_c$ is called locally
exact at a locally optimal solution $(x_*, u_*)$ of the problem $(\mathcal{P})$, if there exists a penalty parameter
$c_* \ge 0$ such that for any $c \ge c_*$ the pair $(x_*, u_*)$ is a locally optimal solution of the penalised problem
\begin{equation} \label{eq:LocalExactnessDef}
  \minimise_{(x, u) \in X_0} \: \Phi_c(x, u).
\end{equation}
Any such $c_* > 0$ is called \textit{an exact penalty parameter} of the penalty function $\Phi_c$ at the point 
$(x_*, u_*)$.

Various necessary and/or sufficient conditions for the local exactness of penalty functions for optimal control
problems and some closely related results can be found in
Refs.\cite{Lasserre,Xing89,Xing94,DemyanovKarelin98,DemyanovKarelin2000_InCollect,DemyanovKarelin2000,DolgopolikFominyh,
Dolgopolik_OptControl,HammoudiBenharrat}. Typically, one needs to impose a suitable constraint
qualification at a given point to ensure that the corresponding penalty function for an optimal control problem under
consideration is locally exact. In other words, the assumption on local exactness of a penalty function can be viewed as
an implicit constraint qualification.

Under the assumption that the penalty function $\Phi_c$ is locally exact, we can obtain convenient local optimality
conditions for the problem $(\mathcal{P})$ that can be used as a foundation for a local search DCA-type method for
this problem.

\begin{proposition} \label{prp:OptCond}
Let $(x_*, u_*)$ be a locally optimal solution of the problem $(\mathcal{P})$ and the penalty function $\Phi_c$ be
locally exact at the point $(x_*, u_*)$. Then there exists $c_* \ge 0$ such that for any $c \ge c_*$ and for any
subgradients 
\begin{gather} \notag
  V_l(t) \in \partial_{x, u} H_l(x_*(t), u_*(t), t), \quad l \in \{ 0, 1, \ldots, n \}, \quad
  W_l(t) \in \partial_{x, u} G_l(x_*(t), u_*(t), t), \quad l \in \{ 1, \ldots, n \}, 
  \\ \label{eq:SubgradientCollection}
  v_i \in \partial h_i(x_*(0), x_*(T)), \quad i \in \{ 0, 1, \ldots, \ell_E \}, \quad
  w_j \in \partial g_j(x_*(0), x_*(T)), \quad j \in \{ \ell_I + 1, \ldots, \ell_E \}, 
  \\ \notag
  z_s(t) \in \partial_{x, u} q_s(x_*(t), u_*(t), t), \quad s \in \{ 1, \ldots, \ell_M \},
\end{gather}
(the maps $V_l(\cdot)$, $W_l(\cdot)$, and $z_s(\cdot)$ are assumed to be measurable) the pair $(x_*, u_*)$ is a globally
optimal solution of the convex variational problem
\begin{equation} \label{eq:CriticalProblem}
  \minimise_{(x, u) \in X_0} \enspace Q_c(x, u; x_*, u_*; V),
\end{equation}
where $V = (V_0(\cdot), V_1(\cdot), \ldots, V_n(\cdot), W_1(\cdot), \ldots, W_n(\cdot), v_0, v_1, \ldots, v_{\ell_E}, 
w_{\ell_I + 1}, \ldots, w_{\ell_E}, z_1(\cdot), \ldots z_{\ell_m}(\cdot))$, and
\begin{align} \notag
  Q_c&(x, u; x_*, u_*; V) 
  := \int_0^T \Big( G_0(x(t), u(t), t) - \langle V_0(t), (x(t), u(t)) \rangle \Big) \, dt 
  + g_0(x(0), x(T)) - \langle v_0, (x(0), x(T)) \rangle
  \\ \notag
  &+ c \sum_{i = 1}^n \int_0^T \max\Big\{ 
  \begin{aligned}[t]
    &\dot{x}^{(i)}(t) + H_i(x(t), u(t), t) - G_i(x_*(t), u_*(t), t) 
    - \langle W_i(t), (x(t) - x_*(t), u(t) - u_*(t)) \rangle,
    \\ 
    &- \dot{x}^{(i)}(t) + G_i(x(t), u(t), t) - H_i(x_*(t), u_*(t), t) 
    - \langle V_i(t), (x(t) - x_*(t), u(t) - u_*(t)) \rangle \Big\} \, dt
  \end{aligned}
  \\ \notag
  &+ c \sum_{i = 1}^{\ell_I}
  \max\big\{ 0, g_i(x(0), x(T)) - h_i(x_*(0), x_*(T)) - \langle v_i, (x(0) - x_*(0), x(T) - x_*(T)) \rangle \big\}
  \\ \notag
  &+ c \sum_{j = \ell_I + 1}^{\ell_E}
  \begin{aligned}[t]
    \max\big\{ &g_i(x(0), x(T)) - h_i(x_*(0), x_*(T)) - \langle v_i, (x(0) - x_*(0), x(T) - x_*(T)) \rangle,
    \\
    &h_i(x(0), x(T)) - g_i(x_*(0), x_*(T)) - \langle w_i, (x(0) - x_*(0), x(T) - x_*(T)) \rangle \big\}
  \end{aligned}
  \\ \label{eq:ExPenConvexMajorant}
  &+ c \sum_{s = 1}^{\ell_M} \int_0^T \max\Big\{ 0, p_s(x(t), u(t), t) - q_s(x_*(t), u_*(t), t) 
  - \langle z_s(t), (x(t) - x_*(t), u(t) - u_*(t)) \rangle \Big\} \, dt.
\end{align}
\end{proposition}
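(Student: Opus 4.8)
The plan is to read $Q_c(\,\cdot\,;x_*,u_*;V)$ as a \emph{convex majorant} of the exact penalty function $\Phi_c$ that coincides with $\Phi_c$ at the linearisation point $(x_*,u_*)$, and then to invoke the elementary principle that a local minimiser of a convex function over a convex set is automatically global. First I would set $c_*$ equal to the exact penalty parameter supplied by the local exactness hypothesis, so that for every $c\ge c_*$ the pair $(x_*,u_*)$ is a local minimiser of $\Phi_c$ over $X_0$ in the topology of $X$. I then fix an arbitrary $c\ge c_*$ and an arbitrary admissible measurable selection $V$ of subgradients as in~\eqref{eq:SubgradientCollection}, and establish two facts about $Q_c$.

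\emph{Convexity.} The map $(x,u)\mapsto Q_c(x,u;x_*,u_*;V)$ is convex on $X$: the integrand $G_0-\langle V_0,(\cdot)\rangle$ and the endpoint term $g_0-\langle v_0,(\cdot)\rangle$ are convex-minus-affine, and each bracketed maximum is a maximum of expressions that are convex in $(x,u,\dot x)$ (the functions $G_i,H_i,p_s,g_i,h_j$ are convex and the remaining terms affine), hence convex in $(x,u)\in X$ because $x\mapsto\dot x$ is linear; convexity survives maxima, sums, integration over $[0,T]$, and scaling by $c>0$. Assumption~\ref{assumpt:ContDiff} guarantees that every integral — in particular $\int_0^T\langle V_0(t),(x(t),u(t))\rangle\,dt$ — is well defined and finite.

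\emph{Majorant and touching.} I would prove that $Q_c(x,u;x_*,u_*;V)+K\ge\Phi_c(x,u)$ for all $(x,u)\in X$, with equality at $(x_*,u_*)$, for a constant $K$ independent of $(x,u)$. The only tool is the subgradient inequality: for convex $\psi$ and $\xi\in\partial\psi(z_*)$ one has $-\psi(z)\le-\psi(z_*)-\langle\xi,z-z_*\rangle$. Applied to the concave parts $-H_0,-h_0$ of the objective, it yields the affine minorants producing the first line of $Q_c$, the $(x,u)$-independent remainder being absorbed into $K$. Applied to the concave part inside each absolute value or positive part of $\varphi$, together with the monotonicity $\max\{a,b\}\le\max\{a',b'\}$ whenever $a\le a'$ and $b\le b'$, it shows that every penalty term of $\varphi$ is dominated by the matching maximum in~\eqref{eq:ExPenConvexMajorant} \emph{with no additional constant}; equality at $(x_*,u_*)$ holds because each subgradient term vanishes there.

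Combining the two facts, local optimality transfers: since $(x_*,u_*)$ locally minimises $\Phi_c$ and $Q_c+K\ge\Phi_c$ with equality at $(x_*,u_*)$, the pair $(x_*,u_*)$ is also a local minimiser of $Q_c$ over $X_0$. Convexity of $Q_c$ and of $X_0$ then forces global optimality: for any $(x,u)\in X_0$ the segment $(x_\lambda,u_\lambda)=(1-\lambda)(x_*,u_*)+\lambda(x,u)$ remains in $X_0$ and, for small $\lambda>0$, in the neighbourhood of local optimality, so that $Q_c(x_*,u_*)\le Q_c(x_\lambda,u_\lambda)\le(1-\lambda)Q_c(x_*,u_*)+\lambda Q_c(x,u)$, and cancelling yields $Q_c(x_*,u_*)\le Q_c(x,u)$, i.e.\ $(x_*,u_*)$ solves~\eqref{eq:CriticalProblem} globally. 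The main obstacle I anticipate is the bookkeeping in the majorant step — tracking that only the objective linearisations (through $H_0$ and $h_0$) feed the constant $K$ while all four families of penalty terms are absorbed losslessly into the $L_1$-maxima, and verifying the pointwise equality at $(x_*,u_*)$ in each case; the convexity and global-from-local arguments are then routine, provided one works along segments in the $X$-norm so that Assumption~\ref{assumpt:ContDiff} applies uniformly.
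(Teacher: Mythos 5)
Your proposal is correct and follows essentially the same route as the paper: take $c_*$ to be the exact penalty parameter, observe via the subgradient inequality that $Q_c(\cdot;x_*,u_*;V)$ is, up to an additive constant coming from the linearisation of $H_0$ and $h_0$, a convex global majorant of $\Phi_c$ that touches it at $(x_*,u_*)$, transfer local optimality, and upgrade to global optimality by convexity of $Q_c$ and of $X_0$. The paper's proof is just a terser version of the same argument (it states the majorant inequality \eqref{eq:GlobalConvexMajorant} and immediately concludes), so no further comment is needed.
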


\begin{proof}
Let $c_*$ be an exact penalty parameter of the penalty function $\Phi_c$ at $(x_*, u_*)$. Then for any $c \ge c_*$ 
the point $(x_*, u_*)$ is a locally optimal solution of problem \eqref{eq:LocalExactnessDef}. 

Observe that by the definition of subgradient for any $(x, u) \in X$ one has 
\begin{align} \notag
  Q_c(x, u; x_*, u_*; V) 
  &- \int_0^T \Big( H_0(x_*(t), u_*(t), t) - \langle V_0(t), (x_*(t), u_*(t)) \rangle \Big) \, dt
  - h_0(x_*(0), x_*(T)) + \langle v_0, (x_*(0), x_*(T)) \rangle  
  \\ \notag
  &\ge \int_0^T F_0(x(t), u(t), t) \, dt + f_0(x(0), x(T)) +   
  c \sum_{i = 1}^n \int_0^T |\dot{x}_i(t) - F_i(x(t), u(t), t)| \, dt
  \\ \notag
  &+ c \sum_{i = 1}^{\ell_I} \max\{ 0, f_i(x(0), x(T) \} + c \sum_{j = \ell_I + 1}^{\ell_E} |f_j(x(0), x(T))|
  + c \sum_{s = 1}^{\ell_M} \int_0^T \max\{ 0, \Xi_s(x(t), u(t), t) \} \, dt
  \\ \label{eq:GlobalConvexMajorant}
  &= \Phi_c(x, u)
\end{align}
and this inequality turns into equality for $(x, u) = (x_*, u_*)$. Therefore for any $c \ge c_*$ the pair $(x_*, u_*)$
is a locally optimal solution of problem \eqref{eq:CriticalProblem}, which thanks to the obvious convexity of 
the function $(x, u) \mapsto Q_c(x, u; x_*, u_*; V)$ and the fact that the set $X_0$ is by definition convex implies
that $(x_*, u_*)$ is globally optimal solution of problem \eqref{eq:CriticalProblem}.
\end{proof}

\begin{remark}
From the proof of the proposition above it follows that the function 
\[
  (x, u) \mapsto Q(x, u; x_*, u_*; V_*) - \int_0^T \Big( H_0(x_*(t), u_*(t), t) 
  - \langle V_0(t), (x_*(t), u_*(t)) \rangle \Big) \, dt - h_0(x_*(0), x_*(T)) + \langle v_0, (x_*(0), x_*(T))
\]
is a global convex majorant of the penalty function $\Phi_c$. Thus, the optimality conditions from the previous
propositions are expressed in terms of a family of global convex majorants $Q_c(\cdot; x_*, u_*, V_*)$ with $V_*$
running through the direct product of the corresponding subdifferentials. In turn, the optimality conditions state that
the point $(x_*, u_*)$ at which this family of majorants is constructed is a global minimiser of each of these majorants
on the convex set $X_0$.
\end{remark}

In many cases, the optimality conditions from the proposition above are too restrictive and cumbersome for applications,
since they require the computation of the entire subdifferentials of all corresponding convex functions and verification
of the optimality conditions \textit{for all} subgradients of these functions (that is, for \textit{all} convex
majorants $Q_c(\cdot; x_*, u_*, V)$), which is often either too computationally expensive or simply impossible. This
fact motivates us to introduce the well-known in DC optimisation 
(cf. Refs.~\cite{LeThiDinh2018,AckooijDeOliveira2019,JokiBagirov2020}) notion of \textit{criticality} for the problem
$(\mathcal{P})$. 

\begin{definition} \label{def:Criticality}
A feasible point $(x_*, u_*)$ of the problem $(\mathcal{P})$ is called \textit{critical} for this problem, if
\textit{there exists} a collection of subgradients $V$ of the corresponding convex functions at $(x_*, u_*)$, defined
in Eq.~\eqref{eq:SubgradientCollection}, and a penalty parameter $c_* > 0$ such that for any $c \ge c_*$ the pair 
$(x_*, u_*)$ is a globally optimal solution of the convex problem \eqref{eq:CriticalProblem}. 
A pair $(x_*, u_*) \in X_0$ is called \textit{a generalized critical point} for the problem $(\mathcal{P})$ for a given
value $c > 0$ of the penalty parameter, if there exist a collection of subgradients $V$ of the corresponding convex
functions, defined in Eq.~\eqref{eq:SubgradientCollection}, such that the pair $(x_*, u_*)$ is a globally optimal
solution of the convex problem \eqref{eq:CriticalProblem}.
\end{definition}

Clearly, any feasible point $(x_*, u_*)$ satisfying optimality conditions from Proposition~\ref{prp:OptCond} is critical
for the problem $(\mathcal{P})$; however, the converse statement is not true in the general nonsmooth case. The
criticality of $(x_*, u_*)$ is equivalent to the validity of optimality conditions from Proposition~\ref{prp:OptCond},
for example, if all functions $H_0$, $G$, $H$, $g_i$, $i \in \mathcal{E}$, $h_j$, $j \in \mathcal{I} \cup \mathcal{E}$,
and $q_s$, $s \in \mathcal{M}$, are smooth. For a more detailed discussion of interrelations between various optimality
conditions and criticality notions for DC optimisation problems see
Refs.\cite{LeThiDinh2018,AckooijDeOliveira2019,JokiBagirov2020}.

Generalized criticality extends the notion of criticality to the case of \textit{infeasible} points. Note, however, that
the generalized criticality of a pair $(x_*, u_*)$ depends on the choice of penalty parameter $c$. A pair $(x_*, u_*)$
might be a generalized critical point of the problem $(\mathcal{P})$ for some value $c$ of the penalty parameter, and
not be a generalized critical point for another value $c'$ of the penalty parameter, even if $c' \ge c$.

\begin{remark} \label{rmrk:ConvexProblem}
One can readily see that the criticality of a pair $(x_*, u_*)$ implies (and under some additional assumptions is
equivalent to) its global optimality in the convex case, that is, in the case when the mapping $F$ is affine in 
$(x, u)$, $F_0$ and $\Xi_s$, $s \in \mathcal{M}$, are convex in $(x, u)$, $f_i$, $i \in \mathcal{I} \cup \{ 0 \}$ are
convex, and $f_j$, $j \in \mathcal{J}$, are affine. In the nonconvex case, the criticality of $(x_*, u_*)$ can, roughly
speaking, be viewed as optimality of this pair for the convex problem  
\begin{align*}
  &\minimise_{(x, u) \in X_0} \int_0^T \Big( G_0(x(t), u(t), t) - \langle V_0(t), (x(t), u(t)) \Big) \, dt 
  + g_0(x(0), x(T)) - \langle v_0, (x(0), x(T)) \rangle, 
  \\
  &\text{s.t. } 
  \begin{aligned}[t]
    &\dot{x}(t) \in \mathcal{F}(x(t), u(t), x_*(t), u_*(t), t) \quad \text{for a.e. } t \in [0, T]
    \\
    & g_i(x(0), x(T)) - h_i(x_*(0), x_*(T)) - \langle v_i, (x(0) - x_*(0), x(T) - x_*(T)) \rangle \le 0, \quad 
    i \in \mathcal{I}, 
    \\
    &g_j(x(0), x(T)) - h_j(x_*(0), x_*(T)) - \langle v_j, (x(0) - x_*(0), x(T) - x_*(T)) \rangle \le 0, \quad
    j \in \mathcal{E},
    \\
    &h_j(x(0), x(T)) - g_j(x_*(0), x_*(T)) - \langle w_j, (x(0) - x_*(0), x(T) - x_*(T)) \rangle \le 0, \quad 
    j \in \mathcal{E},
    \\
    & p_s(x(t), u(t), t) - q_s(x_*(t), u_*(t), t) - \langle z_s(t), (x(t) - x_*(t), u(t) - u_*(t)) \rangle \le 0 
    \quad \text{for a.e. } t \in [0, T], \quad k \in \mathcal{M},
  \end{aligned}
\end{align*}
where
\begin{align*}
  \mathcal{F}_i(x(t), u(t), x_*(t), u_*(t), t) = \co\Big\{ 
  &G_i(x(t), u(t), t) - H_i(x_*(t), u_*(t), t) - \langle V_i(t), (x(t) - x_*(t), u(t) - u_*(t)) \rangle,
  \\
  &G_i(x_*(t), u_*(t), t) + \langle W_i(t), (x(t) - x_*(t), u(t) - u_*(t)) \rangle - H_i(x(t), u(t), t)
  \Big\}
\end{align*}
and ``$\co$'' stands for the convex hull. Note that 
$F(x(\cdot), u(\cdot), \cdot) \in \mathcal{F}(x(\cdot), u(\cdot), x_*(\cdot), u_*(\cdot), \cdot)$
for any $(x, u) \in X$ and $(x_*, u_*) \in X$, and the multifunction 
$(x, u) \mapsto \mathcal{F}(x, u, x_*(\cdot), u_*(\cdot), \cdot)$ is convex in the sense that
\[
  \mathcal{F}(\alpha x_1 + (1 - \alpha) x_2, \alpha u_1 + (1 - \alpha) u_2, x_*(\cdot), u_*(\cdot), \cdot)
  \subseteq \alpha \mathcal{F}(x_1, u_1, x_*(\cdot), u_*(\cdot), \cdot)
  + (1 - \alpha) \mathcal{F}(x_2, u_2, x_*(\cdot), u_*(\cdot), \cdot)
\]
for any $\alpha \in [0, 1]$, $x_1, x_2 \in \mathbb{R}^n$, and $u_1, u_2 \in \mathbb{R}^m$. Thus, the differential
inclusion $\dot{x}(t) \in \mathcal{F}(x(t), u(t), x_*(t), u_*(t), t)$ is a \textit{convex relaxation} of the
corresponding differential equation $\dot{x}(t) = F(x(t), u(t), t)$ at any given point $(x_*, u_*) \in X$.

The convex problem above is obtained from the original nonconvex problem $(\mathcal{P})$ by convexifing it
with the use of the DC decompositions of the cost function and all constraints, and the function 
$Q_c(\cdot; x_*, u_*; V)$ is a penalty function for this convex problem. Hence, in particular, under some additional
assumptions one can reformulate the criticality condition as the Pontryagin Maximum Principle for the convex problem
above.
\end{remark}

In the context of numerical methods (and from the practical point of view), one cannot deal with optimal solutions of
auxiliary subproblems directly, since only approximate solutions of such subproblems can be constructed numerically.
Therefore, to bridge the gap between a description and a theoretical analysis of a numerical method on the one hand, and
its practical implementation on the other hand, it is natural to replace optimality with approximate optimality 
($\varepsilon$-optimality) of corresponding solutions. Such replacement, motivated by the issues of practical
implementation (in particular, motivated by effects of discretisation), leads us the notion of \textit{approximate
criticality} of feasible points of the problem $(\mathcal{P})$ (cf. a similar notion of approximate optimality for
inequality constrained finite dimensional DC optimisation problem in Ref.\cite{AckooijDeOliveira2019}).

\begin{definition} \label{def:EpsilonCriticality}
A feasible point $(x_*, u_*)$ of the problem $(\mathcal{P})$ is called $\varepsilon$-\textit{critical} for this
problem for some $\varepsilon \ge 0$, if there exist a collection of subgradients $V$ of the corresponding convex
functions at $(x_*, u_*)$, defined in Eq.~\eqref{eq:SubgradientCollection}, and a penalty parameter $c_* > 0$ such that
for any $c \ge c_*$ the pair $(x_*, u_*)$ is an $\varepsilon$-optimal solution of the convex problem
\eqref{eq:CriticalProblem}, that is,
\[
  Q_c(x_*, u_*; x_*, u_*; V) \le \inf_{(x, u) \in X_0} Q(x, u; x_*, u_*; V) + \varepsilon.
\] 
A pair $(x_*, u_*) \in X_0$ is called a \textit{generalized $\varepsilon$-critical point} for the problem 
$(\mathcal{P})$ for a given value $c > 0$ of the penalty parameter and some $\varepsilon > 0$, if there exist a
collection of subgradients $V$ of the corresponding convex functions, defined in Eq.~\eqref{eq:SubgradientCollection},
such that the pair $(x_*, u_*)$ is a an $\varepsilon$-optimal solution of the convex problem \eqref{eq:CriticalProblem}.
\end{definition}

Clearly, a (generalised) $\varepsilon$-critical point of the problem $(\mathcal{P})$ with $\varepsilon = 0$ is a 
(generalised) critical point of this problem. Moreover, any \textit{feasible} generalised $\varepsilon$-critical point
of the problem $(\mathcal{P})$ with $\varepsilon \ge 0$ is an $\varepsilon$-critical point of this problem.

\section{Boosted steering exact penalty DCA for nonsmooth optimal control problems}
\label{sect:MethodDescription}

This section is devoted to a detailed description of a class of local search exact penalty DCA-type methods for 
the problem $(\mathcal{P})$. This class of methods can be viewed as an extension of the steering exact penalty
DCA\cite{Strekalovsky2020,Dolgopolik_StExPenDCA} to the case of optimal control problems combined with the boosted
DCA\cite{AragonArtachoFlemingVuong,AragonArtachoVuong,FerreiraSantosSouza} to improve its overall
performance.

\subsection{A description of the method}

The method is motivated by optimality conditions from Proposition~\ref{prp:OptCond} and in its core is based on
consecutively solving convex variational problems of the form
\begin{equation} \label{prob:MainStep}
  \minimise_{(x, u) \in X_0} Q_{c_k}(x, u; x_k, u_k; V_k),
\end{equation}
where the function $Q_c$ is defined as in Eq.~\eqref{eq:ExPenConvexMajorant}, $(x_k, u_k)$ is the current iterate, $c_k$
is the current value of the penalty parameter, while 
\begin{equation} \label{eq:CollectedSubgradients}
  V_k = (V_{k0}(\cdot), V_{k1}(\cdot), \ldots, V_{kn}(\cdot), W_{k1}(\cdot), \ldots, W_{kn}(\cdot), 
  v_{k0}, v_{k1}, \ldots, v_{k \ell_E}, w_{k(\ell_I + 1)}, \ldots, w_{k\ell_E}, z_{k1}(\cdot), \ldots 
  z_{k \ell_m}(\cdot))
\end{equation}
with
\begin{gather} \notag
  V_{kl}(t) \in \partial_{x, u} H_l(x_k(t), u_k(t), t), \quad l \in \{ 0, 1, \ldots, n \}, \quad
  W_{kl}(t) \in \partial_{x, u} G_l(x_k(t), u_k(t), t), \quad l \in \{ 1, \ldots, n \}, 
  \\ \label{eq:Subgradients}
  v_{ki} \in \partial h_i(x_k(0), x_k(T)), \quad i \in \{ 0, 1, \ldots, \ell_E \}, \quad
  w_{kj} \in \partial g_j(x_k(0), x_k(T)), \quad j \in \{ \ell_I + 1, \ldots, \ell_E \}, 
  \\ \notag
  z_{ks}(t) \in \partial_{x, u} q_s(x_k(t), u_k(t), t), \quad s \in \{ 1, \ldots, \ell_M \}
\end{gather}
for a.e. $t \in [0, T]$ and the maps $V_{kl}(\cdot)$, $W_{kl}(\cdot)$, and $z_{ks}(\cdot)$ being measurable. Roughly
speaking, this method can be viewed as the boosted
DCA\cite{AragonArtachoFlemingVuong,AragonArtachoVuong,FerreiraSantosSouza} applied to the penalised problem
\eqref{eq:LocalExactnessDef}, in which one adjusts the penalty parameter $c$ between iterations.

Our rules for updating the penalty parameter $c_k$ largely follow the steering exact penalty methodology originally
developed by Byrd et al.\cite{ByrdNocedalWaltz,ByrdLopezCalvaNocedal} for sequential linear and quadratic programming
methods and further modified to the case of nonsmooth constrained DC optimisation problems in
Refs.\cite{Strekalovsky2020,Dolgopolik_StExPenDCA}. To describe these rules, note that $Q_{c_k}(\cdot, x_k, u_k; V_k)$
is the $L_1$ penalty function for the convex variational problem defined in Remark~\ref{rmrk:ConvexProblem} with 
$(x_*, u_*) = (x_k, v_k)$ and $V = V_k$. If the feasible region of this problem is nonempty, then we would like the
penalty parameter $c_k$ to be large enough to ensure that an optimal solution $(x_k[c_k], u_k[c_k])$ of the convex
problem \eqref{prob:MainStep} is ``almost'' feasible for this problem. 

If the feasible region of this convex problem is empty, then to determine an appropriate value of the penalty parameter
$c_k$, first, one needs to determine the optimal value of infeasibility for this problem by solving the auxiliary convex
optimisation problem
\begin{equation} \label{prob:OptimalFeasibility}
  \minimise_{(x, u) \in X_0} \enspace \Gamma(x, u; x_k, u_k; V_k),
\end{equation}
where
\begin{align} \notag
  \Gamma&(x, u; x_k, u_k; V_k) 
  \\ \notag
  &:=
  \sum_{i = 1}^n \int_0^T \max\Big\{ 
  \begin{aligned}[t]
    &\dot{x}^{(i)}(t) + H_i(x(t), u(t), t) - G_i(x_k(t), u_k(t), t) 
    - \langle W_{ki}(t), (x(t) - x_k(t), u(t) - u_k(t)) \rangle,
    \\
    &- \dot{x}^{(i)}(t) + G_i(x(t), u(t), t) - H_i(x_k(t), u_k(t), t) 
    - \langle V_{ki}(t), (x(t) - x_k(t), u(t) - u_k(t)) \rangle \Big\} \, dt
  \end{aligned}
  \\ \notag
  &+ \sum_{i = 1}^{\ell_I} 
  \max\big\{ 0, g_i(x(0), x(T)) - h_i(x_k(0), x_k(T)) - \langle v_{ki}, (x(0) - x_k(0), x(T) - x_k(T)) \rangle \big\}
  \\ \notag
  &+ \sum_{j = \ell_I + 1}^{\ell_E}
  \begin{aligned}[t]
    \max\big\{ &g_i(x(0), x(T)) - h_i(x_k(0), x_k(T)) - \langle v_{ki}, (x(0) - x_k(0), x(T) - x_k(T)) \rangle,
    \\
    &h_i(x(0), x(T)) - g_i(x_k(0), x_k(T)) - \langle w_{ki}, (x(0) - x_k(0), x(T) - x_k(T)) \rangle \big\}
  \end{aligned}
  \\ \label{def:Gamma}
  &+ \sum_{s = 1}^{\ell_M} \int_0^T \max\Big\{ 0, 
  p_s(x(t), u(t), t) - q_s(x_k(t), u_k(t), t) - \langle z_{ks}(t), (x(t) - x_k(t), u(t) - u_k(t)) \rangle \Big\} \, dt
\end{align}
is an infeasibility measure for the convex problem from Remark~\ref{rmrk:ConvexProblem}. Note also that 
$\Gamma(\cdot; x_k, u_k; V_k)$ is a convex majorant of the penalty term $\varphi$ (see~Eq.~\eqref{eq:PenTerm}), that
is, 
\begin{equation} \label{eq:PenTermConvexMajorant}
  \Gamma(x, u; x_k, u_k; V_k) \ge \varphi(x, u) \quad \forall (x, u) \in X, \quad
  \Gamma(x_k, u_k; x_k, u_k; V_k) = \varphi(x_k, u_k).
\end{equation}
This fact can be verified by applying the inequality from the definition of subgradient to the right-hand side of
equality \eqref{def:Gamma}.

Let $(\widehat{x}_k, \widehat{u}_k)$ be an optimal solution of the optimal feasibility problem
\eqref{prob:OptimalFeasibility} (recall that this problem is convex). 
Then the difference $\Gamma(x_k, u_k; x_k, u_k; V_k) - \Gamma(\widehat{x}_k, \widehat{u}_k; x_k, u_k; V_k)$ represents
the optimal (i.e. maximum possible) decrease of the infeasibility measure at the current iteration. We would like the
penalty parameter $c_k$ to be large enough to ensure that the actual decrease of the infeasibility measure at the
current iteration is proportional to the optimal one, that is,
\[
  \Gamma(x_k[c_k], u_k[c_k]; x_k, u_k; V_k) - \Gamma(x_k, u_k; x_k, u_k; V_k)
  \le \eta_1 \Big( \Gamma(\widehat{x}_k, \widehat{u}_k; x_k, u_k; V_k) - \Gamma(x_k, u_k; x_k, u_k; V_k) \Big)
\] 
for some $\eta_1 \in (0, 1)$. Note, however, that in the case 
$\Gamma(x_k, u_k; x_k, u_k; V_k) = \Gamma(\widehat{x}_k, \widehat{u}_k; x_k, u_k; V_k) > 0$ 
(in this case the constraints are, in some sense, degenerate at the point $(x_k, u_k)$; see
Def.~\ref{def:PenTermCriticality} below) there might not exist finite $c_k > 0$ satisfying the inequality above.
Therefore, one must take into account the case 
$\Gamma(x_k, u_k; x_k, u_k; V_k) = \Gamma(\widehat{x}_k, \widehat{u}_k; x_k, u_k; V_k) > 0$ separately, and determine
$c_k$ in this case in a way that would ensure that the infeasibility measure $\Gamma(x_k[c_k], u_k[c_k]; x_k, u_k; V_k)$
is sufficiently close to the optimal value of the infeasibility measure 
$\Gamma(\widehat{x}_k, \widehat{u}_k; x_k, u_k; V_k)$.

Finally, following the steering exact penalty rules\cite{ByrdNocedalWaltz,ByrdLopezCalvaNocedal,Dolgopolik_StExPenDCA},
we would like to ensure balanced progress towards \textit{both} feasibility and optimality by requiring that 
the improvement of the value of the penalty function $Q_{c_k}(\cdot; x_k, u_k; V_k)$ is large, when the improvement of
the infeasibility measure is large. More precisely, we would like to ensure that the corresponding improvements are
proportional, that is,
\[
  Q_{c_k}(x_k[c_k], u_k[c_k]; x_k, u_k; V_k) - Q_{c_k}(x_k, u_k; x_k, u_k; V_k)
  \le c_k \eta_2 \Big( \Gamma(x_k[c_k], u_k[c_k]; x_k, u_k; V_k) - \Gamma(x_k, u_k; x_k, u_k; V_k) \Big)
\]
for some $\eta_2 \in (0, 1)$.

After a value $c_k$ of the penalty parameter, satisfying the conditions discussed above, is computed along with the
corresponding point $(x_k[c_k], u_k[c_k])$, we employ a nonmonotone line search along the direction 
$(x_k[c_k] - x_k, u_k[c_k] - u_k)$ as in the boosted DCA for nonsmooth DC functions\cite{FerreiraSantosSouza} to further
improve the current point $(x_k[c_k], u_k[c_k])$ and accelerate convergence of the sequence constructed by the method.
The usage of \textit{nonmonotone} line search is dictated by the fact that the direction 
$(x_k[c_k] - x_k, u_k[c_k] - u_k)$ might not be a descent direction of the penalty function $\Phi_c$, since this penalty
function is always nonsmooth, even if the original problem is smooth (see 
Refs.\cite{AragonArtachoVuong,FerreiraSantosSouza}).

Let us note that in practice convex subproblems \eqref{prob:MainStep} and \eqref{prob:OptimalFeasibility} can be solved
only approximately due to the finite precision of computations, effects of discretisation, etc. Therefore, for the
theoretical scheme of the method to closer resemble its practical implementation, we will assume that 
$(x_k[c_k], u_k[c_k])$ and $(\widehat{x}_k, \widehat{u}_k)$ are not optimal, but $\varepsilon_k$-optimal solutions 
of the corresponding convex problems for some $\varepsilon_k > 0$. This assumption has an additional benefit of
resolving the problem of the \textit{existence} of corresponding optimal solutions.  

Thus, we arrive at the following scheme of the boosted steering exact penalty DCA (B-STEP-DCA) for the optimal control
problem $(\mathcal{P})$ given in Algorithmic Pattern~\ref{alg:Boosted_SEP_DCA}. Following van Ackooij and 
de Oliveira\cite{AckooijDeOliveira2019}, we use the term \textit{algorithmic pattern}, since
B-STEP-DCA is not a local search method the problem $(\mathcal{P})$ per se, but rather a pattern
that can be used to define a whole family of local search methods for this problem. By specifying methods 
(in particular, discretisation schemes) for solving auxiliary convex subproblems, rules for increasing the penalty
parameter $c_+$ and $c_{k + 1}$ on Steps~2-4, a rule for choosing line search tolerances $\nu_k$, a rule for choosing
trial step size $\overline{\alpha}_k > 0$, and, finally, stopping criteria, one can define a particular local search
method for the problem $(\mathcal{P})$ that follows the pattern described by the B-STEP-DCA.

\begin{algorithm}
\caption{Boosted steering exact penalty DCA (B-STEP-DCA) for nonsmooth optimal control problems}

\textbf{Initialization.} Choose an initial guess $(x_0, u_0) \in X_0$ and an initial value of the penalty parameter
$c_0 > 0$. Choose parameters $\eta_1, \eta_2, \zeta, \sigma \in (0, 1)$, infeasibility tolerances 
$\varepsilon_{\varphi}, \varepsilon_{feas} > 0$, and optimality tolerances $\{ \varepsilon_k \} \subset (0, + \infty)$.
Set $k = 0$.

\textbf{Step 1.} Put $c_+ = c_k$. For all $l \in \{ 0, 1, \ldots, n \}$, 
$i \in \mathcal{I} \cup \mathcal{E} \cup \{ 0 \}$, $j \in \mathcal{J}$, $s \in \mathcal{M}$, compute subgradients
\eqref{eq:Subgradients} and denote by $V_k$ the vector from Eq.~\eqref{eq:CollectedSubgradients}. Compute an
$\varepsilon_k$-optimal solution $(x_k[c_+], u_k[c_+])$ of the convex penalised subproblem
\begin{equation} \label{subprob:MainStep}
  \minimise_{(x, u) \in X_0} \enspace Q_c(x, u; x_k, u_k; V_k)
\end{equation}
with $c = c_+$. If $\Gamma(x_k[c_+], u_k[c_+]; x_k, u_k; V_k) \le \varepsilon_{\varphi}$ (that is, 
$(x_k[c_+], u_k[c_+])$ is approximately feasible for the corresponding convexified problem and the problem
$(\mathcal{P})$), go to \textbf{Step 4}. Otherwise, go to \textbf{Step 2}.

\textbf{Step 2.} Compute an $\varepsilon_k$-optimal solution $(\widehat{x}_k, \widehat{u}_k)$ of the optimal
infeasibility subproblem
\begin{equation} \label{subprob:OptInfeasMeas}
  \minimise_{(x, u) \in X_0} \enspace \Gamma(x, u; x_k, u_k; V_k).
\end{equation}
If $\Gamma(\widehat{x}_k, \widehat{u}_k; x_k, u_k; V_k) < \Gamma(x_k, u_k; x_k, u_k; V_k)$, go to \textbf{Step 3}.
Otherwise, $(x_k, u_k)$ is an $\varepsilon_k$-critical point of the penalty term $\varphi$. \textbf{While} the
inequality \begin{equation} \label{eq:NearOptimalInfeasMeas}
  \Gamma(x_k[c_+], u_k[c_+]; x_k, u_k; V_k) \le \Gamma(x_k, u_k; x_k, u_k; V_k) + \varepsilon_{feas}
\end{equation}
is \textbf{not} satisfied, increase $c_+$ and compute an $\varepsilon_k$-optimal solution $(x_k[c_+], u_k[c_+])$ of
problem \eqref{subprob:MainStep} with $c = c_+$. Once inequality \eqref{eq:NearOptimalInfeasMeas} is satisfied, go to
Step~4.

\textbf{Step 3.} \textbf{While} inequality
\begin{equation} \label{eq:InfeasMeasDecay}
  \Gamma(x_k[c_+], u_k[c_+]; x_k, u_k; V_k) - \Gamma(x_k, u_k; x_k, u_k; V_k)
  \le \eta_1 \Big( \Gamma(\widehat{x}_k, \widehat{u}_k; x_k, u_k; V_k) - \Gamma(x_k, u_k; x_k, u_k; V_k) \Big)
\end{equation}
is \textbf{not} satisfied, increase $c_+$ and compute an $\varepsilon_k$-optimal solution $(x_k[c_+], u_k[c_+])$ of
problem \eqref{subprob:MainStep} with $c = c_+$. Once inequality \eqref{eq:InfeasMeasDecay} is satisfied, go to
\textbf{Step~4}.

\textbf{Step 4.} Put $c_{k + 1} = c_+$. \textbf{While} inequality
\begin{multline} \label{eq:PenFuncDecay}
  Q_{c_{k + 1}}(x_k[c_{k + 1}], u_k[c_{k + 1}]; x_k, u_k; V_k) - Q_{c_{k + 1}}(x_k, u_k; x_k, u_k; V_k)
  \\
  \le c_{k + 1} \eta_2 
  \Big( \Gamma(x_k[c_{k + 1}], u_k[c_{k + 1}]; x_k, u_k; V_k) - \Gamma(x_k, u_k; x_k, u_k; V_k) \Big)
\end{multline}
is \textbf{not} satisfied, increase $c_{k + 1}$ and compute an $\varepsilon_k$-optimal solution 
$(x_k[c_{k + 1}], u_k[c_{k + 1}])$ of problem \eqref{subprob:MainStep} with $c = c_{k + 1}$. Once inequality
\eqref{eq:PenFuncDecay} is satisfied, go to Step~5.

\textbf{Step 5.} Choose line search tolerance $\nu_k > 0$ and trial step size $\overline{\alpha}_k \ge 0$. Find minimal
$j_k \in \mathbb{N}$ satisfying the inequality
\begin{multline} \label{eq:LineSearchIneq}
  \Phi_{c_{k + 1}}\Big( x_k[c_{k + 1}] + \zeta^{j_k} \overline{\alpha_k} \big( x_k[c_{k + 1}] - x_k \big), 
  u_k[c_{k + 1}] + \zeta^{j_k} \overline{\alpha_k} \big( u_k[c_{k + 1}] - u_k \big) \Big) 
  - \Phi_{c_{k + 1}}(x_k[c_{k + 1}], u_k[c_{k + 1}]) 
  \\
  \le - \sigma \big( \zeta^{j_k} \overline{\alpha_k} \big)^2 
  \| (x_k[c_{k + 1}] - x_k, u_k[c_{k + 1}] - u_k) \|_2^2 + \nu_k
\end{multline}
and define $\alpha_k = \zeta^{j_k} \overline{\alpha}_k$. Set 
$(x_{k + 1}, u_{k + 1}) = (x_k[c_{k + 1}], u_k[c_{k + 1}]) + \alpha_k (x_k[c_{k + 1}] - x_k, u_k[c_{k + 1}] - u_k)$, 
and check a \textbf{stopping criterion} for $(x_{k + 1}, u_{k + 1})$. If it is satisfied, \textbf{Stop}. Otherwise,
put $k \leftarrow k + 1$, and go to \textbf{Step~1}. \label{alg:Boosted_SEP_DCA}.
\end{algorithm}

Let us briefly discuss the way B-STEP-DCA works. On each iteration of this algorithmic pattern one solves the auxiliary
penalised convex subproblem \eqref{subprob:MainStep}. If a computed $\varepsilon_k$-optimal solution of this problem
satisfies the corresponding constraints, then the method jumps to Step~4 and checks inequality
\eqref{eq:InfeasMeasDecay}, ensuring sufficient decay of the penalty function. If the inequality is satisfied, then the
current value of the penalty parameter $c_k$ is adequate, the method performs nonmonotone line search on Step~5 and
moves to the next iteration.

If an $\varepsilon_k$-optimal solution computed on Step~1 is infeasible, then one solves auxiliary convex subproblem
\eqref{subprob:OptInfeasMeas} to determine the optimal level of infeasibility on the current iteration and then,
potentially, solves subproblem \eqref{subprob:MainStep} multiple times for increasing values of the penalty parameter
to find a value that would guarantee sufficient decrease of both infeasibility measure and the value of the penalty
function. After an adequate value of the penalty parameter is found, the method performs nonmonotone line search on
Step~5 and moves to the next iteration.

Thus, on each iteration of Algorithmic Pattern~\ref{alg:Boosted_SEP_DCA} one has to potentially solve several convex
optimisation subproblems to ensure the validity of all corresponding conditions. As was clearly demonstrated by multiple
numerical experiments in Refs.\cite{ByrdNocedalWaltz,ByrdLopezCalvaNocedal,Dolgopolik_StExPenDCA}, despite seeming
computationally expensive and inefficient, the use of such strategy completely pays off in the end, since it leads to
accelerated convergence and improved robustness of the corresponding exact penalty methods.

For the sake of convenience, below we impose the following \textit{nonrestrictive} assumption on the point 
$(x_k[c_+], u_k[c_+])$ and $(x_k[c_{k + 1}], u_k[c_{k + 1}])$ {subprob:OptInfeasMeas}computed on Steps~1--4 of
Algorithmic
Pattern~\ref{alg:Boosted_SEP_DCA} that simplifies convergence analysis and allows one to strengthen some theoretical
results.

\begin{assumption} \label{assumpt:SubOptSolCorrectness}
For any $k \in \mathbb{N}$ the point $(x_k[c_+], u_k[c_+])$ on Steps~1--3 of Algorithmic
Pattern~\ref{alg:Boosted_SEP_DCA} satisfies the inequality
\begin{equation} \label{eq:SubOptSolCorrectness}
  Q_c(x_k[c], u_k[c]; x_k, u_k; V_k) \le Q_c(x_k, u_k; x_k, u_k; V_k)
\end{equation}
with $c = c_+$, while the point $(x_k[c_{k + 1}], u_k[c_{k + 1}])$ on Step~4 of 
Algorithmic Pattern~\ref{alg:Boosted_SEP_DCA} satisfies inequality \eqref{eq:SubOptSolCorrectness} with 
$c = c_{k + 1}$.
\end{assumption}

Let us underline that this assumption is not restrictive. If a convex optimisation method that one uses to solve
subproblem~\eqref{subprob:MainStep} fails to improve the current value $Q_{c_+}(x_k, u_k; x_k, u_k; V_k)$ of the convex
majorant $Q_{c_+}(\cdot; x_k, u_k; V_k)$ of the penalty function $\Phi_{c_+}(\cdot)$ for the problem $(\mathcal{P})$,
then by definitions the point $(x_k, u_k)$ is a $\varepsilon_k$-optimal solution of this problem and, therefore, an
$\varepsilon_k$-critical point of the problem $(\mathcal{P})$. In this case it is natural to terminate the algorithmic
pattern, since it has found an approximately critical problem of the problem under consideration. Thus, after solving
subproblem~\eqref{subprob:MainStep} on Steps~1--4 of Algorithmic Pattern~\ref{alg:Boosted_SEP_DCA} one should
check inequality \eqref{eq:SubOptSolCorrectness}. If it is no satisfied, then the method must terminate and return
the point $(x_k, u_k)$.

We also impose the following assumption on the trial step size $\overline{\alpha}_k \ge 0$ on Step~5 of Algorithmic
Pattern~\ref{alg:Boosted_SEP_DCA} that allows one to correctly unify several different versions of B-STEP-DCA for 
the problem $(\mathcal{P})$ into one theoretical scheme.

\begin{assumption} \label{assumpt:TrialStepSize}
The trial step size $\overline{\alpha}_k \ge 0$ on Step~5 of Algorithmic Pattern~\ref{alg:Boosted_SEP_DCA} satisfies
the condition
\[
  \Big( x_k[c_{k + 1}] + \zeta^j \overline{\alpha_k} \big( x_k[c_{k + 1}] - x_k \big), 
  u_k[c_{k + 1}] + \zeta^j \overline{\alpha_k} \big( u_k[c_{k + 1}] - u_k \Big) \in X_0
  \quad \forall j \in \mathbb{N},
\]
which implies that $(x_{k + 1}, u_{k + 1}) \in X_0$.
\end{assumption}

If $X_0 = X$ or $X_0$ is an affine subspace of $X$, then any $\overline{\alpha}_k \ge 0$ satisfies
Assumption~\ref{assumpt:TrialStepSize}. However, if $X_0$ is a non-affine convex set (that might have a complicated
structure), then the only safe choice is $\overline{\alpha}_k = 0$. Such choice means that Algorithmic
Pattern~\ref{alg:Boosted_SEP_DCA} does not perform line search (Step~5), and in this case it is natural to call the
corresponding method simply STEP-DCA (cf.~Refs.\cite{Strekalovsky2020,Dolgopolik_StExPenDCA}).
Assumption~\ref{assumpt:TrialStepSize} guarantees that one can simultaneously consider Algorithmic
Pattern~\ref{alg:Boosted_SEP_DCA} with and without line search and present a unified convergence analysis for both
versions of the method.

\begin{remark} \label{rmrk:BSEP_DCA_DifferentVersions}
{(i)~The rather general formulations of the problem $(\mathcal{P})$ and Algorithmic Pattern~\ref{alg:Boosted_SEP_DCA}
(see also Remark~\ref{rmrk:PenaltyTerms}) allow one to flexibly adjust B-STEP-DCA to any given optimal control problem
to improve its overall performance. In particular, they allow one to choose whether to use line search or not. For
example, if the end-points are fixed, that is, $x(0) = x^0$ and $x(T) = x^T$ for some given $x^0, x^T \in \mathbb{R}^n$,
then it is natural to not include the end-point constraints into the definition of the problem $(\mathcal{P})$
explicitly and instead put $X_0 = \{ (x, u) \in X \mid x(0) = x^0, \: x(T) = x^T \}$. Similarly, if the system's
dynamics is linear and has the form $\dot{x}(t) = A(t) x(t) + B(t) u(t)$, then one can define
\[
  X_0 = \Big\{ (x, u) \in X \Bigm| \dot{x}(t) = A(t) x(t) + B(t) u(t) \text{ for a.e. } t \in [0, T] \Big\}
\]
and not include the corresponding terms into the penalty functions $\Phi_c$ and $Q_c$ (for such choice of $X_0$ these
terms are identically zero on $X_0$). In this case, the convex subproblems \eqref{subprob:MainStep} and
\eqref{subprob:OptInfeasMeas} become \textit{convex} optimal control problems that can be solved with the use of
standard methods. Note that for both choices of $X_0$ mentioned above Assumption~\ref{assumpt:TrialStepSize} is
satisfied for any $\overline{\alpha}_k \ge 0$ and one can safely perform line search on Step~5 of Algorithmic
Pattern~\ref{alg:Boosted_SEP_DCA}.

In the case when some of the end-point constraints or state constraints are convex, one can choose to either (a) include
them into the set $X_0$ and avoid line search or (b) include them into the formulation of the problem $(\mathcal{P})$
explicitly to allow the method to perform line search in order to potentially accelerate its convergence (see numerical
examples in Section~\ref{sect:NumericalExperiments}).
}

\noindent{(ii)~Let us note that Algorithmic Pattern~\ref{alg:Boosted_SEP_DCA} allows a straighforward extension to the
case of optimal control problem with isoperimetric equality and inequality constraints of the form
\[
  \int_0^T \Psi_i(x(t), u(t), t) \, dt + f_i(x(0), x(T)) \le 0, \quad
  \int_0^T \Psi_j(x(t), u(t), t) \, dt + f_j(x(0), x(T)) = 0.
\]
One simply needs to add the corresponding terms to the penalty functions $\Phi_c$ and $Q_c$, and the infeasibility
measure $\Gamma$. Although such constraints can be easily converted into equivalent end-points constraints by
introducing additional variables $\dot{y}_i(t) = \Psi_i(x(t), u(t), t)$, it seems more reasonable to take them into
account directly, since this way one does not need to increase the dimension of the problem and the terms 
$|\int_0^T \Psi_j(x(t), u(t), t) \, dt + f_j(x(0), x(T))|$ are easier to deal with both theoretically and numerically
than $\int_0^T |\dot{y}_j(t) - \Psi_j(x(t), u(t), t)| \, dt$. Moroever, Algorithmic Pattern~\ref{alg:Boosted_SEP_DCA}
can also be extended to the case of problems with pure state and mixed state-control \textit{equality} constraints of
the form $\Xi_s(x(t), u(t), t) = 0$ by including the terms $\int_0^T |\Xi_s(x(t), u(t), t)| \, dt$ into the penalty
term $\varphi$ and including the corresponding convex majorants of these terms into the convex majorant $Q_c$ of the
penalty function $\Phi_c$ for the problem $(\mathcal{P})$. We omit a detailed description of the fairly obvious
extension of Algorithmic Pattern~\ref{alg:Boosted_SEP_DCA} to the case of problems with isoperimetric constraints and
mixed equality constraints for the sake of shortness, and only note here that \textit{all} theoretical results presented
below hold true for this extension.
}
\end{remark}

\begin{remark} \label{rmrk:MaxPenParamSafeguard}
Although theoretical analysis of Algorithmic Pattern~\ref{alg:Boosted_SEP_DCA} presented below is valid in a fairly
general setting, in practice the algorithmic pattern might fail to converge or find a critical point of the problem
$(\mathcal{P})$ in the case when this problem is in some sense degenerate. Therefore, a practical implementation of
B-STEP-DCA must include certain safeguards to ensure its proper behaviour in degenerate cases. In particular, one must
set a maximal value of the penalty parameter $c_{max} > 0$. If this value is reached and the algorithmic pattern fails
to find a point satisfying a termination criterion after a fixed number of iterations, then the computations must be
stopped with a warning message. In such case one can try restarting the method from a different starting point or
consider using a different method altogether (or at least a different discretisation scheme/method for auxiliary convex
subproblems), since B-STEP-DCA might be unsuitable for solving the problem due to some kind of degeneracy.
\end{remark}

\subsection{Stopping criteria}
\label{subsect:StoppingCriteria}

The following inequalities with sufficiently small $\varepsilon_f > 0$, $\varepsilon_x > 0$, and 
$\varepsilon_{\varphi} > 0$ can be used as stopping criteria for B-STEP-DCA:
\begin{equation} \label{eq:StoppingCriterion1}
  \Big| \Phi_{c_{k + 1}}(x_{k + 1}, u_{k + 1}) - \Phi_{c_{k + 1}}(x_k, u_k) \Big| < \varepsilon_f \quad 
  \Big( \text{and/or } \| (x_k[c_{k + 1}] - x_k, u_k[c_{k + 1}] - u_k) \|_2 < \varepsilon_x \Big), \quad 
  \varphi(x_{k + 1}, u_{k + 1}) < \varepsilon_{\varphi}
\end{equation}
(note that $\varepsilon_{\varphi} > 0$ is a parameter from Step~1 of Algorithmic Pattern~\ref{alg:Boosted_SEP_DCA}).
A theoretical justification for the stopping criteria \eqref{eq:StoppingCriterion1} is provided in
Theorems~\ref{thrm:StoppingCriteria1} and \ref{thrm:StoppingCriteria2}.

Let us note that to avoid potentially expensive computations of the values 
$\Phi_{c_{k + 1}}(x_{k + 1}, u_{k + 1})$, $\Phi_{c_{k + 1}}(x_k, u_k)$, and $\varphi(x_{k + 1}, u_{k + 1})$, one can use
similar inequalities
\begin{equation} \label{eq:StoppingCriterion2}
  Q_{c_{k + 1}}(x_k, u_k; x_k, u_k; V_k) - Q_{c_{k + 1}}(x_k[c_{k + 1}], u_k[c_{k + 1}]; x_k, u_k; V_k) 
  < \varepsilon_f, \quad
  \Gamma(x_k[c_{k + 1}], u_k[c_{k + 1}]; x_k, u_k; V_k) < \varepsilon_{\varphi}
\end{equation}
involving values that are always computed on each iteration of Algorithmic Pattern~\ref{alg:Boosted_SEP_DCA} as a
stopping criterion.\footnote{The author is sincerely grateful to prof. A.S. Strekalovsky for pointing out this fact to
him.} The fact that the first inequality is, roughly speaking, equivalent to the corresponding inequality for 
the penalty function $\Phi_{c_{k + 1}}$ is shown in Corollary~\ref{crlr:StoppingCriteria}. Here we also note that the
validity of the inequality for $Q_{c_{k + 1}}$ by definition means that $(x_k, u_k)$ is a generalised
$(\varepsilon_f + \varepsilon_k)$-critical point of the problem $(\mathcal{P})$. 

The equivalence of the corresponding inequalities for the penalty terms follows from the fact that
\[
  \Gamma(x_k[c_{k + 1}], u_k[c_{k + 1}]; x_k, u_k; V_k) \ge \varphi(x_k[c_{k + 1}], u_k[c_{k + 1}]), \quad
  \Gamma(x_k, u_k; x_k, u_k; V_k) = \varphi(x_k, u_k),
\]
since $\Gamma(\cdot; x_k, u_k; V_k)$ is a global convex majorant of $\varphi(\cdot)$.

\subsection{Strategies for choosing line search tolerances $\nu_k$}

As the convergence analysis presented below reveals, the main condition that the line search tolerances $\nu_k$ must
satisfy to ensure the overall convergence of the method is summability:
\begin{equation} \label{eq:NuKSummable}
  \sum_{k = 0}^{\infty} \nu_k < + \infty. 
\end{equation}
The validity of this condition can be ensured in several different ways. Namely, the following strategies for choosing
$\nu_k$ can be used (see Ref.~\cite{FerreiraSantosSouza}):
\begin{itemize}
\item[\textbf{(S1)}]{A priori strategy: choose any sequence $\{ \nu_k \} \subset (0, + \infty)$ such that
$\sum_{k = 0}^{\infty} \nu_k < + \infty$.}

\item[\textbf{(S2)}]{Strategy based on function values: Fix any $\delta_{\min} \in (0, 1)$ and $\nu_0 > 0$. For any 
$k \in \mathbb{N}$ choose any $\delta_{k + 1} \in [\delta_{\min}, 1)$ and $\nu_{k + 1}$ satisfying the inequalities
\begin{equation} \label{eq:S2Strategy}
  0 < \nu_{k + 1} \le (1 - \delta_{k + 1}) 
  \Big( \Phi_{c_{k + 1}}(x_k, u_k) - \Phi_{c_{k + 1}}(x_{k + 1}, u_{k + 1}) + \nu_k \Big).
\end{equation}
}

\vspace{-5mm}

\item[\textbf{(S3)}]{Strategy based on the sequence of iterates: choose any sequence $\{ \nu_k \} \subset (0, + \infty)$
such that for every $\delta > 0$ one can find $k_0 \in \mathbb{N}$ such that 
$\nu_k \le \delta \| (x_k[c_{k + 1}] - x_k, u_k[c_{k + 1}] - u_k) \|_2^2$ for all $k \ge k_0$.}
\end{itemize}
Note that in the case of strategy $(S3)$ one can define 
$\nu_k = \gamma_k \| (x_k[c_{k + 1}] - x_k, u_k[c_{k + 1}] - u_k) \|_2^2$ for any sequence 
$\{ \gamma_k \} \subset (0, + \infty)$ such that $\gamma_k \to 0$ as $k \to \infty$. Let us also note that if 
$\| (x_k[c_{k + 1}] - x_k, u_k[c_{k + 1}] - u_k) \|_2 > 0$ (that is, if $(x_k, u_k)$ is not critical for the problem
$(\mathcal{P})$), then one can always find $\nu_{k + 1}$ satisfying inequalities \eqref{eq:S2Strategy} (see
Crlr.~\ref{crlr:S2Strategy} below).

In Subsection~\ref{subsect:ElementaryAnalysis} we will show that both strategies $(S2)$ and $(S3)$ satisfy condition
\eqref{eq:NuKSummable} under some additional assumptions. For a more detailed discussion of various strategies
for choosing $\nu_k$ see Ref.\cite{FerreiraSantosSouza}.

\subsection{Strategies for choosing trial step lengths $\overline{\alpha}_k$}
\label{subsect:TrialStepSize}

The following strategies for choosing trial step length $\overline{\alpha}_k$ can be used:
\begin{enumerate}
\item{Constant trial step size: $\overline{\alpha}_k = \overline{\alpha} > 0$ for all $k \in \mathbb{N}$ and some
$\overline{\alpha} > 0$.}

\item{Step size from the previous iteration: $\overline{\alpha}_k = \alpha_{k - 1}$ for all $k \in \mathbb{N}$.}

\item{Self-adaptive trial step size: for any $k \ge 2$, if $\overline{\alpha}_{k - 2} = \alpha_{k - 2}$ and
$\overline{\alpha}_{k - 1} = \alpha_{k - 1}$, then put $\overline{\alpha}_k = \gamma \alpha_{k - 1}$ for some fixed
$\gamma > 1$; else set $\overline{\alpha}_k = \alpha_{k - 1}$.}
\end{enumerate}
The first strategy was proposed by Arag\'{o}n Artacho et al.\cite{AragonArtachoFlemingVuong}. The second strategy was
considered in Ferreira et al.\cite{FerreiraSantosSouza}, while the third one was verified numerically by
Arag\'{o}n Artacho and Vuong\cite{AragonArtachoVuong}. The main benefit of the self-adaptive strategy consists in
the fact that it is the only strategy of the three that can increase the trial step size $\overline{\alpha}_k$ to allow
the method to make bigger steps and, as a result, reach critical points faster. It was shown in
Ref.~\cite{AragonArtachoVuong} that on average the boosted DCA with self-adaptive strategy was $1.7$ times
faster than the boosted DCA with the constant trial step size.

Let us note that the convergence analysis of Algorithmic Pattern~\ref{alg:Boosted_SEP_DCA} presented below is valid for
an \textit{arbitrary} choice of trial step lengths $\overline{\alpha_k} \ge 0$, 
including $\overline{\alpha_k} \equiv 0$.

\section{Correctness of the method}
\label{sect:Correctness}

We start our analysis of Algorithmic Pattern~\ref{alg:Boosted_SEP_DCA} by showing that this algorithmic pattern is
correctly defined. To this end, we need to show that (i) the optimal value of the convex subproblem
\eqref{subprob:MainStep} is finite, so that an $\varepsilon$-solution of this problem with $\varepsilon > 0$ always
exists, (ii) inequalities \eqref{eq:NearOptimalInfeasMeas}, \eqref{eq:InfeasMeasDecay}, and \eqref{eq:PenFuncDecay} can
always be satisfied by sufficiently increasing the penalty parameter, and (iii) inequality \eqref{eq:LineSearchIneq}
is satisfied for some $j_k \in \mathbb{N}$ for any choice of nonmonotone line search tolerance $\nu_k > 0$ and trial
step size $\overline{\alpha}_k \ge 0$. Then one can conclude that each step of Algorithmic
Pattern~\ref{alg:Boosted_SEP_DCA} is correctly defined and to perform one iteration of this algorithmic pattern one
needs to approximately solve only a finite number of convex subproblems~\eqref{subprob:MainStep}.

Hereinafter we suppose that the following assumption ensuring the correctness of 
Algorithmic Pattern~\ref{alg:Boosted_SEP_DCA} holds true.

\begin{assumption} \label{assumpt:PenaltyFuncBoundedBelow}
The penalty function $\Phi_c(x, u)$ with $c = c_0$ is bounded below on the set $X_0$. 
\end{assumption}

With the use of this assumption we can immediately prove the correctness of the definition of $(x_k[c], u_k[c])$ for any
$c \ge c_0$. Note that the point $(\widehat{x}_k, \widehat{u}_k)$ is always correctly defined, since the function
$\Gamma(\cdot; x_k, u_k; V_k)$ is nonnegative by definition.

\begin{proposition}
For any $k \in \mathbb{N}$ and $c \ge c_0$ the optimal value of problem \eqref{subprob:MainStep} is finite.
\end{proposition}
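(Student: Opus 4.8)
The plan is to exploit the global majorant inequality already established in the proof of Proposition~\ref{prp:OptCond}. Applying inequality~\eqref{eq:GlobalConvexMajorant} at the current iterate $(x_k, u_k)$ in place of $(x_*, u_*)$, and with the collection of subgradients $V_k$ from \eqref{eq:CollectedSubgradients}, one obtains that for every $(x, u) \in X$
\[
  Q_c(x, u; x_k, u_k; V_k) \ge \Phi_c(x, u) + A_k,
\]
where
\[
  A_k = \int_0^T \Big( H_0(x_k(t), u_k(t), t) - \langle V_{k0}(t), (x_k(t), u_k(t)) \rangle \Big)\, dt
  + h_0(x_k(0), x_k(T)) - \langle v_{k0}, (x_k(0), x_k(T)) \rangle
\]
is a constant independent of $(x, u)$. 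The first thing I would check is that $A_k$ is finite: since $(x_k, u_k) \in X$, the trajectory and control are essentially bounded, so choosing $R$ with $|x_k(t)| + |u_k(t)| \le R$ for a.e.\ $t$ and invoking the growth and subgradient bounds of Assumption~\ref{assumpt:ContDiff} makes both the integrand and the end-point terms bounded, whence $A_k \in \mathbb{R}$.

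Next I would establish that $Q_c(\cdot; x_k, u_k; V_k)$ is bounded below on $X_0$ for every $c \ge c_0$. Writing $\Phi_c = \Phi_{c_0} + (c - c_0)\varphi$ and using that $\varphi \ge 0$ by its definition~\eqref{eq:PenTerm} together with $c - c_0 \ge 0$, we get $\Phi_c(x, u) \ge \Phi_{c_0}(x, u)$ for all $(x, u) \in X_0$. By Assumption~\ref{assumpt:PenaltyFuncBoundedBelow} the function $\Phi_{c_0}$ is bounded below on $X_0$, say by $m := \inf_{(x,u) \in X_0} \Phi_{c_0}(x, u) > -\infty$. Combining this with the majorant inequality yields
\[
  Q_c(x, u; x_k, u_k; V_k) \ge m + A_k \quad \text{for all } (x, u) \in X_0,
\]
so the infimum of the subproblem is bounded below by the finite number $m + A_k$.

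Finally, to rule out that the optimal value equals $+\infty$ (i.e.\ that the objective is nowhere finite on $X_0$), I would evaluate the objective at the feasible point $(x_k, u_k) \in X_0$. Since inequality~\eqref{eq:GlobalConvexMajorant} holds with equality at $(x, u) = (x_k, u_k)$, we have $Q_c(x_k, u_k; x_k, u_k; V_k) = \Phi_c(x_k, u_k) + A_k$, which is finite because both $\Phi_c(x_k, u_k)$ and $A_k$ are finite. Hence the optimal value lies between $m + A_k$ and $\Phi_c(x_k, u_k) + A_k$, and is therefore finite. I expect the only genuinely delicate point to be the finiteness of $A_k$ and of $\Phi_c(x_k, u_k)$, which rests entirely on Assumption~\ref{assumpt:ContDiff}; everything else is a direct consequence of the majorant inequality proved for Proposition~\ref{prp:OptCond} together with the nonnegativity of the penalty term.
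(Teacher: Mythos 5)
Your proof is correct and follows essentially the same route as the paper: the majorant inequality \eqref{eq:GlobalConvexMajorant} applied at $(x_k,u_k)$, the monotonicity of $\Phi_c$ in $c$ via $\varphi \ge 0$, and Assumption~\ref{assumpt:PenaltyFuncBoundedBelow} giving boundedness below on $X_0$. You are in fact slightly more careful than the paper in explicitly verifying that the constant $A_k$ is finite and in ruling out the value $+\infty$ by evaluating at $(x_k,u_k)$; the paper's only additional ingredient is a brief induction on $k$ to guarantee that the iterate $(x_k,u_k)$ itself is well defined, which your argument covers implicitly since it works for an arbitrary element of $X$.
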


\begin{proof}
Let $k = 0$. Taking into account the definition $Q_c$ (see Eqs.~\eqref{eq:ExPenConvexMajorant} and
\eqref{eq:GlobalConvexMajorant}) and applying the definition of subgradient one can easily check that
\begin{align*}
  Q_c(x, u; x_k, u_k; V_k) \ge \Phi_c(x, u)
  &+ \int_0^T \Big( H_0(x_k(t), u_k(t), t) - \langle V_{k0}(t), (x_k(t), u_k(t)) \rangle \Big) \, dt
  \\
  &+ h_0(x_k(0), x_k(T)) + \langle v_{k0}, (x_k(0), x_k(T)) \rangle
\end{align*}
for any $(x, u) \in X$ and $c > 0$. Hence bearing in mind Assumption~\ref{assumpt:PenaltyFuncBoundedBelow} and the fact
that the penalty function $\Phi_c$ is obviously nondecreasing in $c$ one can conclude that for any $c \ge c_0$ the
function $Q_c(\cdot; x_k, u_k; V_k)$ is bounded below on the set $X_0$. Therefore, the optimal value of problem
\eqref{subprob:MainStep} is finite for $k = 0$ and any $c \ge c_0$. Now, arguing by induction one can easily show that
the same statement holds true for any $k \in \mathbb{N}$. 
\end{proof}

Let us now prove a useful auxiliary result on behaviour of the function 
$c \mapsto \Gamma(x_k[c], u_k[c]; x_k, u_k; V_k)$ that is crucial for the proof of correctness of Algorithmic
Pattern~\ref{alg:Boosted_SEP_DCA}. For the sake of convenience, for any $k \in \mathbb{N}$ denote by
\[
  \omega_k(x, u) = \int_0^T \Big( G_0(x(t), u(t), t) - \langle V_{k0}(t), (x(t), u(t)) \rangle \Big) \, dt 
  + g_0(x(0), x(T)) - \langle v_{k0}, (x(0), x(T)) \rangle
\]
the convex majorant (up to a constant) of the cost functional $J(x, u)$ (see \eqref{eq:ExPenConvexMajorant}).

\begin{lemma} \label{lem:InfeasMeasBehaviour}
Let $(x_k[c], u_k[c])$ be an $\varepsilon_k$-optimal solution of problem \eqref{subprob:MainStep}. Then for any 
$k \in \mathbb{N}$ the following statements hold true:
\begin{enumerate}
\item{for any $t > c > 0$ one has $\Gamma(x_k[t], u_k[t]; x_k, u_k; V_k) \le \Gamma(x_k[c], u_k[c]; x_k, u_k; V_k) + 2
\varepsilon_k / (t - c)$;
}

\item{for any $t > c > 0$ one has $\omega_k(x_k[t], u_k[t]) \ge \omega_k(x_k[c], u_k[c]) - 2 c \varepsilon_k / (t - c) -
\varepsilon_k$;
}

\item{$\Gamma(x_k[c], u_k[c]; x_k, u_k; V_k)$ converges to the optimal value of problem \eqref{subprob:OptInfeasMeas} as
$c \to + \infty$.
}
\end{enumerate}
\end{lemma}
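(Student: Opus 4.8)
The plan is to exploit the decomposition $Q_c(x, u; x_k, u_k; V_k) = \omega_k(x, u) + c\, \Gamma(x, u; x_k, u_k; V_k)$, which is immediate upon comparing the definitions \eqref{eq:ExPenConvexMajorant} and \eqref{def:Gamma}. Writing for brevity $\Gamma_c := \Gamma(x_k[c], u_k[c]; x_k, u_k; V_k)$ and $\omega_c := \omega_k(x_k[c], u_k[c])$, all three statements will follow from suitable applications of the $\varepsilon_k$-optimality of the points $(x_k[c], u_k[c])$.

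For the first two statements I would run the standard two-parameter penalty comparison. Since $(x_k[c], u_k[c])$ is an $\varepsilon_k$-optimal solution of \eqref{subprob:MainStep} with penalty $c$, testing it against the competitor $(x_k[t], u_k[t])$ gives
\[
  \omega_c + c\Gamma_c \le \omega_t + c\Gamma_t + \varepsilon_k,
\]
and, symmetrically, testing the $\varepsilon_k$-optimal solution $(x_k[t], u_k[t])$ of the problem with penalty $t$ against $(x_k[c], u_k[c])$ yields
\[
  \omega_t + t\Gamma_t \le \omega_c + t\Gamma_c + \varepsilon_k.
\]
Adding these two inequalities cancels the $\omega$-terms and leaves $(t - c)\Gamma_t \le (t - c)\Gamma_c + 2\varepsilon_k$, which is statement~1 after dividing by $t - c > 0$. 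Statement~2 then follows by rearranging the first displayed inequality as $\omega_t \ge \omega_c - c(\Gamma_t - \Gamma_c) - \varepsilon_k$ and inserting the bound $\Gamma_t - \Gamma_c \le 2\varepsilon_k/(t - c)$ just obtained.

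The third statement is the main obstacle, since the ``almost monotonicity'' of statement~1 only controls $\Gamma_c$ from above along increasing $c$ and does not by itself identify the limit. Let $\Gamma^*$ denote the optimal value of \eqref{subprob:OptInfeasMeas}. As $\Gamma_c \ge \Gamma^*$ for every $c$, it suffices to prove $\limsup_{c \to \infty} \Gamma_c \le \Gamma^*$. Fixing $\delta > 0$ and a point $(x, u) \in X_0$ with $\Gamma(x, u; x_k, u_k; V_k) \le \Gamma^* + \delta$, the $\varepsilon_k$-optimality of $(x_k[c], u_k[c])$ gives
\[
  \omega_c + c\Gamma_c \le \omega_k(x, u) + c(\Gamma^* + \delta) + \varepsilon_k.
\]
The delicate point is to bound $\omega_c$ from below, and here I would invoke Assumption~\ref{assumpt:PenaltyFuncBoundedBelow} through the preceding proposition: the function $Q_{c_0}(\cdot; x_k, u_k; V_k)$ is bounded below on $X_0$ by some constant $M$, so that $\omega_c = Q_{c_0}(x_k[c], u_k[c]; x_k, u_k; V_k) - c_0\Gamma_c \ge M - c_0\Gamma_c$. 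Substituting this and collecting the $\Gamma_c$-terms yields $(c - c_0)\Gamma_c \le c(\Gamma^* + \delta) + A$ with $A := \omega_k(x, u) + \varepsilon_k - M$ independent of $c$, whence
\[
  \Gamma_c \le \frac{c}{c - c_0}(\Gamma^* + \delta) + \frac{A}{c - c_0}.
\]
Letting $c \to \infty$ gives $\limsup_{c \to \infty} \Gamma_c \le \Gamma^* + \delta$, and since $\delta > 0$ is arbitrary the convergence $\Gamma_c \to \Gamma^*$ follows. I expect the only subtlety to be that an exact minimiser of $\Gamma(\cdot; x_k, u_k; V_k)$ need not exist, but this is harmless: the argument uses only near-minimisers, which always exist by definition of the infimum $\Gamma^*$.
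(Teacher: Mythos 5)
Your proof is correct. For statements~1 and~2 your two-parameter comparison is exactly the paper's argument: test each $\varepsilon_k$-optimal solution against the other, add the resulting inequalities to cancel the $\omega$-terms, and then feed the bound on $\Gamma_t - \Gamma_c$ back into the first inequality. For statement~3 you take a genuinely different route. The paper argues by contradiction along an unbounded sequence $\xi_l$ and bounds $\omega_k(x_k[\xi_l], u_k[\xi_l])$ from below \emph{via statement~2 of the lemma itself} (comparing against the fixed point $(x_k[c_0], u_k[c_0])$), so that the term $\xi_l \delta/2$ forces the right-hand side to blow up. You instead give a direct $\limsup$ estimate, obtaining the lower bound $\omega_c \ge M - c_0 \Gamma_c$ from the boundedness below of $Q_{c_0}(\cdot; x_k, u_k; V_k)$ on $X_0$, which is exactly what the preceding proposition (via Assumption~\ref{assumpt:PenaltyFuncBoundedBelow}) supplies. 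Both lower bounds are legitimate and available at this point in the paper; yours makes the dependence on Assumption~\ref{assumpt:PenaltyFuncBoundedBelow} explicit and yields a cleaner, quantitative inequality $\Gamma_c \le \tfrac{c}{c - c_0}(\Gamma^* + \delta) + \tfrac{A}{c - c_0}$ for each fixed $\delta$, while the paper's version is self-contained within the lemma (it reuses statement~2 rather than the global lower bound). Your closing remarks are also on point: $\Gamma_c \ge \Gamma^*$ holds simply because $(x_k[c], u_k[c]) \in X_0$, and only near-minimisers of $\Gamma(\cdot; x_k, u_k; V_k)$ are needed, so non-existence of an exact minimiser causes no trouble.
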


\begin{proof}
Fix any $k \in \mathbb{N}$ and $t > c > 0$. Observe that 
$Q_c(x_k[c], u_k[c]; x_k, u_k; V_k) \le Q_c(x_k[t], u_k[t]; x_k, u_k; V_k) + \varepsilon_k$ 
by the definition $(x_k[c], u_k[c])$, which yields
\begin{equation} \label{eq:IncreasedPenParamDiff}
  \omega_k(x_k[c], u_k[c]) - \omega_k(x_k[t], u_k[t]) 
  \le c \Big( \Gamma(x_k[t], u_k[t]; x_k, u_k; V_k) - \Gamma(x_k[c], u_k[c]; x_k, u_k; V_k) \Big) + \varepsilon_k.
\end{equation}
In turn, by the definition of $(x_k[t], u_k[t])$ one has
\[
  \omega_k(x_k[t], u_k[t]) - \omega_k(x_k[c], u_k[c]) 
  \le t \Big( \Gamma(x_k[c], u_k[c]; x_k, u_k; V_k) - \Gamma(x_k[t], u_k[t]; x_k, u_k; V_k) \Big) + \varepsilon_k.
\]
By summing up these two inequalities one obtains
\[
  (t - c) \Big( \Gamma(x_k[c], u_k[c]; x_k, u_k; V_k) - \Gamma(x_k[t], u_k[t]; x_k, u_k; V_k) \Big) + 2
\varepsilon_k \ge 0,
\]
which implies that 
\[
  \Gamma(x_k[c], u_k[c]; x_k, u_k; V_k) - \Gamma(x_k[t], u_k[t]; x_k, u_k; V_k) 
  \ge -\frac{2 \varepsilon_k}{t - c},
\]
that is, the first statement of the lemma holds true. Moreover, combining the inequality above with 
Ineq.~\eqref{eq:IncreasedPenParamDiff} one gets that
\[
  \omega_k(x_k[c], u_k[c]) \le \omega_k(x_k[t], u_k[t]) + \frac{2 c \varepsilon_k}{t - c} + \varepsilon_k.
\]
Thus, the second statement of the lemma is proved as well.

Let us prove the third statement. Arguing by reductio ad absurdum suppose that this statement is false. Then there exist
an increasing unbounded sequence of penalty parameters $\{ \xi_l \} \subset (c_0; + \infty)$ and $\delta > 0$ such that
\[
  \Gamma(x_k[\xi_l], u_k[\xi_l]; x_k, u_k; V_k) \ge 
  \inf_{(x, u) \in X_0} \Gamma(x, u; x_k, u_k; V_k) + \delta
  \quad \forall l \in \mathbb{N}.
\]
Let $(x'_k, u'_k) \in X_0$ be a $(\delta/2)$-optimal solution of problem \eqref{subprob:OptInfeasMeas}. Then
\begin{equation} \label{eq:InfMeasNonConvergToOptimV}
  \Gamma(x_k[\xi_l], u_k[\xi_l]; x_k, u_k; V_k) \ge 
  \Gamma(x'_k, u'_k; x_k, u_k; V_k) + \frac{\delta}{2}
  \quad \forall l \in \mathbb{N}.
\end{equation} 
By the definition of $(x_k[c], u_k[c])$ one has 
$\varepsilon_k + Q_c(x'_k, u'_k; x_k, u_k; V_k) \ge Q_c(x_k[c], u_k[c]; x_k, u_k; V_k)$ for any $c \ge c_0$, which with
the use of the second statement of the lemma and inequality \eqref{eq:InfMeasNonConvergToOptimV} implies that
\begin{align*}
  \varepsilon_k + \omega_k(x'_k, u'_k) &\ge \omega_k(x_k[\xi_l], u_k[\xi_l]) + \xi_l 
  \Big( \Gamma(x_k[\xi_l], u_k[\xi_l]; x_k, u_k; V_k) - \Gamma(x'_k, u'_k; x_k, u_k; V_k) \Big) 
  \\
  &\ge \omega_k(x_k[c_0], u_k[c_0]) - \frac{2 c_0 \varepsilon_k}{\xi_l - c_0} - \varepsilon_k + \xi_l \frac{\delta}{2}
\end{align*}
for any $l \in \mathbb{N}$. Note, however, that the right-hand side of this inequality increases unboundedly as 
$l \to + \infty$, since the sequence $\xi_l$ increases unboundedly, which leads to an obvious contradiction.
\end{proof}

With the use of the lemma above we can show that inequalities \eqref{eq:NearOptimalInfeasMeas},
\eqref{eq:InfeasMeasDecay}, and \eqref{eq:PenFuncDecay} on Steps 2--4 of Algorithmic Pattern~\ref{alg:Boosted_SEP_DCA}
hold true for any sufficiently large value of the penalty parameter, and, therefore, on each iteration of Algorithmic
Pattern~\ref{alg:Boosted_SEP_DCA} one has to solve only a finite number of convex subproblems \eqref{prob:MainStep}.

\begin{theorem} \label{thrm:MethodCorrectness}
For any $k \in \mathbb{N}$ and $\varepsilon_k \ge 0$ the following statements hold true:
\begin{enumerate}
\item{if $\Gamma(\widehat{x}_k, \widehat{u}_k; x_k, u_k; V_k) \ge \Gamma(x_k, u_k; x_k, u_k; V_k)$, then for any
$\varepsilon_{feas} > 0$ there exists $c_* > 0$ such that for all $c_+ \ge c_*$ inequality
\eqref{eq:NearOptimalInfeasMeas} holds true;
}

\item{if $\Gamma(\widehat{x}_k, \widehat{u}_k; x_k, u_k; V_k) < \Gamma(x_k, u_k; x_k, u_k; V_k)$, then for any 
$\eta_1 \in (0, 1)$ there exists $c_* > 0$ such that for all $c_+ \ge c_*$ inequality \eqref{eq:InfeasMeasDecay} holds
true;
}

\item{for any $\eta_2 \in (0, 1)$ there exists $c_* \ge c_+$ (here $c_+$ is from Step~4 of the Algorithmic
Pattern~\ref{alg:Boosted_SEP_DCA}) such that for all $c_{k + 1} \ge c_*$ inequality \eqref{eq:PenFuncDecay} holds true.
}
\end{enumerate}
\end{theorem}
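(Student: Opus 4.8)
The plan is to reduce all three assertions to the single convergence fact furnished by the third statement of Lemma~\ref{lem:InfeasMeasBehaviour}, after rewriting everything in terms of the decomposition $Q_c(x, u; x_k, u_k; V_k) = \omega_k(x, u) + c\,\Gamma(x, u; x_k, u_k; V_k)$, which follows at once from the definitions \eqref{eq:ExPenConvexMajorant} and \eqref{def:Gamma}. Throughout I would abbreviate $\gamma(c) := \Gamma(x_k[c], u_k[c]; x_k, u_k; V_k)$, $w(c) := \omega_k(x_k[c], u_k[c])$, $\Gamma_* := \Gamma(x_k, u_k; x_k, u_k; V_k)$, $\omega_* := \omega_k(x_k, u_k)$, and $\Gamma_\infty := \inf_{(x,u) \in X_0} \Gamma(x, u; x_k, u_k; V_k)$. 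Since $(x_k, u_k) \in X_0$ one has $\Gamma_\infty \le \Gamma_*$, and $\gamma(c) \ge \Gamma_\infty$ for every $c$, while Lemma~\ref{lem:InfeasMeasBehaviour} gives $\gamma(c) \to \Gamma_\infty$ as $c \to +\infty$.

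The first two statements are then essentially direct limiting arguments. Inequality~\eqref{eq:NearOptimalInfeasMeas} is just $\gamma(c_+) \le \Gamma_* + \varepsilon_{feas}$; since $\gamma(c_+) \to \Gamma_\infty \le \Gamma_*$, it suffices to take $c_*$ so large that $\gamma(c_+) \le \Gamma_\infty + \varepsilon_{feas}$ for all $c_+ \ge c_*$. For the second statement I would set $a := \Gamma_* - \Gamma_\infty \ge 0$ and $b := \Gamma_* - \Gamma(\widehat{x}_k, \widehat{u}_k; x_k, u_k; V_k) > 0$; because $\Gamma(\widehat{x}_k, \widehat{u}_k; x_k, u_k; V_k) \ge \Gamma_\infty$ we have $a \ge b$. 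Inequality~\eqref{eq:InfeasMeasDecay} is equivalent to $\gamma(c_+) \le \Gamma_* - \eta_1 b$, and as $\gamma(c_+) \to \Gamma_* - a$ with $(\Gamma_* - \eta_1 b) - (\Gamma_* - a) = a - \eta_1 b \ge (1 - \eta_1) b > 0$, choosing $c_*$ with $\gamma(c_+) \le \Gamma_\infty + (1 - \eta_1) b$ for all $c_+ \ge c_*$ closes the gap.

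The third statement is the substantive one. Using the decomposition, \eqref{eq:PenFuncDecay} with $c = c_{k+1}$ is equivalent to
\[
  w(c) - \omega_* \le c\,(1 - \eta_2)\,\big( \Gamma_* - \gamma(c) \big).
\]
If $\Gamma_\infty = \Gamma_*$, then $\gamma(c) \ge \Gamma_\infty = \Gamma_*$ for every $c$, so the right-hand side is nonnegative, while Assumption~\ref{assumpt:SubOptSolCorrectness} gives $w(c) - \omega_* \le -c\big(\gamma(c) - \Gamma_*\big) \le 0$, and the inequality holds for every $c_{k+1} \ge c_+$. If $\Gamma_\infty < \Gamma_*$, I would control the growth of $w(c)$ by comparison against a fixed intermediate parameter: using $\gamma(c_1) \to \Gamma_\infty$, fix $c_1$ with $\gamma(c_1) < (1 - \eta_2)\Gamma_* + \eta_2 \Gamma_\infty$, which is possible precisely because this right-hand side strictly exceeds $\Gamma_\infty$ when $\Gamma_* > \Gamma_\infty$. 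Testing the $\varepsilon_k$-optimality of $(x_k[c], u_k[c])$ against $(x_k[c_1], u_k[c_1]) \in X_0$ yields $w(c) - \omega_* \le (w(c_1) - \omega_*) + c\big(\gamma(c_1) - \gamma(c)\big) + \varepsilon_k$, and substituting this into the target reduces it to showing that the constant $w(c_1) - \omega_* + \varepsilon_k$ is eventually dominated by $c\big[(1 - \eta_2)\Gamma_* + \eta_2 \gamma(c) - \gamma(c_1)\big]$. The bracket tends to $(1 - \eta_2)\Gamma_* + \eta_2 \Gamma_\infty - \gamma(c_1) > 0$, so this product grows without bound and exceeds the constant for all large $c_{k+1}$.

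The main obstacle is exactly this last case $\Gamma_* > \Gamma_\infty$: a naive application of $\varepsilon_k$-optimality against $(x_k, u_k)$ only delivers $w(c) - \omega_* \le c\big(\Gamma_* - \gamma(c)\big)$, which carries the coefficient $1$ instead of the required $1 - \eta_2$. Recovering the missing factor is what forces the two-parameter comparison, with the intermediate $c_1$ chosen via the third statement of Lemma~\ref{lem:InfeasMeasBehaviour} so as to flatten the growth of $w(\cdot)$; I expect that verifying the positivity of the limiting bracket, which hinges on the strict inequality $\Gamma_\infty < \Gamma_*$, will be the crux of the argument.
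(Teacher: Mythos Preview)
Your proof is correct and follows essentially the same route as the paper: statements 1 and 2 are derived exactly as in the paper from the convergence $\gamma(c)\to\Gamma_\infty$ of Lemma~\ref{lem:InfeasMeasBehaviour}, and statement 3 is split into the same two cases $\Gamma_*=\Gamma_\infty$ (handled via Assumption~\ref{assumpt:SubOptSolCorrectness}) and $\Gamma_*>\Gamma_\infty$. The only cosmetic difference is the choice of comparison point in the second case---the paper tests $\varepsilon_k$-optimality of $(x_k[c],u_k[c])$ against a $\lambda\delta$-optimal solution of the infeasibility subproblem, whereas you test against $(x_k[c_1],u_k[c_1])$ for an intermediate penalty level $c_1$; both choices produce a point whose $\Gamma$-value is strictly below the threshold $(1-\eta_2)\Gamma_*+\eta_2\Gamma_\infty$, and the remaining algebra is the same.
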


\begin{proof}
The validity of the first and second statements of the theorem follows directly from the facts that
$\Gamma(x_k[c], u_k[c]; x_k, u_k; V_k)$ converges to the optimal value $\Gamma^*_k$ of problem
\eqref{subprob:OptInfeasMeas} as $c \to + \infty$ by Lemma~\ref{lem:InfeasMeasBehaviour}, and 
$\Gamma^*_k \le \Gamma(\widehat{x}_k, \widehat{u}_k; x_k, u_k; V_k) \le \Gamma^*_k + \varepsilon_k$ 
by definition.

Let us prove the third statement of the theorem. We will split the proof of this statement into two parts corresponding
to two different cases.

\textbf{Case I.} Suppose that $\Gamma(x_k, u_k; x_k, u_k; V_k) = \Gamma^*_k$. The for any $c \ge c_0$ one has
$\Gamma(x_k[c], u_k[c]; x_k, u_k; V_k) \ge \Gamma(x_k, u_k; x_k, u_k; V_k)$ and the right-hand side of inequality
\eqref{eq:PenFuncDecay} is greater than or equal to zero. In turn, by Assumption~\ref{assumpt:SubOptSolCorrectness} for
any $c_{k + 1} \ge c_0$ one has
\[
  Q_{c_{k + 1}}(x_k[c_{k + 1}], u_k[c_{k + 1}]; x_k, u_k; V_k) \le Q_{c_{k + 1}}(x_k, u_k; x_k, u_k; V_k),
\]
that is, the left-hand side of inequality \eqref{eq:PenFuncDecay} is less than or equal to zero. Thus, 
inequality \eqref{eq:PenFuncDecay} holds true for any $c_{k + 1} \ge c_+$.

\textbf{Case II.} Suppose that $\Gamma(x_k, u_k; x_k, u_k; V_k) > \Gamma^*_k$. Then by the third statement of
Lemma~\ref{lem:InfeasMeasBehaviour} there exist $\widehat{c} \ge c_+$ and $\delta > 0$ such that for any 
$c \ge \widehat{c}$ one has
\begin{equation} \label{eq:FeasibilityDecraseGap}
  \Gamma(x_k[c], u_k[c]; x_k, u_k; V_k) \le \Gamma(x_k, u_k; x_k, u_k; V_k) - \delta 
  \quad \forall c \ge \widehat{c}.
\end{equation}
Choose any $\lambda \in (0, 1 - \eta_2)$ and let $(\overline{x}_k, \overline{u}_k)$ be a $\lambda \delta$-optimal
solution of the optimal infeasibility problem \eqref{subprob:OptInfeasMeas}. By definition 
$Q_c(x_k[c], u_k[c]; x_k, u_k; V_k) \le Q_c(\overline{x}_k, \overline{u}_k; x_k, u_k; V_k) + \varepsilon_k$
for any $c \ge c_0$ or, equivalently,
\begin{multline*}
  \omega_k(x_k[c], u_k[c]) \le \omega_k(\overline{x}_k, \overline{u}_k) 
  + c \Big( \Gamma(\overline{x}_k, \overline{u}_k; x_k, u_k; V_k) - \Gamma(x_k[c], u_k[c]; x_k, u_k; V_k) \Big) 
  + \varepsilon_k
  \\
  \le \omega_k(\overline{x}_k, \overline{u}_k) + c \Big( \Gamma(\overline{x}_k, \overline{u}_k; x_k, u_k; V_k) 
  - \inf_{(x, u) \in X_0}\Gamma(x_k, u_k; x_k, u_k; V_k) \Big) + \varepsilon_k
  \le \omega_k(\overline{x}_k, \overline{u}_k) 
  + c \left( \lambda + \frac{\varepsilon_k}{c \delta} \right) \delta.
\end{multline*}
Pick any $\gamma \in (0, 1 - \eta_2 - \lambda)$. Then applying inequality~\eqref{eq:FeasibilityDecraseGap} one obtains
that for any
\[
  c \ge c_* := \max\left\{ \widehat{c}, \frac{\varepsilon_k}{\delta \gamma},
  \frac{\omega_k(\overline{x}_k, \overline{u}_k) - \omega_k(x_k, u_k)}{(1 - \eta_2 - \lambda - \gamma) \delta} \right\}
\]
one has
\begin{align*}
  \omega_k(x_k[c], u_k[c]) - \omega_k(x_k, u_k) 
  &\le \omega_k(\overline{x}_k, \overline{u}_k) - \omega_k(x_k, u_k) 
  + c \Big( \lambda + \frac{\varepsilon_k}{c \delta} \Big) \delta
  \le c \left( 1 - \eta_2 - \lambda - \gamma \right) \delta 
  + c \left( \lambda + \gamma \right) \delta
  \\
  &\le c (1 - \eta_2) \Big( \Gamma(x_k, u_k; x_k, u_k; V_k) - \Gamma(x_k[c], u_k[c]; x_k, u_k; V_k) \Big).
\end{align*}
Adding $c (\Gamma(x_k[c], u_k[c]; x_k, u_k; V_k) - \Gamma(x_k, u_k; x_k, u_k; V_k))$ to both sides of this inequality
one obtains that
\[
  Q_{c}(x_k[c], u_k[c]; x_k, u_k; V_k) - Q_{c}(x_k, u_k; x_k, u_k; V_k)
  \le c \eta_2 \Big( \Gamma(x_k[c], u_k[c]; x_k, u_k; V_k) - \Gamma(x_k, u_k; x_k, u_k; V_k) \Big),
\]
that is, inequality \eqref{eq:PenFuncDecay} is satisfied for any $c_{k + 1} \ge c_*$.
\end{proof}

Let us finally prove the correctness of the line search procedure on Step~5 of 
Algorithmic Pattern~\ref{alg:Boosted_SEP_DCA}.

\begin{proposition} \label{prp:PenFuncContinuity}
For any $k \in \mathbb{N}$ and for any values of parameters $\zeta \in (0, 1)$, $\overline{\alpha}_k \ge 0$, 
$\sigma > 0$, and $\nu_k > 0$ there exists $j_k \in \mathbb{N}$ satisfying inequality \eqref{eq:LineSearchIneq}.
\end{proposition}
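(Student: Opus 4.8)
The plan is to reduce the statement to the continuity of the penalty function $\Phi_{c_{k+1}}$ along the line segment emanating from $(x_k[c_{k+1}], u_k[c_{k+1}])$ in the direction $(x_k[c_{k+1}] - x_k, u_k[c_{k+1}] - u_k)$. Writing $\alpha = \zeta^{j_k} \overline{\alpha}_k$ and noting that $\alpha \downarrow 0$ as $j_k \to \infty$, the term $-\sigma (\zeta^{j_k} \overline{\alpha}_k)^2 \| (x_k[c_{k+1}] - x_k, u_k[c_{k+1}] - u_k) \|_2^2$ tends to $0$, so the right-hand side of \eqref{eq:LineSearchIneq} tends to $\nu_k$, while the left-hand side is the increment of $\Phi_{c_{k+1}}$ between the perturbed point and $(x_k[c_{k+1}], u_k[c_{k+1}])$. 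Hence if I can show that this increment tends to $0$, then for all sufficiently large $j_k$ the left-hand side is strictly below $\nu_k > 0$ and, a fortiori, below the right-hand side, which proves the claim. The degenerate case $\overline{\alpha}_k = 0$ is immediate, since then the left-hand side vanishes identically while the right-hand side equals $\nu_k > 0$, so the inequality holds for every $j_k$.

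First I would parametrise the segment by $(x_\alpha, u_\alpha) = (x_k[c_{k+1}] + \alpha(x_k[c_{k+1}] - x_k), u_k[c_{k+1}] + \alpha(u_k[c_{k+1}] - u_k))$ for $\alpha \in [0, \overline{\alpha}_k]$ and record the elementary pointwise limits as $\alpha \to 0$: one has $x_\alpha(t) \to x_k[c_{k+1}](t)$ uniformly on $[0, T]$, together with $\dot{x}_\alpha(t) \to \dot{x}_k[c_{k+1}](t)$ and $u_\alpha(t) \to u_k[c_{k+1}](t)$ for a.e. $t \in [0, T]$. Since the whole family $\{ (x_\alpha(t), u_\alpha(t)) : t \in [0, T], \: \alpha \in [0, \overline{\alpha}_k] \}$ is contained in a bounded subset of $\mathbb{R}^n \times \mathbb{R}^m$, there is $R > 0$ with $|x_\alpha(t)| + |u_\alpha(t)| \le R$ uniformly in $t$ and $\alpha$, and Assumption~\ref{assumpt:ContDiff} then supplies a constant $\gamma_R$ bounding $|G_0|, |H_0|, |G|, |H|, |p_s|, |q_s|$ (and hence $|F_0|$, $|F_i|$, $|\Xi_s|$) along the entire segment.

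Next I would pass to the limit term by term in $\Phi_{c_{k+1}}(x_\alpha, u_\alpha) = J(x_\alpha, u_\alpha) + c_{k+1} \varphi(x_\alpha, u_\alpha)$. For each integral term the integrand converges pointwise for a.e. $t$ by the Carath\'{e}odory continuity of $F_0, F, \Xi_s$ in $(x, u)$ and by the pointwise convergence of $(\dot{x}_\alpha, x_\alpha, u_\alpha)$, while $\gamma_R$ (a constant, hence integrable over $[0, T]$) dominates each integrand; the dominated convergence theorem then yields convergence of every integral to its value at $(x_k[c_{k+1}], u_k[c_{k+1}])$. The boundary terms $f_0(x_\alpha(0), x_\alpha(T))$, $\max\{ 0, f_i(x_\alpha(0), x_\alpha(T)) \}$ and $|f_j(x_\alpha(0), x_\alpha(T))|$ converge by the continuity of the finite-valued convex, hence continuous, DC functions $f_i$ together with $x_\alpha(0) \to x_k[c_{k+1}](0)$ and $x_\alpha(T) \to x_k[c_{k+1}](T)$. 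Collecting these limits gives $\Phi_{c_{k+1}}(x_\alpha, u_\alpha) \to \Phi_{c_{k+1}}(x_k[c_{k+1}], u_k[c_{k+1}])$ as $\alpha \to 0^+$, so the left-hand side of \eqref{eq:LineSearchIneq} tends to $0$ and the reduction described above completes the argument.

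The main obstacle is precisely this continuity step: the penalty function is intrinsically nonsmooth and its integral terms involve functions that are only Carath\'{e}odory, so continuity cannot be taken for granted and must be established through the dominated convergence theorem. The essential ingredients are exactly the two parts of Assumption~\ref{assumpt:ContDiff} --- the Carath\'{e}odory property giving the pointwise limits of the integrands, and the growth bound \eqref{eq:GrowthCondition} giving a uniform integrable majorant --- together with the observation that motion along a line keeps all iterates in a fixed bounded set, which is what makes a single constant $\gamma_R$ available for the whole segment.
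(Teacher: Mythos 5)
Your proof is correct and follows essentially the same route as the paper: both arguments reduce the claim to the continuity of $\Phi_{c_{k+1}}$ (observing that the right-hand side of \eqref{eq:LineSearchIneq} tends to $\nu_k > 0$ while the left-hand side tends to $0$) and establish that continuity by combining the Carath\'{e}odory property with the growth bound \eqref{eq:GrowthCondition} and Lebesgue's dominated convergence theorem. The only cosmetic difference is that you prove continuity just along the line segment, whereas the paper proves continuity of $\Phi_c$ on all of $X$ in the norm topology; your explicit treatment of the degenerate case $\overline{\alpha}_k = 0$ is a welcome addition.
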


\begin{proof}
Since $\nu_k > 0$, the validity of inequality \eqref{eq:LineSearchIneq} for any $j_k \in \mathbb{N}$ large enough
follows directly from the continuity of the penalty function $\Phi_c$ on the normed space 
$X = W^{1, \infty}([0, T]; \mathbb{R}^n) \times L^{\infty}([0, T])$. Therefore, let us prove that this function is
continuous.

Choose any $(x, u) \in X$ and any sequence $\{ (\overline{x}_k, \overline{u}_k) \} \subset X$ converging to $(x, u)$.
Then this sequence is bounded, $\overline{x}_k(t) \to x(t)$, $\dot{\overline{x}}_k(t) \to \dot{x}(t)$, and 
$\overline{u}_k(t)\to u(t)$ as $k \to \infty$ for a.e. $t \in [0, T]$, and 
$(\overline{x}_k(0), \overline{x}_k(T)) \to (x(0), x(T))$ as $k \to \infty$. Therefore
$f_i(\overline{x}_k(0), \overline{x}_k(T)) \to f_i(x(0), x(T))$ as $k \to \infty$ for any 
$i \in \{ 0 \} \cup \mathcal{I} \cup \mathcal{E}$, and for a.e. $t \in [0, T]$ one has
\begin{gather*}
  F_0(\overline{x}_k(t), \overline{u}(t), t) \to F_0(x(t), u(t), t), \quad
  \dot{\overline{x}}_k(t) - F(\overline{x}_k(t), \overline{u}(t), t) \to \dot{x}(t) - F(x(t), u(t), t), 
  \\
  \Xi_s(\overline{x}_k(t), \overline{u}_k(t), t) \to \Xi_s(x(t), u(t), t), \quad s \in \mathcal{M},
\end{gather*} 
as $k \to \infty$, since by our assumption all these maps are Carath\'{e}odory functions. Moreover, all these functions
are uniformly (in $k$) bounded due to the growth condition \eqref{eq:GrowthCondition} (see 
Assumption~\ref{assumpt:ContDiff}) and the the boundedness of the sequence $\{ (\overline{x}_k, \overline{u}_k) \}$ 
in $X$. Therefore applying Lebesgue's dominated convergence theorem one obtains that $\Phi_c(\overline{x}_k,
\overline{u}_k) \to \Phi_c(x, u)$ as $k \to \infty$ for any
$c > 0$. Thus, the penalty function $\Phi_c$ is continuous on $X$.
\end{proof}

\begin{remark} \label{rmrk:PenFuncContinuity}
Almost literally repeating the proof of the previous proposition and taking into account the fact that by 
the Sobolev imbedding theorem (Adams\cite{Adams}, Thm.~5.4) any convergent sequence in $W^{1, 1}[0, T]$ converges in
$C[0, T]$, one can readily verify that if a sequence $\{ (\overline{x}_k, \overline{u}_k) \} \subset X$ converges to
some $(x, u) \in X$ in the topology of the space $W^{1, 1}([0, T]; \mathbb{R}^n) \times L^1([0, T]; \mathbb{R}^m)$ and
the sequence $\{ \overline{u}_k \}$ is uniformly bounded, then 
$\varphi(\overline{x}_k, \overline{u}_k) \to \varphi(x, u)$ and
$\Phi_c(\overline{x}_k, \overline{u}_k) \to \Phi_c(x, u)$ as $k \to \infty$. Moroever, if the sequence 
$\{ \overline{u}_k \}$ is not uniformly bounded, then 
$\liminf_{k \to \infty} \varphi(\overline{x}_k, \overline{u}_k) \ge \varphi(x, u)$ by Fatou's lemma (see, e.g.
Ref.\cite{DunfordSchwartz}, Thm.~III.6.19), that is, the penalty term $\varphi$ is lower semicontinuous on the space $X$
endowed with the topology of $W^{1, 1}([0, T]; \mathbb{R}^n) \times L^1([0, T]; \mathbb{R}^m)$.
\end{remark}

\section{Convergence analysis}
\label{sect:ConvergenceToCriticalPoints}

Let us now turn to convergence analysis of B-STEP-DCA. We will analyse convergence of the infeasibility measure in
Subsections~\ref{subsect:Control_vs_traj} and \ref{subsect:InfeasMeas} in the general case, but for the sake of
simplicity study the behaviour of the sequence $\{ \Phi_{c_k}(x_k, u_k) \}$ and convergence to critical points in the
case when the penalty parameter $c_k$ remains bounded throughout iterations. Although the case when the penalty
parameter increases unboundedly is very important from the theoretical point of view, it is not particularly relevant
for a practical implementation of Algorithmic Pattern~\ref{alg:Boosted_SEP_DCA}, since, as was noted above (see
Remark~\ref{rmrk:MaxPenParamSafeguard}), an implementation of this method must include an upper bound $c_{max} > 0$ for
the penalty parameter, above which $c_k$ cannot be increased. Nonetheless, let us mention that it seems possible to
extend the cumbersome analysis of conditions ensuring the boundedness of the penalty parameter for the steering exact
penalty DCA\cite{Dolgopolik_StExPenDCA} in the finite dimensional case to the case of Algorithmic
Pattern~\ref{alg:Boosted_SEP_DCA}. We leave this extension as an interesting open problem for future research.

\subsection{Behaviour of the sequence $\{ \Phi_{c_k}(x_k, u_k) \}$ and correctness of stopping criteria}
\label{subsect:ElementaryAnalysis}

In order too facilitate convergence analysis, we make the following natural assumption on the way the penalty parameter
$c_k$ is increased in B-STEP-DCA.

\begin{assumption} \label{assumpt:PenaltyIncrease}
There exists $\varkappa > 0$ such that if the penalty parameter $c_+$ (or $c_{k + 1}$) is increased on Steps 2, 3 (or 4)
of Algorithmic Pattern~\ref{alg:Boosted_SEP_DCA}, then it is increased at least by $\varkappa$.
\end{assumption}

\begin{remark}
Let us note that typical rules for increasing the penalty parameter, such as $c_+ \leftarrow c_+ + \varkappa$ or 
$c_+ \leftarrow \theta c_+$ for some $\theta > 1$, obviously satisfy the above assumption.
\end{remark}

Let us first analyse behaviour of the sequence $\{ \Phi_{c_k}(x_k, u_k) \}$. Denote 
$\rho_k = \| (x_k[c_{k + 1}] - x_k, u_k[c_{k + 1}] - u_k) \|_2$ for any $k \in \mathbb{N}$.

\begin{lemma} \label{lem:PenFunctionDecay}
Let the sequence $\{ (x_k, u_k) \}$ be generated by Algorithmic Pattern~\ref{alg:Boosted_SEP_DCA}. Then for any 
$k \in \mathbb{N}$ one has
\begin{equation} \label{eq:PenFunctionDecay}
  \Phi_{c_{k + 1}}(x_{k + 1}, u_{k + 1}) \le \Phi_{c_{k + 1}}(x_k, u_k) - \sigma \alpha_k^2 \rho_k^2 + \nu_k.
\end{equation}
Moreover, if $H_0(x, u, t)$ is strongly convex in $(x, u)$ with modulus $\mu > 0$ for a.e. $t \in [0, T]$, then for any
$k \in \mathbb{N}$ one has
\begin{equation} \label{eq:PenFunctionDecay_StronglyConvex}
  \Phi_{c_{k + 1}}(x_{k + 1}, u_{k + 1}) 
  \le \Phi_{c_{k + 1}}(x_k, u_k) - \left( \frac{\mu}{2} + \sigma \alpha_k^2 \right) \rho_k^2 + \nu_k.
\end{equation}
If, in addition, the function $h_0$ is strongly convex with modulus $\mu' > 0$, then for any $k \in \mathbb{N}$ one has
\[
  \Phi_{c_{k + 1}}(x_{k + 1}, u_{k + 1}) 
  \le \Phi_{c_{k + 1}}(x_k, u_k) - \left( \frac{\mu}{2} + \sigma \alpha_k^2 \right) \rho_k^2 
  - \frac{\mu'}{2} \Big( |x_k[c_{k + 1}](0) - x_k(0)|^2 + |x_k[c_{k + 1}](T) - x_k(T)|^2 \Big) + \nu_k.
\]
\end{lemma}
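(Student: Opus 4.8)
The plan is to combine three ingredients already available in the excerpt: the global convex majorant relation from the proof of Proposition~\ref{prp:OptCond}, the line search inequality \eqref{eq:LineSearchIneq} from Step~5, and Assumption~\ref{assumpt:SubOptSolCorrectness}. First I would recall from \eqref{eq:GlobalConvexMajorant} that, upon setting
\[
  \kappa_k = \int_0^T \Big( H_0(x_k(t), u_k(t), t) - \langle V_{k0}(t), (x_k(t), u_k(t)) \rangle \Big) \, dt + h_0(x_k(0), x_k(T)) - \langle v_{k0}, (x_k(0), x_k(T)) \rangle,
\]
one has $Q_c(x, u; x_k, u_k; V_k) - \kappa_k \ge \Phi_c(x, u)$ for every $(x, u) \in X$ and every $c > 0$, with equality at $(x, u) = (x_k, u_k)$; in particular $Q_c(x_k, u_k; x_k, u_k; V_k) = \Phi_c(x_k, u_k) + \kappa_k$. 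No induction is needed, since $V_k$ consists of genuine subgradients at $(x_k, u_k)$.

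The key intermediate claim is that the point $(x_k[c_{k+1}], u_k[c_{k+1}])$ delivered on Step~4 does not increase the penalty function, that is, $\Phi_{c_{k+1}}(x_k[c_{k+1}], u_k[c_{k+1}]) \le \Phi_{c_{k+1}}(x_k, u_k)$. I would obtain this by applying the majorant relation at $(x_k[c_{k+1}], u_k[c_{k+1}])$ with $c = c_{k+1}$, then invoking Assumption~\ref{assumpt:SubOptSolCorrectness} in the form $Q_{c_{k+1}}(x_k[c_{k+1}], u_k[c_{k+1}]; x_k, u_k; V_k) \le Q_{c_{k+1}}(x_k, u_k; x_k, u_k; V_k)$, and finally using $Q_{c_{k+1}}(x_k, u_k; x_k, u_k; V_k) = \Phi_{c_{k+1}}(x_k, u_k) + \kappa_k$; the constant $\kappa_k$ cancels on both sides. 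Rewriting \eqref{eq:LineSearchIneq} with $\alpha_k = \zeta^{j_k}\overline{\alpha}_k$ and $\rho_k = \|(x_k[c_{k+1}] - x_k, u_k[c_{k+1}] - u_k)\|_2$ as $\Phi_{c_{k+1}}(x_{k+1}, u_{k+1}) \le \Phi_{c_{k+1}}(x_k[c_{k+1}], u_k[c_{k+1}]) - \sigma \alpha_k^2 \rho_k^2 + \nu_k$ and substituting the intermediate bound yields \eqref{eq:PenFunctionDecay}.

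For the strongly convex refinements, the plan is to strengthen the subgradient inequality underlying the majorant relation. If $H_0(\cdot, \cdot, t)$ is strongly convex with modulus $\mu$ for a.e. $t$, then for $V_{k0}(t) \in \partial_{x, u} H_0(x_k(t), u_k(t), t)$ one has $H_0(x, u, t) \ge H_0(x_k, u_k, t) + \langle V_{k0}(t), (x - x_k, u - u_k) \rangle + (\mu/2)|(x - x_k, u - u_k)|^2$; integrating over $[0, T]$ and using $\int_0^T |(x(t) - x_k(t), u(t) - u_k(t))|^2 \, dt = \|(x - x_k, u - u_k)\|_2^2$ upgrades the majorant relation to $Q_c(x, u; x_k, u_k; V_k) - \kappa_k \ge \Phi_c(x, u) + (\mu/2)\|(x - x_k, u - u_k)\|_2^2$. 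Evaluating at $(x_k[c_{k+1}], u_k[c_{k+1}])$ and repeating the cancellation argument gives $\Phi_{c_{k+1}}(x_k[c_{k+1}], u_k[c_{k+1}]) \le \Phi_{c_{k+1}}(x_k, u_k) - (\mu/2)\rho_k^2$, whence \eqref{eq:PenFunctionDecay_StronglyConvex} after combining with the line search inequality. The third inequality follows identically by additionally using strong convexity of $h_0$ with modulus $\mu'$, which contributes the extra endpoint term $(\mu'/2)\big(|x(0) - x_k(0)|^2 + |x(T) - x_k(T)|^2\big)$ to the majorant relation via the subgradient $v_{k0}$.

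I expect the only delicate points to be bookkeeping rather than analysis: correctly identifying and cancelling the constant $\kappa_k$ when passing between $Q_c$ and $\Phi_c$, and checking that the strong-convexity remainder integrates to exactly $(\mu/2)\|\cdot\|_2^2$ in the $L^2$ sense used by the line search penalty, so that the two quadratic terms combine additively into $(\mu/2 + \sigma\alpha_k^2)\rho_k^2$. No compactness or continuity arguments are required here; everything reduces to the (strong) subgradient inequality together with Assumption~\ref{assumpt:SubOptSolCorrectness}.
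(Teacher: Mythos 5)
Your proposal is correct and follows essentially the same route as the paper: the global convex majorant relation $Q_c(\cdot;x_k,u_k;V_k)-\kappa_k\ge\Phi_c(\cdot)$ with equality at $(x_k,u_k)$, combined with Assumption~\ref{assumpt:SubOptSolCorrectness} to get $\Phi_{c_{k+1}}(x_k[c_{k+1}],u_k[c_{k+1}])\le\Phi_{c_{k+1}}(x_k,u_k)$, followed by the line search inequality \eqref{eq:LineSearchIneq}, and the strong-convexity remainders integrated to $(\mu/2)\rho_k^2$ (resp. the endpoint term for $h_0$) exactly as in the paper's argument.
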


\begin{proof}
Fix any $k \in \mathbb{N}$. In accordance with the scheme of Algorithmic Pattern~\ref{alg:Boosted_SEP_DCA}, the point 
$(x_k[c_{k + 1}], u_k[c_{k + 1}])$ is defined as an $\varepsilon_k$-optimal solution of the subproblem
\eqref{subprob:MainStep} with $c = c_{k + 1}$ satisfying Assumption~\ref{assumpt:SubOptSolCorrectness}. Therefore
\begin{multline} \label{eq:MainStepIneq}
  Q_{c_{k + 1}}(x_k[c_{k + 1}], u_k[c_{k + 1}]; x_k, u_k; V_k) 
  \le Q_{c_{k + 1}}(x_k, u_k; x_k, u_k; V_k)
  = \Phi_{c_{k + 1}}(x_k, u_k) 
  \\
  + \int_0^T \Big( H_0(x_k(t), u_k(t), t) - \langle V_{k0}(t), (x_k(t), u_k(t)) \rangle \Big) \, dt
  + h_0(x_k(0), x_k(T)) - \langle v_{k0}, (x_k(0), x_k(T)) \rangle
\end{multline}
(see Eqs.~\eqref{eq:ExPenConvexMajorant} and \eqref{eq:GlobalConvexMajorant}). Subtracting 
\begin{equation} \label{eq:CostFunctionCorrection}
  \int_0^T \Big( H_0(x_k(t), u_k(t), t) - \langle V_{k0}(t), (x_k(t), u_k(t)) \rangle \Big) \, dt
  + h_0(x_k(0), x_k(T)) - \langle v_{k0}, (x_k(0), x_k(T)) \rangle
\end{equation}
from both sides of this inequality and applying the definition of subgradient of a convex function one obtains that
\begin{multline*}
  \Phi_{c_{k + 1}}(x_k, u_k) \ge Q_{c_{k + 1}}(x_k[c_{k + 1}], u_k[c_{k + 1}]; x_k, u_k; V_k)
  - \int_0^T \Big( H_0(x_k(t), u_k(t), t) - \langle V_{k0}(t), (x_k(t), u_k(t)) \rangle \Big) \, dt
  \\
  - h_0(x_k(0), x_k(T)) + \langle v_{k0}, (x_k(0), x_k(T)) \rangle
  \ge \Phi_{c_{k + 1}}(x_k[c_{k + 1}], u_k[c_{k + 1}]).
\end{multline*}
Hence by the definition of $x_{k + 1}$ (see Step~5 of Algorithmic Pattern~\ref{alg:Boosted_SEP_DCA}) one gets that
inequality \eqref{eq:PenFunctionDecay} holds true.

Let us now prove inequality \eqref{eq:PenFunctionDecay_StronglyConvex}. By our assumption the function $H_0$ is strongly
convex in $(x, u)$ with modulus $\mu$. Consequently, for a.e. $t \in [0, T]$ one has
\begin{align*}
  H_0(x_k[c_{k + 1}](t), u_k[c_{k + 1}](t), t) - H_0(x_k(t), u_k(t), t) 
  &\ge \langle V_{k0}(t), (x_k[c_{k + 1}](t) - x_k(t), u_k[c_{k + 1}](t) - u_k(t)) \rangle
  \\
  &+ \frac{\mu}{2} \big| (x_k[c_{k + 1}](t) - x_k(t), u_k[c_{k + 1}](t) - u_k(t) \big|^2.
\end{align*}
Subtracting \eqref{eq:CostFunctionCorrection} from both sides of inequality \eqref{eq:MainStepIneq}, applying the
inequality above and the definition of subgradient one gets that
\begin{align*}
  \Phi_{c_{k + 1}}(x_k, u_k) &\ge Q_{c_{k + 1}}(x_k[c_{k + 1}], u_k[c_{k + 1}]; x_k, u_k; V_k) 
  - \int_0^T \Big( H_0(x_k(t), u_k(t), t) - \langle V_{k0}(t), (x_k(t), u_k(t)) \rangle \Big) \, dt
  \\
  &- h_0(x_k(0), x_k(T)) + \langle v_{k0}, (x_k(0), x_k(T)) \rangle
  \\
  &\ge Q_{c_{k + 1}}(x_k[c_{k + 1}], u_k[c_{k + 1}]; x_k, u_k; V_k)
  - h_0(x_k(0), x_k(T)) + \langle v_{k0}, (x_k(0), x_k(T)) \rangle + \frac{\mu}{2} \rho_k^2
  \\
  &- \int_0^T \Big( H_0(x_k[c_{k + 1}](t), u_k[c_{k + 1}](t), t) 
  - \langle V_{k0}(t), (x_k[c_{k + 1}](t), u_k[c_{k + 1}](t)) \rangle \Big) \, dt
  \\
  &= \int_0^T F_0(x_k[c_{k + 1}](t), u_k[c_{k + 1}](t), t) \, dt 
  + g_0(x_k[c_{k + 1}](0), x_k[c_{k + 1}](T)) 
  \\
  &- h_0(x_k(0), x_k(T)) + \langle v_{k0}, (x_k(0), x_k(T)) \rangle
  + c \Gamma(x_k[c_{k + 1}], u_k[c_{k + 1}]; x_k, u_k; V_k) + \frac{\mu}{2} \rho_k^2
  \\
  &\ge \Phi_{c_{k + 1}}(x_k[c_{k + 1}], u_k[c_{k + 1}]) + \frac{\mu}{2} \rho_k^2.
\end{align*}
It remains to note that inequality \eqref{eq:PenFunctionDecay_StronglyConvex} follows directly from the inequality above
and the definition of $x_{k + 1}$ (see Step~5 of Algorithmic Pattern~\ref{alg:Boosted_SEP_DCA}). The validity of the
corresponding inequality in the case when the function $h_0$ is strongly convex is proved in exactly the same way.
\end{proof}

\begin{corollary} \label{crlr:S2Strategy}
Let the sequence $\{ (x_k, u_k) \}$ be generated by Algorithmic Pattern~\ref{alg:Boosted_SEP_DCA}, the sequence of
penalty parameters $\{ c_k \}$ be bounded, and the sequence $\{ \nu_k \}$ be chosen according to strategy $(S2)$.
Suppose also that $\rho_k > 0$ (i.e. $(x_k, u_k)$ is not a critical point of the problem $(\mathcal{P})$) for any 
$k \in \mathbb{N}$. Then the sequence $\{ \nu_k \}$ is correctly defined and $\sum_{k = 0}^{\infty} \nu_k < + \infty$.
\end{corollary}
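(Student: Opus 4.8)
The plan is to prove the two assertions — well-definedness of $\{\nu_k\}$ and summability — after first establishing the decisive structural fact that, under the present hypotheses, the penalty parameter is eventually constant. First I would observe that $\{c_k\}$ is nondecreasing: on Step~1 the value $c_+$ is initialised to $c_k$ and is only ever increased on Steps~2--4, and $c_{k+1} = c_+$, so $c_{k+1} \ge c_k$. By Assumption~\ref{assumpt:PenaltyIncrease} every iteration at which the penalty parameter strictly increases raises it by at least $\varkappa > 0$; since $\{c_k\}$ is bounded, only finitely many such increases can occur, and hence there exist $K \in \mathbb{N}$ and $\bar{c} > 0$ with $c_k = \bar{c}$ for all $k \ge K$. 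I expect this eventual-constancy observation to be the main obstacle, in the sense that it is the non-obvious ingredient that makes the telescoping argument below legitimate despite the penalty parameter varying inside the difference $\Phi_{c_{k+1}}(x_k,u_k)-\Phi_{c_{k+1}}(x_{k+1},u_{k+1})$.

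Next, for well-definedness I would argue by induction that $\nu_k > 0$ for every $k$ and that the admissible interval in \eqref{eq:S2Strategy} is nonempty. The base case $\nu_0 > 0$ is given. For the inductive step, assuming $\nu_k > 0$, I set $D_k := \Phi_{c_{k+1}}(x_k,u_k) - \Phi_{c_{k+1}}(x_{k+1},u_{k+1}) + \nu_k$ and show $D_k > 0$, so that any $\nu_{k+1} \in (0,\,(1-\delta_{k+1})D_k]$ is admissible and positive. By Lemma~\ref{lem:PenFunctionDecay} one has $D_k \ge \sigma \alpha_k^2 \rho_k^2 \ge 0$. If $\alpha_k > 0$, this is strictly positive because $\rho_k > 0$ by hypothesis; if instead $\alpha_k = 0$, then $(x_{k+1},u_{k+1}) = (x_k[c_{k+1}],u_k[c_{k+1}])$ by Step~5, and the inequality $\Phi_{c_{k+1}}(x_k,u_k) \ge \Phi_{c_{k+1}}(x_k[c_{k+1}],u_k[c_{k+1}])$ established inside the proof of Lemma~\ref{lem:PenFunctionDecay} gives $D_k \ge \nu_k > 0$.

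Finally, for summability I would fix $\beta := 1 - \delta_{\min} \in (0,1)$, so that $\delta_{k+1} \ge \delta_{\min}$ together with \eqref{eq:S2Strategy} yields the recursion $\nu_{k+1} \le \beta\,(S_k + \nu_k)$, where $S_k := \Phi_{c_{k+1}}(x_k,u_k) - \Phi_{c_{k+1}}(x_{k+1},u_{k+1})$. For $k \ge K$ all the penalty parameters coincide with $\bar{c}$, so $\sum_{k=K}^N S_k$ telescopes to $\Phi_{\bar{c}}(x_K,u_K) - \Phi_{\bar{c}}(x_{N+1},u_{N+1})$, which is bounded above by $M := \Phi_{\bar{c}}(x_K,u_K) - \inf_{X_0}\Phi_{\bar{c}}$; here $\inf_{X_0}\Phi_{\bar{c}} > -\infty$ because $\Phi_c$ is nondecreasing in $c$ and $\Phi_{c_0}$ is bounded below by Assumption~\ref{assumpt:PenaltyFuncBoundedBelow}. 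Summing the recursion over $k = K,\ldots,N$ and writing $T_N := \sum_{k=K}^N \nu_k$, I obtain $T_{N+1} - \nu_K \le \beta M + \beta T_N \le \beta M + \beta T_{N+1}$, the last step using that $\{T_N\}$ is nondecreasing. Since $\beta < 1$, this rearranges to the $N$-independent bound $T_{N+1} \le (\beta M + \nu_K)/(1-\beta)$. The nondecreasing partial sums $\{T_N\}$ are therefore bounded and converge, giving $\sum_{k=K}^\infty \nu_k < +\infty$, and adding the finite head $\sum_{k=0}^{K-1}\nu_k$ yields $\sum_{k=0}^\infty \nu_k < +\infty$. The only delicate point in this last paragraph is that $\nu_k$ appears on both sides of the recursion; it is precisely the factor $\beta < 1$ combined with the uniformly bounded telescoped sum that lets one absorb it and close the estimate.
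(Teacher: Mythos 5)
Your proposal is correct and follows essentially the same route as the paper: eventual constancy of $c_k$ from its boundedness and Assumption~\ref{assumpt:PenaltyIncrease}, positivity of $\Phi_{c_{k+1}}(x_k,u_k)-\Phi_{c_{k+1}}(x_{k+1},u_{k+1})+\nu_k$ via Lemma~\ref{lem:PenFunctionDecay} for well-definedness, and a telescoping bound combined with $\delta_{k+1}\ge\delta_{\min}$ for summability (you absorb $\sum\nu_k$ on both sides of the summed recursion, whereas the paper telescopes the quantity $\Phi+\nu$ directly, but these are equivalent bookkeeping). Your explicit treatment of the $\alpha_k=0$ case is in fact slightly more careful than the paper's, which simply asserts $0<\sigma\alpha_k^2\rho_k^2$.
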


\begin{proof}\footnote{The proof largely repeats the observation made in Ref.~\cite{FerreiraSantosSouza}, Remark~3.5.}
By the previous lemma one has
\[
   0 < \sigma \alpha_k^2 \rho_k^2 \le \Phi_{c_{k + 1}}(x_k, u_k) - \Phi_{c_{k + 1}}(x_{k + 1}, u_{k + 1}) + \nu_k.
\]
Therefore, for any $\delta_{k + 1} \in (0, 1)$ there exists $\nu_{k + 1} > 0$ satisfying inequality
\eqref{eq:S2Strategy}. In other words, the sequence $\{ \nu_k \}$ is correctly defined.

By virtue of Assumption~\ref{assumpt:PenaltyIncrease}, the boundedness of the sequence $\{ c_k \}$ implies that there
exists $k_0 \in \mathbb{N}$ such that $c_k = c_{k_0}$ for all $k \ge k_0$. From inequalities \eqref{eq:S2Strategy} it
follows that
\[
  0 \le \delta_{k + 1} \Big( \Phi_{c_{k_0}}(x_k, u_k) - \Phi_{c_{k_0}}(x_{k + 1}, u_{k + 1}) + \nu_k \Big)
  \le \Big( \Phi_{c_{k_0}}(x_k, u_k) + \nu_k \Big) 
  - \Big( \Phi_{c_{k_0}}(x_{k + 1}, u_{k + 1}) + \nu_{k + 1} \Big)
\]
for any $k \ge k_0$. Hence bearing in mind the fact that $\delta_{k + 1} \ge \delta_{\min} > 0$ one gets that
\begin{multline*}
  \delta_{\min} \sum_{k = k_0}^N 
  \Big( \Phi_{c_{k_0}}(x_k, u_k) - \Phi_{c_{k_0}}(x_{k + 1}, u_{k + 1}) + \nu_k \Big)
  \le \sum_{k = k_0}^N 
  \Big( \big(\Phi_{c_{k_0}}(x_k, u_k) + \nu_k\big) - \big(\Phi_{c_{k_0}}(x_{k + 1}, u_{k + 1}) + \nu_{k + 1} \big) \Big)
  \\
  = \Phi_{c_{k_0}}(x_{k_0}, u_{k_0}) + \nu_{k_0} - \Phi_{c_{k_0}}(x_{N + 1}, u_{N + 1}) - \nu_{N + 1}
  \le \Phi_{c_{k_0}}(x_{k_0}, u_{k_0}) - \Phi_{c_{k_0}}^* + \nu_{k _0}
\end{multline*}
for any $N \ge k_0$, where $\Phi_{c_{k_0}}^* = \inf_{(x, u) \in X_0} \Phi_{c_{k_0}}(x, u)$. Note that this infimum is
finite by Assumption~\ref{assumpt:PenaltyFuncBoundedBelow}. Thus, by inequalities \eqref{eq:S2Strategy} one has
\[
  \sum_{k = k_0}^{\infty} \nu_k 
  \le (1 - \delta_{\min}) \sum_{k = k_0}^{\infty} 
  \Big( \Phi_{c_{k_0}}(x_k, u_k) - \Phi_{c_{k_0}}(x_{k + 1}, u_{k + 1}) + \nu_k \Big)
  \le \frac{1 - \delta_{\min}}{\delta_{\min}} 
  \Big( \Phi_{c_{k_0}}(x_{k_0}, u_{k_0}) - \Phi_{c_{k_0}}^* + \nu_{k _0} \Big),
\]
and the proof is complete.
\end{proof}

\begin{corollary}
Let the sequence $\{ (x_k, u_k) \}$ be generated by Algorithmic Pattern~\ref{alg:Boosted_SEP_DCA}, the sequence of
penalty parameters $\{ c_k \}$ be bounded, and the sequence $\{ \nu_k \}$ be chosen according to strategy $(S3)$.
Suppose also that the function $H_0(x, u, t)$ is strongly convex in $(x, u)$ with modulus $\mu > 0$ for a.e. 
$t \in [0, T]$. Then $\sum_{k = 0}^{\infty} \nu_k < + \infty$.
\end{corollary}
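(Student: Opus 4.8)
The goal is to show that under strategy $(S3)$ and strong convexity of $H_0$, the line search tolerances are summable. The plan is to combine the strong-convexity decay estimate \eqref{eq:PenFunctionDecay_StronglyConvex} from Lemma~\ref{lem:PenFunctionDecay} with the defining property of strategy $(S3)$, namely that $\nu_k \le \delta \rho_k^2$ eventually for every $\delta > 0$. The key observation is that \eqref{eq:PenFunctionDecay_StronglyConvex} contains a term $-(\mu/2 + \sigma \alpha_k^2)\rho_k^2$ on the right-hand side; since $\mu > 0$, this gives a genuine coercive control on $\rho_k^2$ in terms of the decrease of the penalty function, \emph{independently} of whether $\alpha_k$ vanishes. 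This is the crucial feature that strategy $(S3)$ alone (without strong convexity) could not exploit, and it is why the hypothesis $\mu > 0$ appears.

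**Main steps.** First I would invoke the boundedness of $\{c_k\}$: by Assumption~\ref{assumpt:PenaltyIncrease} there is $k_0$ with $c_k = c_{k_0}$ for all $k \ge k_0$, so beyond $k_0$ the penalty function is fixed at $\Phi_{c_{k_0}}$. Next, from \eqref{eq:PenFunctionDecay_StronglyConvex} applied with this fixed parameter, I would rearrange to obtain
\[
  \frac{\mu}{2}\,\rho_k^2 \le \Phi_{c_{k_0}}(x_k, u_k) - \Phi_{c_{k_0}}(x_{k+1}, u_{k+1}) + \nu_k
\]
for all $k \ge k_0$. Now apply strategy $(S3)$: fix $\delta = \mu/4$ and choose $k_1 \ge k_0$ so that $\nu_k \le (\mu/4)\rho_k^2$ for all $k \ge k_1$. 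Substituting this bound for the $\nu_k$ on the right-hand side and absorbing it into the left gives $(\mu/4)\rho_k^2 \le \Phi_{c_{k_0}}(x_k,u_k) - \Phi_{c_{k_0}}(x_{k+1},u_{k+1})$ for $k \ge k_1$. This is a telescoping inequality in the nonincreasing quantity $\Phi_{c_{k_0}}$, so summing from $k_1$ to $N$ yields
\[
  \frac{\mu}{4}\sum_{k=k_1}^{N} \rho_k^2
  \le \Phi_{c_{k_0}}(x_{k_1}, u_{k_1}) - \Phi_{c_{k_0}}(x_{N+1}, u_{N+1})
  \le \Phi_{c_{k_0}}(x_{k_1}, u_{k_1}) - \Phi_{c_{k_0}}^*,
\]
the last bound using Assumption~\ref{assumpt:PenaltyFuncBoundedBelow}, which guarantees $\Phi_{c_{k_0}}^* := \inf_{(x,u)\in X_0}\Phi_{c_{k_0}}(x,u) > -\infty$. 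Letting $N \to \infty$ shows $\sum_k \rho_k^2 < +\infty$.

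**Concluding the summability of $\nu_k$.** Finally, since $\nu_k \le (\mu/4)\rho_k^2$ for all $k \ge k_1$ by the choice above, convergence of $\sum_k \rho_k^2$ immediately forces $\sum_{k=k_1}^{\infty} \nu_k < +\infty$, and adjoining the finitely many terms $\nu_0, \ldots, \nu_{k_1-1}$ leaves the full series finite. This establishes \eqref{eq:NuKSummable}.

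**Main obstacle.** The one subtle point is that strategy $(S3)$ guarantees $\nu_k \le \delta\rho_k^2$ only for $k$ beyond some threshold depending on $\delta$, so I must be careful to fix $\delta = \mu/4$ \emph{first} and only then extract the corresponding $k_1$; the strong-convexity modulus $\mu$ must be strictly positive for this $\delta$ to be usable in absorbing $\nu_k$ into the $(\mu/2)\rho_k^2$ term. I expect this coupling between the choice of $\delta$ and the coercivity constant $\mu$ to be the only place requiring genuine care; the rest is a routine telescoping argument mirroring the proof of Corollary~\ref{crlr:S2Strategy}.
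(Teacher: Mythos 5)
Your proposal is correct and follows essentially the same route as the paper's proof: fix the penalty parameter via Assumption~\ref{assumpt:PenaltyIncrease}, use the strong-convexity estimate \eqref{eq:PenFunctionDecay_StronglyConvex} together with the $(S3)$ bound $\nu_k \le (\mu/4)\rho_k^2$ (for $k$ large) to absorb $\nu_k$ and obtain a telescoping decrease of $\Phi_{c_{k_0}}$, then invoke Assumption~\ref{assumpt:PenaltyFuncBoundedBelow} to conclude $\sum_k \rho_k^2 < +\infty$ and hence $\sum_k \nu_k < +\infty$. Your care about fixing $\delta = \mu/4$ before extracting the threshold index matches the paper's argument exactly.
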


\begin{proof}
Since the sequence $\{ c_k \}$ is bounded, by Assumption~\ref{assumpt:PenaltyIncrease} there exists $k_1 \in \mathbb{N}$
such that $c_k = c_{k_1}$ for all $k \ge k_1$. In turn, by the definition of strategy $(S3)$ there exists 
$k_2 \in \mathbb{N}$ such that $\nu_k \le (\mu/4) \rho_k^2$ for all $k \ge k_2$. Consequently, by
Lemma~\ref{lem:PenFunctionDecay} one has
\[
  \Phi_{c_{k_1}}(x_{k + 1}, u_{k + 1}) 
  \le \Phi_{c_{k_1}}(x_k, u_k) - \left( \frac{\mu}{4} + \sigma \alpha_k^2 \right) \rho_k^2
  \quad \forall k \ge \max\{ k_1, k_2 \},
\]
which due to Assumption~\ref{assumpt:PenaltyFuncBoundedBelow} implies that $\sum_{k = 0}^{\infty} \rho_k^2 < + \infty$
and, therefore, $\sum_{k = 0}^{\infty} \nu_k < + \infty$ as well.
\end{proof}

\begin{remark}
{(i)~Note that if one chooses $\nu_k = \overline{\nu} \rho_k^2$ for some fixed 
$0 < \overline{\nu} < \mu/2$, then arguing in the same way as in the proof of the corollary above one gets that 
$\sum_{k = 0}^{\infty} \rho_k^2 < + \infty$ and $\sum_{k = 0}^{\infty} \nu_k < + \infty$, provided the function 
$H_0(x, u, t)$ is strongly convex in $(x, u)$ with modulus $\mu > 0$. Thus, in the strongly convex case one can choose 
$\nu_k = \overline{\nu} \rho_k^2$ for all $k \in \mathbb{N}$, provided $0 < \overline{\nu} < \mu/2$ (cf. strategy
$(S3')$ in Ref.~\cite{FerreiraSantosSouza}).
}

\noindent{(ii)~It should be mentioned that from the theoretical point of view the assumption on the strong convexity
of the function $H_0$ in $(x, u)$ is not restrictive, since one can always replace the DC decomposition 
$F_0(x, u, t) = G_0(x, u, t) - H_0(x, u, t)$ of the function $F_0$ with the following one
\[
  F_0(x, u, t) = \Big( G_0(x, u, t) + \frac{\mu}{2} \big( |x|^2 + |u|^2 \big) \Big) 
  - \Big( H_0(x, u, t) + \frac{\mu}{2} \big( |x|^2 + |u|^2 \big) \Big)
\]
for some $\mu > 0$. Note, however, that methods of DC optimisation, including (boosted) DCA, are \textit{not} invariant
with respect to the choice of DC decompositions of the objective functions and constraints 
(cf. Refs.~\cite{FerreiraSantosSouza,FerrerMartinzeLegaz}), and a particular choice of $\mu > 0$ can significantly
affect the performance of the method (in particular, significantly slow down/accelerate convergence). Therefore, from
the practical point of view a proper choice of $\mu > 0$ is a very challenging problem.
}
\end{remark}

With the use of Lemma~\ref{lem:PenFunctionDecay} we can justify the stopping criteria discussed in
Subsection~\ref{subsect:StoppingCriteria}.

\begin{theorem} \label{thrm:StoppingCriteria1}
Let the sequence $\{ (x_k, u_k) \}$ be generated by Algorithmic Pattern~\ref{alg:Boosted_SEP_DCA}, the sequence of
penalty parameters $\{ c_k \}$ be bounded, and $\sum_{k = 0}^{\infty} \nu_k < + \infty$.  Then the following statements
hold true:
\begin{enumerate}
\item{$\sum_{k = 0}^{\infty} \alpha_k^2 \rho_k^2 < + \infty$;}

\item{$|\Phi_{c_{k + 1}}(x_{k + 1}, u_{k + 1}) - \Phi_{c_{k + 1}}(x_k, u_k)| \to 0$ as $k \to \infty$;}

\item{the sequence $\{ \Phi_{c_k}(x_k, u_k) \}$ converges.}
\end{enumerate}
\end{theorem}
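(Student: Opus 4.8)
The plan is to first exploit the boundedness of $\{ c_k \}$ to reduce everything to a \emph{fixed} penalty parameter, and then read off the three assertions from the nonmonotone decay inequality \eqref{eq:PenFunctionDecay} of Lemma~\ref{lem:PenFunctionDecay}. By Assumption~\ref{assumpt:PenaltyIncrease} every increase of the penalty parameter is by at least $\varkappa > 0$, so a bounded $\{ c_k \}$ can be increased only finitely often; hence there is a $k_0 \in \mathbb{N}$ with $c_k \equiv \bar{c} := c_{k_0}$ for all $k \ge k_0$. For such $k$ the quantity appearing in the third assertion is $a_k := \Phi_{\bar{c}}(x_k, u_k)$, and \eqref{eq:PenFunctionDecay} becomes $a_{k + 1} \le a_k - \sigma \alpha_k^2 \rho_k^2 + \nu_k$. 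I would also use that $\Phi_c$ is nondecreasing in $c$, so that $\Phi_{\bar{c}} \ge \Phi_{c_0}$ is bounded below on $X_0$ by Assumption~\ref{assumpt:PenaltyFuncBoundedBelow}; denote this finite infimum by $\Phi_{\bar{c}}^*$.

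For the first assertion I would telescope the decay inequality from $k_0$ to $N$, using $a_{N + 1} \ge \Phi_{\bar{c}}^*$, to obtain
\[
  \sigma \sum_{k = k_0}^N \alpha_k^2 \rho_k^2 \le a_{k_0} - a_{N + 1} + \sum_{k = k_0}^N \nu_k \le a_{k_0} - \Phi_{\bar{c}}^* + \sum_{k = 0}^{\infty} \nu_k.
\]
The right-hand side is finite and independent of $N$, and all summands on the left are nonnegative, so letting $N \to \infty$ gives $\sum_{k = k_0}^{\infty} \alpha_k^2 \rho_k^2 < + \infty$; adjoining the finitely many initial terms yields statement~1.

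The delicate point is the third assertion, because \eqref{eq:PenFunctionDecay} is only a one-sided (nonmonotone) bound and does \emph{not} make $\{ a_k \}$ itself monotone. I would handle this with the standard tail-sum device: set $s_k = \sum_{j \ge k} \nu_j$, which is finite and tends to $0$ since $\sum_k \nu_k < + \infty$, and note that $\nu_k = s_k - s_{k + 1}$. Then $b_k := a_k + s_k$ satisfies
\[
  b_{k + 1} = a_{k + 1} + s_{k + 1} \le a_k + \nu_k + s_{k + 1} = a_k + s_k = b_k
\]
(using $a_{k + 1} \le a_k + \nu_k$), so $\{ b_k \}_{k \ge k_0}$ is nonincreasing; it is also bounded below by $\Phi_{\bar{c}}^*$. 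Hence $\{ b_k \}$ converges, and since $s_k \to 0$ the sequence $\{ a_k \} = \{ b_k - s_k \}$ converges as well, which is statement~3.

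Statement~2 then follows immediately: convergence of $\{ a_k \}$ forces $a_{k + 1} - a_k \to 0$, and for $k \ge k_0$ one has $c_{k + 1} = \bar{c}$, so that $\Phi_{c_{k + 1}}(x_{k + 1}, u_{k + 1}) - \Phi_{c_{k + 1}}(x_k, u_k) = a_{k + 1} - a_k \to 0$. I expect the only genuine obstacle to be the nonmonotonicity noted above; the monotone majorant $b_k = a_k + s_k$ is precisely the device that converts summability of $\{ \nu_k \}$ into actual convergence of the penalty values, after which statements~1 and~2 reduce to routine bookkeeping.
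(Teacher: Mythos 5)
Your proof is correct, and for statements~2 and~3 it takes a genuinely different route from the paper's. The reduction to a fixed penalty parameter $\bar{c}=c_{k_0}$ via Assumption~\ref{assumpt:PenaltyIncrease} and the telescoping argument for statement~1 coincide with the paper (the paper phrases statement~1 as a contradiction argument, but that is the same computation). Where you diverge is in handling the nonmonotonicity: the paper splits each increment $\Phi_{c_{k_0}}(x_{k+1},u_{k+1})-\Phi_{c_{k_0}}(x_k,u_k)$ into its negative part $\min\{0,\cdot\}$ and positive part $\max\{0,\cdot\}$, shows the negative parts form a summable series (again by boundedness below), deduces statement~2 from the squeeze between these summable sequences, and then establishes statement~3 separately via Cauchy's convergence criterion applied to both series. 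You instead pass directly to the monotone majorant $b_k=a_k+s_k$ with $s_k=\sum_{j\ge k}\nu_j$, obtain convergence of $\{a_k\}$ from monotone boundedness, and read off statement~2 as an immediate corollary of statement~3. Your argument is shorter and avoids the Cauchy-criterion bookkeeping; the paper's decomposition has the minor side benefit of exhibiting explicitly that the negative increments are summable, a fact it reuses implicitly in later arguments, but for the theorem as stated your tail-sum device is entirely adequate and all the hypotheses you invoke (boundedness below of $\Phi_{\bar c}$ via monotonicity in $c$ and Assumption~\ref{assumpt:PenaltyFuncBoundedBelow}, and $(x_k,u_k)\in X_0$) are available.
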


\begin{proof}
We split the proof of the theorem into three parts corresponding to each statement of the theorem. Before we proceed to
the proofs of the statements, let us first make an observation. Namely, from the fact that the sequence $\{ c_k \}$ is
bounded and Assumption~\ref{assumpt:PenaltyIncrease} it follows that the penalty parameter $c_k$ is increased only a
finite number of times, that is, there exists $k_0 \in \mathbb{N}$ such that $c_k = c_{k_0}$ for all $k \ge k_0$.

\textbf{Part 1.} By Lemma~\ref{lem:PenFunctionDecay} for any $k \in \mathbb{N}$ one has 
$\Phi_{c_{k + 1}}(x_{k + 1}, u_{k + 1}) - \Phi_{c_{k + 1}}(x_k, u_k) \le - \sigma \alpha_k^2 \rho_k^2 + \nu_k$.
Consequently, for any $s > k_0$ one has
\[
  \Phi_{c_{k_0}}(x_s, u_s) - \Phi_{c_{k_0}}(x_{k_0}, u_{k_0}) 
  = \sum_{k = k_0}^{s - 1} \big( \Phi_{c_{k_0}}(x_{k + 1}, u_{k + 1}) - \Phi_{c_{k_0}}(x_k, u_k) \big) 
  \le - \sum_{k = k_0}^{s - 1} \alpha_k^2 \rho_k^2 + \sum_{k = k_0}^{s - 1} \nu_k.
\]
If $\sum_{k = 0}^{\infty} \alpha_k^2 \rho_k^2 = + \infty$, then from the inequality above and the assumption
$\sum_{k = 0}^{\infty} \nu_k < + \infty$ it follows that $\Phi_{c_{k_0}}(x_s, u_s) \to - \infty$ as $s \to \infty$,
which contradicts Assumption~\ref{assumpt:PenaltyFuncBoundedBelow} on the boundedness below of the penalty function
$\Phi_c$ for $c = c_0$. Therefore, the first statement of the theorem holds true.

\textbf{Part 2.} Note that the difference $\Phi_{c_{k + 1}}(x_{k + 1}, u_{k + 1}) - \Phi_{c_{k + 1}}(x_k, u_k)$ is
bounded above by $- \alpha_k^2 \rho_k^2 + \nu_k$ by Lemma~\ref{lem:PenFunctionDecay}, and this upper bound converges to
zero as $k \to \infty$. Let us show that this difference is also bounded below and the lower bound converges to zero as 
$k \to \infty$ as well. Then one can conclude that 
$|\Phi_{c_{k + 1}}(x_{k + 1}, u_{k + 1}) - \Phi_{c_{k + 1}}(x_k, u_k)| \to 0$ as $k \to \infty$.

Indeed, for any $s > k_0$ one has
\begin{multline*}
  \Phi_{c_{k_0}}(x_s, u_s) - \Phi_{c_{k_0}}(x_{k_0}, u_{k_0}) 
  = \sum_{k = k_0}^{s - 1} \big( \Phi_{c_{k_0}}(x_{k + 1}, u_{k + 1}) - \Phi_{c_{k_0}}(x_k, u_k) \big)
  \\
  = \sum_{k = k_0}^{s - 1} \min\big\{ 0, \Phi_{c_{k_0}}(x_{k + 1}, u_{k + 1}) - \Phi_{c_{k_0}}(x_k, u_k) \big\}
  + \sum_{k = k_0}^{s - 1} \max\big\{ 0, \Phi_{c_{k_0}}(x_{k + 1}, u_{k + 1}) - \Phi_{c_{k_0}}(x_k, u_k) \big\}
  \\
  \le \sum_{k = k_0}^{s - 1} \min\big\{ 0, \Phi_{c_{k_0}}(x_{k + 1}, u_{k + 1}) - \Phi_{c_{k_0}}(x_k, u_k) \big\}
  + \sum_{k = k_0}^{\infty} \nu_k.
\end{multline*}
If $\sum_{k = k_0}^{\infty} \min\big\{ 0, \Phi_{c_{k_0}}(x_{k + 1}, u_{k + 1}) - \Phi_{c_{k_0}}(x_k, u_k) \big\} =
- \infty$, then the inequality above and the assumption $\sum_{k = 0}^{\infty} \nu_k < + \infty$ imply that
$\Phi_{c_{k_0}}(x_s, u_s) \to - \infty$ as $s \to \infty$, which once again contradicts
Assumption~\ref{assumpt:PenaltyFuncBoundedBelow}. Consequently, the sequence 
$\{ \min\big\{ 0, \Phi_{c_{k_0}}(x_{k + 1}, u_{k + 1}) - \Phi_{c_{k_0}}(x_k, u_k) \big\} \}$ is summable, which implies
that it converges to zero as $k \to \infty$. Hence taking into account the inequalities
\[
  \min\big\{ 0, \Phi_{c_{k_0}}(x_{k + 1}, u_{k + 1}) - \Phi_{c_{k_0}}(x_k, u_k) \big\} 
  \le \Phi_{c_{k + 1}}(x_{k + 1}, u_{k + 1}) - \Phi_{c_{k + 1}}(x_k, u_k) 
  \le - \alpha_k^2 \rho_k^2 + \nu_k \quad \forall k \ge k_0
\]
one can conclude that the second statement of the theorem holds true.

\textbf{Part 3.} For any $s_1 > s_2 > k_0$ one has
\begin{multline*}
  \big| \Phi_{c_{k_0}}(x_{s_2}, u_{s_2}) - \Phi_{c_{k_0}}(x_{s_1}, u_{s_1}) \big|
  = \Big| \sum_{k = s_1}^{s_2 - 1} \Phi_{c_{k_0}}(x_{k + 1}, u_{k + 1}) - \Phi_{c_{k_0}}(x_k, u_k) \Big| 
  \\
  = \Big| \sum_{k = s_1}^{s_2 - 1} \min\big\{ 0, \Phi_{c_{k_0}}(x_{k + 1}, u_{k + 1}) - \Phi_{c_{k_0}}(x_k, u_k) \big\}
  + \sum_{k = s_1}^{s_2 - 1} \max\big\{ 0, \Phi_{c_{k_0}}(x_{k + 1}, u_{k + 1}) - \Phi_{c_{k_0}}(x_k, u_k) \big\} \Big|
  \\
  \le \sum_{k = s_1}^{s_2 - 1} 
  \Big| \min\big\{ 0, \Phi_{c_{k_0}}(x_{k + 1}, u_{k + 1}) - \Phi_{c_{k_0}}(x_k, u_k) \big\} \Big|
  + \sum_{k = s_1}^{s_2 - 1} \max\big\{ 0, \Phi_{c_{k_0}}(x_{k + 1}, u_{k + 1}) - \Phi_{c_{k_0}}(x_k, u_k) \big\}
  \\
  \le \Big| \sum_{k = s_1}^{s_2 - 1} 
  \min\big\{ 0, \Phi_{c_{k_0}}(x_{k + 1}, u_{k + 1}) - \Phi_{c_{k_0}}(x_k, u_k) \big\} \Big|
  + \sum_{k = s_1}^{s_2 - 1} \nu_k
\end{multline*}
(here we used the fact that $\sum |a_k| = |\sum a_k|$, if $a_k \le 0$ for all $k$). By the assumption of the theorem and
the second part of the proof the series
\[
  \sum_{k = 0}^{\infty} \nu_k, \quad 
  \sum_{k = 0}^{\infty} \min\big\{ 0, \Phi_{c_{k_0}}(x_{k + 1}, u_{k + 1}) - \Phi_{c_{k_0}}(x_k, u_k) \big\}
\]
are summable. Therefore by Cauchy's convergence test for any $\varepsilon > 0$ there exists $N \in \mathbb{N}$ such that
for all $s_2 > s_1 \ge N$ one has
\[
  \sum_{k = s_1}^{s_2 - 1} \nu_k < \frac{\varepsilon}{2}, \quad
  \Big| \sum_{k = s_1}^{s_2 - 1} 
  \min\big\{ 0, \Phi_{c_{k_0}}(x_{k + 1}, u_{k + 1}) - \Phi_{c_{k_0}}(x_k, u_k) \big\} \Big| < \frac{\varepsilon}{2}.
\]
Hence for any $s_2 > s_1 > \max\{ N, k_0 \}$ one has 
$|\Phi_{c_{k_0}}(x_{s_2}, u_{s_2}) - \Phi_{c_{k_0}}(x_{s_1}, u_{s_1})| < \varepsilon$. 
Thus, $\{ \Phi_{c_k}(x_k, u_k) \}$ is a Cauchy sequence, which implies that it converges.
\end{proof}

\begin{corollary} \label{crlr:StoppingCriteria}
Under the assumptions of the previous theorem for any $k \in \mathbb{N}$ one has
\begin{equation} \label{eq:MajorantDiffEstimate}
  0 \le Q_{c_{k + 1}}(x_k, u_k; x_k, u_k; V_k) - Q_{c_{k + 1}}(x_k[c_{k + 1}], u_k[c_{k + 1}]; x_k, u_k; V_k)
  \le \Phi_{c_{k + 1}}(x_k, u_k) - \Phi_{c_{k + 1}}(x_{k + 1}, u_{k + 1}) + \nu_k
\end{equation}
and $Q_{c_{k + 1}}(x_k, u_k; x_k, u_k; V_k) - Q_{c_{k + 1}}(x_k[c_{k + 1}], u_k[c_{k + 1}]; x_k, u_k; V_k) \to 0$ 
as $k \to \infty$.
\end{corollary}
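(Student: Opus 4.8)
The plan is to derive the two displayed inequalities in \eqref{eq:MajorantDiffEstimate} by chaining three facts already in hand---the global convex majorant relation from the proof of Proposition~\ref{prp:OptCond}, Assumption~\ref{assumpt:SubOptSolCorrectness}, and the line search inequality \eqref{eq:LineSearchIneq}---and then to obtain the convergence to zero by squeezing against the second statement of Theorem~\ref{thrm:StoppingCriteria1}. Throughout I would specialise the majorant relation to the base point $(x_k, u_k)$ and subgradient collection $V = V_k$.

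First I would record the exact link between $Q_{c_{k+1}}(\cdot; x_k, u_k; V_k)$ and $\Phi_{c_{k+1}}$. Abbreviating the constant correction term by
\[
  \Delta_k := \int_0^T \big( H_0(x_k(t), u_k(t), t) - \langle V_{k0}(t), (x_k(t), u_k(t)) \rangle \big)\,dt + h_0(x_k(0), x_k(T)) - \langle v_{k0}, (x_k(0), x_k(T)) \rangle,
\]
inequality \eqref{eq:GlobalConvexMajorant} reads $Q_{c_{k+1}}(x, u; x_k, u_k; V_k) - \Delta_k \ge \Phi_{c_{k+1}}(x, u)$ for all $(x,u) \in X$, with equality at the base point. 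Hence $Q_{c_{k+1}}(x_k, u_k; x_k, u_k; V_k) = \Phi_{c_{k+1}}(x_k, u_k) + \Delta_k$, whereas at the new point only $Q_{c_{k+1}}(x_k[c_{k+1}], u_k[c_{k+1}]; x_k, u_k; V_k) \ge \Phi_{c_{k+1}}(x_k[c_{k+1}], u_k[c_{k+1}]) + \Delta_k$ holds. Subtracting makes the unknown $\Delta_k$ cancel and gives
\[
  Q_{c_{k+1}}(x_k, u_k; x_k, u_k; V_k) - Q_{c_{k+1}}(x_k[c_{k+1}], u_k[c_{k+1}]; x_k, u_k; V_k) \le \Phi_{c_{k+1}}(x_k, u_k) - \Phi_{c_{k+1}}(x_k[c_{k+1}], u_k[c_{k+1}]).
\]
The lower bound $\ge 0$ in \eqref{eq:MajorantDiffEstimate} is immediate from Assumption~\ref{assumpt:SubOptSolCorrectness}.

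Next I would replace $\Phi_{c_{k+1}}(x_k[c_{k+1}], u_k[c_{k+1}])$ by $\Phi_{c_{k+1}}(x_{k+1}, u_{k+1})$ through the line search. Since $(x_{k+1}, u_{k+1})$ is obtained from $(x_k[c_{k+1}], u_k[c_{k+1}])$ by the accepted step with $\alpha_k = \zeta^{j_k}\overline{\alpha}_k$, inequality \eqref{eq:LineSearchIneq} yields $\Phi_{c_{k+1}}(x_{k+1}, u_{k+1}) - \Phi_{c_{k+1}}(x_k[c_{k+1}], u_k[c_{k+1}]) \le -\sigma \alpha_k^2 \rho_k^2 + \nu_k \le \nu_k$, so that $-\Phi_{c_{k+1}}(x_k[c_{k+1}], u_k[c_{k+1}]) \le -\Phi_{c_{k+1}}(x_{k+1}, u_{k+1}) + \nu_k$. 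Substituting this into the displayed bound produces precisely the right-hand inequality of \eqref{eq:MajorantDiffEstimate}. For the limit, the left-most quantity is squeezed between $0$ and $\Phi_{c_{k+1}}(x_k, u_k) - \Phi_{c_{k+1}}(x_{k+1}, u_{k+1}) + \nu_k$; the latter tends to $0$ because $|\Phi_{c_{k+1}}(x_{k+1}, u_{k+1}) - \Phi_{c_{k+1}}(x_k, u_k)| \to 0$ by Theorem~\ref{thrm:StoppingCriteria1} and $\nu_k \to 0$ as a consequence of $\sum_k \nu_k < +\infty$, so the squeeze theorem finishes the argument.

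I expect the only delicate point to be the bookkeeping of the correction term $\Delta_k$: one must invoke \eqref{eq:GlobalConvexMajorant} as an \emph{equality} at the base point $(x_k, u_k)$ but only as an \emph{inequality} at $(x_k[c_{k+1}], u_k[c_{k+1}])$, so that $\Delta_k$ cancels with the correct sign and the resulting estimate points in the intended direction. Once the inequality signs are kept consistent there, the remainder is a routine combination of Assumption~\ref{assumpt:SubOptSolCorrectness}, the line search step, and Theorem~\ref{thrm:StoppingCriteria1}.
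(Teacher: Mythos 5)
Your proposal is correct and follows essentially the same route as the paper's proof: the lower bound from Assumption~\ref{assumpt:SubOptSolCorrectness}, the cancellation of the correction term $\Delta_k$ (the paper's expression \eqref{eq:CostFunctionCorrection}) using the majorant equality at the base point and inequality at the new point, the line search bound $\Phi_{c_{k+1}}(x_{k+1},u_{k+1}) \le \Phi_{c_{k+1}}(x_k[c_{k+1}],u_k[c_{k+1}]) + \nu_k$, and the squeeze via the second statement of Theorem~\ref{thrm:StoppingCriteria1} together with $\nu_k \to 0$.
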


\begin{proof}
By Assumption~\ref{assumpt:SubOptSolCorrectness} one has
\[
  Q_{c_{k + 1}}(x_k[c_{k + 1}], u_k[c_{k + 1}]; x_k, u_k; V_k) \le Q_{c_{k + 1}}(x_k, u_k; x_k, u_k; V_k)
\]
for any $k \in \mathbb{N}$. Adding and subtracting expression \eqref{eq:CostFunctionCorrection} and applying the
definition of subgradient (see the proof of Lemma~\ref{lem:PenFunctionDecay}) one obtains that
\[ 
  0 \le Q_{c_{k + 1}}(x_k, u_k; x_k, u_k; V_k) - Q_{c_{k + 1}}(x_k[c_{k + 1}], u_k[c_{k + 1}]; x_k, u_k; V_k) 
  \le \Phi_{c_{k + 1}}(x_k, u_k) - \Phi_{c_{k + 1}}(x_k[c_{k + 1}], u_k[c_{k + 1}]) 
\]
Hence taking into account the fact that according to Step~5 of Algorithmic Pattern~\ref{alg:Boosted_SEP_DCA} one has
\[
  \Phi_{c_{k + 1}}(x_{k + 1}, u_{k + 1}) \le \Phi_{c_{k + 1}}(x_k[c_{k + 1}], u_k[c_{k + 1}]) + \nu_k
\]
one gets that inequality \eqref{eq:MajorantDiffEstimate} holds true. The fact that the corresponding difference
converges to zero follows directly from this inequality and the second statement of Theorem~\ref{thrm:StoppingCriteria1}
\end{proof}

\begin{remark}
From the corollary above it follows that if the sequence of penalty parameters $\{ c_k \}$ is bounded, 
$\sum_{k = 0}^{\infty} \nu_k < + \infty$, and $\varphi(x_k, u_k) \to 0$, then Algorithmic
Pattern~\ref{alg:Boosted_SEP_DCA} necessarily terminates after a finite number of iterations and returns a
generalised $(\varepsilon_k + \varepsilon_f + \nu_k)$-critical point, if the stopping criterion
\eqref{eq:StoppingCriterion1} is used, and a geneeralised $(\varepsilon_f + \varepsilon_k)$-critical point, if the
stopping criterion \eqref{eq:StoppingCriterion2} is employed. Note, however, that if the problem is in some sense
degenerate and either $\liminf_{k \to \infty} \varphi(x_k, u_k) > 0$ or one has $c_k \to + \infty$, these stopping
criteria cannot not be satisfied. Therefore, a practical implementation of Algorithmic Pattern~\ref{alg:Boosted_SEP_DCA}
must include corresponding safeguards that take into account such degenerate cases (see
Remark~\ref{rmrk:MaxPenParamSafeguard}).
\end{remark}

Let us also consider the strongly convex case.

\begin{theorem} \label{thrm:StoppingCriteria2}
Let the sequence $\{ (x_k, u_k) \}$ be generated by Algorithmic Pattern~\ref{alg:Boosted_SEP_DCA}, the sequence of
penalty parameters $\{ c_k \}$ be bounded, and $\sum_{k = 0}^{\infty} \nu_k < + \infty$. Suppose also that the function
$H_0(x, u, t)$ is strongly convex in $(x, u)$ with modulus $\mu > 0$ for a.e. $t \in [0, T]$. 
Then $\sum_{k = 0}^{\infty} \| (x_{k + 1} - x_k, u_{k + 1} - u_k) \|_2^2 < + \infty$. In particular, 
$\| x_{k + 1} - x_k \|_2 \to 0$ and $\| u_{k + 1} - u_k \|_2 \to 0$ as $k \to \infty$.
\end{theorem}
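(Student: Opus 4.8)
The plan is to start from the strongly convex decay estimate \eqref{eq:PenFunctionDecay_StronglyConvex} of Lemma~\ref{lem:PenFunctionDecay} and to convert the summability it yields for $\rho_k^2$ and $\alpha_k^2 \rho_k^2$ into the desired summability for $\| (x_{k+1} - x_k, u_{k+1} - u_k) \|_2^2$. The bridge between these quantities is the explicit form of the update on Step~5 of Algorithmic Pattern~\ref{alg:Boosted_SEP_DCA}, which expresses the iterate difference as a scalar multiple of the direction $(x_k[c_{k+1}] - x_k, u_k[c_{k+1}] - u_k)$, whose $L^2$-norm is exactly $\rho_k$.

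First I would exploit the boundedness of $\{ c_k \}$. By Assumption~\ref{assumpt:PenaltyIncrease} boundedness forces the penalty parameter to be increased only finitely often, so there exists $k_0 \in \mathbb{N}$ with $c_k = c_{k_0}$ for all $k \ge k_0$ (the same observation as in the proof of Theorem~\ref{thrm:StoppingCriteria1}). Summing inequality \eqref{eq:PenFunctionDecay_StronglyConvex} telescopically from $k_0$ to $s - 1$ gives
\[
  \sum_{k = k_0}^{s - 1} \left( \frac{\mu}{2} + \sigma \alpha_k^2 \right) \rho_k^2
  \le \Phi_{c_{k_0}}(x_{k_0}, u_{k_0}) - \Phi_{c_{k_0}}(x_s, u_s) + \sum_{k = k_0}^{s - 1} \nu_k.
\]
Since $\Phi_{c_{k_0}}$ is bounded below on $X_0$ by Assumption~\ref{assumpt:PenaltyFuncBoundedBelow} and $\sum_{k = 0}^{\infty} \nu_k < + \infty$ by hypothesis, letting $s \to \infty$ yields $\sum_{k = 0}^{\infty} (\mu/2 + \sigma \alpha_k^2) \rho_k^2 < + \infty$. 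As $\mu/2 > 0$ is a fixed constant and every term is nonnegative, this single estimate delivers both $\sum_{k = 0}^{\infty} \rho_k^2 < + \infty$ and $\sum_{k = 0}^{\infty} \alpha_k^2 \rho_k^2 < + \infty$ simultaneously.

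It remains to relate the iterate difference to $\rho_k$ and $\alpha_k$. By the definition of $(x_{k+1}, u_{k+1})$ on Step~5 one has $(x_{k+1} - x_k, u_{k+1} - u_k) = (1 + \alpha_k)(x_k[c_{k+1}] - x_k, u_k[c_{k+1}] - u_k)$, whence $\| (x_{k+1} - x_k, u_{k+1} - u_k) \|_2^2 = (1 + \alpha_k)^2 \rho_k^2$. The one point that needs care is that the trial step $\overline{\alpha}_k$, and hence $\alpha_k$, need not be bounded under the self-adaptive strategy of Subsection~\ref{subsect:TrialStepSize}, so I cannot simply absorb $(1 + \alpha_k)^2$ into a constant. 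Instead I would use the elementary inequality $(1 + \alpha_k)^2 \le 2(1 + \alpha_k^2)$, valid for every $\alpha_k \ge 0$ since it reduces to $(1 - \alpha_k)^2 \ge 0$, to obtain $\| (x_{k+1} - x_k, u_{k+1} - u_k) \|_2^2 \le 2 \rho_k^2 + 2 \alpha_k^2 \rho_k^2$. Summing over $k$ and invoking the two series established above gives $\sum_{k = 0}^{\infty} \| (x_{k+1} - x_k, u_{k+1} - u_k) \|_2^2 < + \infty$. The ``in particular'' claim is then immediate: because $\| (x, u) \|_2^2 = \| x \|_2^2 + \| u \|_2^2$, each of $\sum_k \| x_{k+1} - x_k \|_2^2$ and $\sum_k \| u_{k+1} - u_k \|_2^2$ is finite, so both $\| x_{k+1} - x_k \|_2$ and $\| u_{k+1} - u_k \|_2$ tend to zero. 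The only genuine obstacle is the possible unboundedness of $\alpha_k$, which the quadratic inequality neatly sidesteps; everything else is a routine telescoping argument mirroring the proof of Theorem~\ref{thrm:StoppingCriteria1}.
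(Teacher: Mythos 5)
Your proof is correct and follows essentially the same route as the paper's: telescoping the strongly convex decay estimate \eqref{eq:PenFunctionDecay_StronglyConvex} after the penalty parameter stabilises, obtaining summability of $(\mu/2 + \sigma\alpha_k^2)\rho_k^2$, and then passing to the iterate differences via $(1+\alpha_k)^2 \le 2 + 2\alpha_k^2$. Your explicit remark about why $\alpha_k$ cannot simply be absorbed into a constant is a nice clarification, but the argument is the same.
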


\begin{proof}
As was noted in the proof of Theorem~\ref{thrm:StoppingCriteria1}, from the assumption on the boundedness of the penalty
parameter it follows that there exists $k_0 \in \mathbb{N}$ such that $c_k = c_{k_0}$ for all $k \ge k_0$.

By the second part of Lemma~\ref{lem:PenFunctionDecay} for any $s > k_0$ one has
\[
  \Phi_{c_{k_0}}(x_s, u_s) - \Phi_{c_{k_0}}(x_{k_0}, u_{k_0}) 
  = \sum_{k = k_0}^{s - 1} \big( \Phi_{c_{k_0}}(x_{k + 1}, u_{k + 1}) - \Phi_{c_{k_0}}(x_k, u_k) \big) 
  \le - \sum_{k = k_0}^{s - 1} \left( \frac{\mu}{2} + \alpha_k^2 \right) \rho_k^2 
  + \sum_{k = k_0}^{s - 1} \nu_k.
\]
Now by applying Assumption~\ref{assumpt:PenaltyFuncBoundedBelow} and arguing in the same way as in the proof of 
the first part of Theorem~\ref{thrm:StoppingCriteria1} one can readily check that
\begin{equation} \label{eq:DecreaseEstimSeries}
  \sum_{k = 0}^{\infty} \left( \frac{\mu}{2} + \alpha_k^2 \right) \rho_k^2 < + \infty.
\end{equation}
By definition one has $(x_{k + 1} - x_k, u_{k + 1} - u_k) = (1 + \alpha_k) (x_k[c_{k + 1}] - x_k, u_k[c_{k + 1}] - u_k)$
for any $k \in \mathbb{N}$ (see Step~5 of Algorithmic Pattern~\ref{alg:Boosted_SEP_DCA}). Therefore for any 
$N \in \mathbb{N}$ one gets
\[
  \sum_{k = 0}^N \| (x_{k + 1} - x_k, u_{k + 1} - u_k) \|_2^2 
  = \sum_{k = 0}^N (1 + \alpha_k)^2 \| (x_k[c_{k + 1}] - x_k, u_k[c_{k + 1}] - u_k) \|_2^2
  = \sum_{k = 0}^N (1 + \alpha_k)^2 \rho_k^2.
\]
Applying the obvious inequality $(1 + \alpha_k)^2 \le 2 + 2 \alpha_k^2$ one obtains
\[
  \sum_{k = 0}^N \| (x_{k + 1} - x_k, u_{k + 1} - u_k) \|_2^2 \le 2 \sum_{k = 0}^N \rho_k^2 + 2 \sum_{k = 0}^N
\alpha_k^2 \rho_k^2.
\]
Hence taking into account \eqref{eq:DecreaseEstimSeries} one can conclude that the series 
$\sum_{k = 0}^{\infty} \| (x_{k + 1} - x_k, u_{k + 1} - u_k) \|_2^2$ is summable, which completes the proof of the
theorem.
\end{proof}

\subsection{Convergence to critical points}
\label{subsect:GlobalConvergence}

Let us now study convergence of sequences generated by B-STEP-DCA to generalised critical points of the problem
$(\mathcal{P})$. Namely, let us prove that all limit points of the sequence generated by this method (if such points
exist) are generalised critical points of the problem $(\mathcal{P})$. We will prove this result under an additional
differentiability assumption on the mapping $F(x, u, t)$ and the mixed state-control constraints. For the sake of
completeness, we present the main part of the proof without this technical assumption and then explicitly point out when
and why this assumption is needed.

Recall that B-STEP-DCA is a local search method for the problem $(\mathcal{P})$ in 
the space $X = W^{1, \infty}([0, T]; \mathbb{R}^n) \times L^{\infty}([0, T], \mathbb{R}^m)$. Nevertheless, below we
consider limit points $\{ (x_k, u_k) \}$ of the sequence generated by this method in the topology of the space 
$Y = W^{1, 1}([0, T]; \mathbb{R}^n) \times L^1([0, T]; \mathbb{R}^m)$ that is more natural from the point of view of
convergence analysis. Moreover, since this topology is weaker than the topology of the space $X$, limit points in
this topology exist under less restrictive assumption on the sequence $\{ (x_k, u_k) \}$ (that is, in a more general
case). To make the use of such topology consistent with the problem formulation we will assume that limit points belong
to $X$. Note that all limit point of the sequence $\{ (x_k, u_k) \}$ in the topology of the space $Y$ belong to $X$,
provided the sequence $\{ (x_k, u_k) \}$ is bounded in $X$. Moreover, they also belong to the set $X_0$ due to our
assumption that the set $X_0 \subset X$ is closed in the topology of the space $Y$.

\begin{theorem} \label{thrm:ConvergenceToCriticalPoints}
Let the sequence $\{ (x_k, u_k) \}$ be generated by Algorithmic Pattern~\ref{alg:Boosted_SEP_DCA} and suppose that the
following assumptions hold true:
\begin{enumerate}
\item{the sequences of penalty parameters $\{ c_k \}$ and optimality tolerances $\{ \varepsilon_k \}$ are bounded;}

\item{$\sum_{k = 0}^{\infty} \nu_k < + \infty$ and $\limsup_{k \to \infty} \varepsilon_k = \varepsilon^*$;}

\item{the sequence of controls $\{ u_k \}$ is bounded in $L^{\infty}([0, T]; \mathbb{R}^m)$;}

\item{the functions $G_i(x, u, t)$ and $H_i(x, u, t)$, $i \in \{ 1, \ldots, n \}$, and $q_s(x, u, t)$, 
$s \in \mathcal{M}$, are differentiable in $(x, u)$ for a.e. $t \in [0, T]$ and their partial derivatives in $x$ and $u$
are Carath\'{e}odory mappings.
}
\end{enumerate}
Then any limit point 
$(x_*, u_*) \in X$ of the sequence $\{ (x_k, u_k) \}$ in the Banach space 
$W^{1, 1}([0, T]; \mathbb{R}^n) \times L^1([0, T], \mathbb{R}^m)$ is a generalised $\varepsilon_*$-critical point
of the problem $(\mathcal{P})$. Furthermore, if $(x_*, u_*)$ is feasible for this problem, then it is an
$\varepsilon_*$-critical point of the problem $(\mathcal{P})$.
\end{theorem}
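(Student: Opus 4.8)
The plan is to freeze the penalty parameter, extract a suitable subsequence, build a limiting collection of subgradients at $(x_*, u_*)$, and pass to the limit in the $\varepsilon_k$-optimality inequality for subproblem \eqref{subprob:MainStep}. Since $\{c_k\}$ is bounded, Assumption~\ref{assumpt:PenaltyIncrease} allows $c_k$ to be increased only finitely often, so there exist $k_0$ and $\bar c > 0$ with $c_k = \bar c$ for all $k \ge k_0$; the criticality will be proved for this $\bar c$. I would fix a subsequence (not relabelled) with $(x_k, u_k) \to (x_*, u_*)$ in $Y = W^{1,1}([0,T]; \mathbb{R}^n) \times L^1([0,T]; \mathbb{R}^m)$ and refine it so that $x_k(t) \to x_*(t)$, $\dot x_k(t) \to \dot x_*(t)$, $u_k(t) \to u_*(t)$ for a.e. $t$, while $(x_k(0), x_k(T)) \to (x_*(0), x_*(T))$ by the embedding $W^{1,1} \hookrightarrow C$ used in Remark~\ref{rmrk:PenFuncContinuity}. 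Note that $W^{1,1}$-convergence makes $\{x_k\}$ bounded in $C[0,T]$, which together with the $L^\infty$-bound on $\{u_k\}$ gives uniform sup-norm bounds that will feed the growth condition of Assumption~\ref{assumpt:ContDiff} to supply dominating functions below.

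Constructing the limiting collection $V_*$ of \eqref{eq:SubgradientCollection} is the technical core, and it splits by how each subgradient enters $Q_{\bar c}$ in \eqref{eq:ExPenConvexMajorant}. The subgradients $W_{ki}, V_{ki}$ (dynamics) and $z_{ks}$ (state constraints) appear inside nonlinear $\max$-terms; here I would use the differentiability hypothesis, which identifies them with the Carath\'{e}odory gradients $\nabla G_i, \nabla H_i, \nabla q_s$ and hence yields pointwise a.e. convergence to the gradients at $(x_*, u_*)$ — precisely what lets the limit pass through the $\max$ by dominated convergence. The finite-dimensional endpoint subgradients $v_{ki}, w_{kj}, v_{k0}$ converge along a further subsequence to limits that are valid subgradients at $(x_*(0), x_*(T))$ by outer semicontinuity of the convex subdifferential. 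The \emph{main obstacle} is $V_{k0}(\cdot) \in \partial_{x,u} H_0(x_k(\cdot), u_k(\cdot), \cdot)$, since $H_0$ is not assumed differentiable: it is only $L^\infty$-bounded, so I can merely extract a weak-$*$ limit $V_{*0}$, and I must still show $V_{*0}(t) \in \partial_{x,u} H_0(x_*(t), u_*(t), t)$ for a.e. $t$. The plan for this is to write the subgradient inequality for $V_{k0}(t)$ against an arbitrary fixed constant vector, integrate over an arbitrary measurable set, pass to the limit (pairing the weak-$*$ convergence of $V_{k0}$ with the strong $L^1$-convergence of $(x_k, u_k)$, and using dominated convergence on the Carath\'{e}odory values of $H_0$), localise to the pointwise inequality a.e., and finally quantify over a countable dense set of test vectors using continuity of $H_0$ in $(x,u)$.

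With $V_*$ available I would pass to the limit, for each fixed test pair $(x, u) \in X_0$, in the inequality $Q_{\bar c}(x_k[\bar c], u_k[\bar c]; x_k, u_k; V_k) \le Q_{\bar c}(x, u; x_k, u_k; V_k) + \varepsilon_k$ furnished by $\varepsilon_k$-optimality. On the left, Assumption~\ref{assumpt:SubOptSolCorrectness} and Corollary~\ref{crlr:StoppingCriteria} show that $Q_{\bar c}(x_k[\bar c], u_k[\bar c]; x_k, u_k; V_k)$ exceeds $Q_{\bar c}(x_k, u_k; x_k, u_k; V_k)$ by a quantity tending to $0$; the majorant identity \eqref{eq:GlobalConvexMajorant} rewrites the latter as $\Phi_{\bar c}(x_k, u_k)$ plus the correction term built from $V_{k0}, v_{k0}$, and continuity of $\Phi_{\bar c}$ along $Y$-convergent uniformly bounded sequences (Remark~\ref{rmrk:PenFuncContinuity}) together with convergence of the correction (again by the weak-$*$/strong pairing) sends the left-hand side to $Q_{\bar c}(x_*, u_*; x_*, u_*; V_*)$. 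On the right, I would combine dominated convergence on every $\max$-term (pointwise convergence of the base-point values and of the differentiable subgradients, domination via the growth condition) with the weak-$*$ convergence of $V_{k0}$ tested against the fixed $L^1$-function $(x, u)$, and use $\limsup_k \varepsilon_k = \varepsilon_*$. This gives $Q_{\bar c}(x_*, u_*; x_*, u_*; V_*) \le Q_{\bar c}(x, u; x_*, u_*; V_*) + \varepsilon_*$ for every $(x, u) \in X_0$; taking the infimum over $(x, u)$ is exactly generalised $\varepsilon_*$-criticality in the sense of Definition~\ref{def:EpsilonCriticality}.

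For the feasible case I would note that if $\varphi(x_*, u_*) = 0$ then every $\max$-term of $Q_c(\cdot; x_*, u_*; V_*)$ vanishes at the base point, so $Q_c(x_*, u_*; x_*, u_*; V_*)$ is independent of $c$, whereas for any $c \ge \bar c$ one has $Q_c(x, u; x_*, u_*; V_*) \ge Q_{\bar c}(x, u; x_*, u_*; V_*)$ pointwise, because each $\max$-term is nonnegative — trivially for the terms containing a $0$, and because it dominates the (nonnegative) average of its two arguments for the two-sided terms, that average being nonnegative by the subgradient inequalities. Hence the $\varepsilon_*$-optimality obtained for $\bar c$ persists for all $c \ge \bar c$, upgrading $(x_*, u_*)$ from a generalised $\varepsilon_*$-critical point to an $\varepsilon_*$-critical point, which completes the plan.
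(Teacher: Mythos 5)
Your proposal is correct, and its skeleton coincides with the paper's: extract a subsequence converging in $Y$, upgrade to uniform convergence of the trajectories and a.e.\ convergence of the controls, build a limiting collection $V_*$ (pointwise convergence of gradients plus dominated convergence for the subgradients sitting inside the $\max$-terms, which is exactly where the differentiability hypothesis is consumed; weak compactness for $V_{k0}$; outer semicontinuity for the endpoint subgradients), and then establish convergence of the relevant values of $Q_{\bar c}$. You deviate from the paper in three local sub-arguments, all legitimately. First, to show $V_{*0}(t)\in\partial_{x,u}H_0(x_*(t),u_*(t),t)$ the paper convexifies via Mazur's lemma and passes to an a.e.\ convergent subsequence of the convex combinations, whereas you integrate the subgradient inequality over arbitrary measurable sets, pair weak-$*$ convergence of $V_{k0}$ with strong $L^1$ convergence of $(x_k,u_k)$, localise, and quantify over a countable dense set of test vectors; both are standard and equally rigorous. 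Second, the final criticality step: the paper argues by contradiction, showing that failure of $\varepsilon_*$-criticality would force $\Phi_{c_*}(x_{k_l},u_{k_l})\to-\infty$ via Lemma~\ref{lem:PenFunctionDecay}, contradicting Assumption~\ref{assumpt:PenaltyFuncBoundedBelow}; you instead pass directly to the limit in the $\varepsilon_k$-optimality inequality for \eqref{subprob:MainStep}, using Corollary~\ref{crlr:StoppingCriteria} to close the gap between $Q_{\bar c}$ at $(x_k,u_k)$ and at $(x_k[\bar c],u_k[\bar c])$. Your route is shorter and arguably more transparent, at the price of importing that corollary (its hypotheses -- bounded $\{c_k\}$ and $\sum\nu_k<+\infty$ -- are indeed assumed here); the paper's contradiction argument re-derives the same decrease mechanism from scratch. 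Third, for the feasible case the paper simply invokes the remark after Definition~\ref{def:EpsilonCriticality} that feasible generalised $\varepsilon$-critical points are $\varepsilon$-critical, while you supply the explicit justification (all $\max$-terms vanish at the base point and are nonnegative, so $c\mapsto Q_c(x,u;x_*,u_*;V_*)$ is nondecreasing while $Q_c(x_*,u_*;x_*,u_*;V_*)$ is constant); this is a welcome addition rather than a deviation. No gaps.
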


\begin{proof}
For the sake of convenience we divide the proof of the theorem into several steps.

\textbf{1. Modes of convergence.} Let $(x_*, u_*)$ be a limit point of the sequence $\{ (x_k, u_k) \}$ in 
$Y = W^{1, 1}([0, T]; \mathbb{R}^n) \times L^1([0, T], \mathbb{R}^m)$, that is, there exists a subsequence 
$\{ (x_{k_l}, u_{k_l}) \}$ converging to $(x_*, u_*)$ in $Y$. By the Sobolev imbedding theorem
(Adams\cite{Adams}, Thm.~5.4) the subsequence $\{ x_{k_l} \}$ converges to $x_*$ uniformly on $[0, T]$. Moreover,
replacing, if necessary, the sequence $\{ (x_{k_l}, u_{k_l}) \}$ by its subsequence one can suppose that $u_{k_l}$
converges to $u_*$ almost everywhere on $[0, T]$ (see, e.g. Bogachev\cite{Bogachev}, Thms.~2.2.5 and 4.5.4).

\textbf{2. Convergence of subgradients.} Since the sequence $x_{k_l}$ converges to $x_*$ uniformly on $[0, T]$, it is
bounded in $C([0, T]; \mathbb{R}^n)$. In particular, the sequences $\{ x_{k_l}(0) \}$ and $x_{k_l}(T) \}$ are bounded
and converge to $x_*(0)$ and $x_*(T)$ respectively. Therefore the corresponding sequences of subgradients 
$v_{i k_l} \in \partial f_i(x_{k_l}(0), x_{k_l}(T))$ and $w_{j k_l} \in \partial g_j(x_{k_l}(0), x_{k_l}(T))$, 
$l \in \mathbb{N}$, are bounded due to the local boundedness of the subdifferential mapping of a convex function
(Rockafellar\cite{Rockafellar}, Thm.~24.7). Consequently, replacing, if necessary, the sequence 
$\{ (x_{k_l}, u_{k_l}) \}$ by its subsequence one can suppose that the corresponding sequences of subgradients converge
to some $v_i^* \in \mathbb{R}^{2d}$ and $w_j^* \in \mathbb{R}^{2d}$. Moreover, $v_i^* \in \partial f_i(x_*(0), x_*(T))$
for any $i \in \{ 0 \} \cup \mathcal{I} \cup \mathcal{E}$, and $w_j^* \in \partial g_j(x_*(0), x_*(T))$ 
for any $j \in \mathcal{E}$ by virtue of the fact that the graph of the subdifferential mapping of a convex function is
closed (Rockafellar\cite{Rockafellar}, Thm.~24.4).

By our assumption the sequence $\{ u_k \}$ is uniformly essentially bounded. Then there exists $R > 0$ such that
$|x_{k_l}(t)| + |u_{k_l}(t)| \le R$ for a.e. $t \in [0, T]$, which by Assumption~\ref{assumpt:ContDiff} implies that
there exists $\gamma_R > 0$ such that
\begin{equation} \label{eq:FuncSubgrad_UniformlyBounded}
  |V_{k_l i}(t)| \le \gamma_R \quad \forall i \in \{ 0, 1, \ldots, n \}, \quad
  |W_{k_l i}(t)| \le \gamma_R \quad \forall i \in \{ 1, \ldots, n \}, \quad
  |z_{k_l s}(t)| \le \gamma_R \quad \forall s \in \mathcal{M}
\end{equation}
for a.e. $t \in [0, T]$. Therefore
\begin{gather*}
  \int_{|V_{k_l i}| > \gamma_R} |V_{k_l i}(t)| dt = 0 \quad \forall i \in \{ 0, 1, \ldots, n \}, \quad
  \int_{|W_{k_l i}| > \gamma_R} |W_{k_l i}(t)| dt = 0 \quad \forall i \in \{ 1, \ldots, n \},
  \\
  \int_{|z_{k_l s}| > \gamma_R} |z_{k_l s}(t)| dt = 0 \quad \forall s \in \mathcal{M},
\end{gather*}
which by (Bogachev\cite{Bogachev}, Thm.~4.7.20) implies that the sets $\{ V_{k_l i} \}_{l \in \mathbb{N}}$, 
$\{ W_{k_l i} \}_{l \in \mathbb{N}}$, and $\{ z_{k_l s} \}_{l \in \mathbb{N}}$ are weakly relatively compact in 
$L^1([0, T]; \mathbb{R}^{n + m})$. Hence by the Eberlein-\v{S}mulian theorem one can extract weakly convergent
subsequences from these sequences. Replacing, if necessary, the sequence $\{ (x_{k_l}, u_{k_l}) \}$ by its subsequence
one can suppose that the sequences of subgradients $\{ V_{k_l i} \}_{l \in \mathbb{N}}$, 
$\{ W_{k_l i} \}_{l \in \mathbb{N}}$, and $\{ z_{k_l s} \}_{l \in \mathbb{N}}$ weakly converge in 
$L^1([0, T], \mathbb{R}^{n + m})$ to some $V_i^*$, $i \in \{ 0, 1, \ldots, n \}$, $W_j^*$, $j \in \{ 1, \ldots, n \}$,
and $z_s^*$, $s \in \mathcal{M}$.

\textbf{3. Limiting subgradients.} Let us check that $V_i^*(t) \in \partial_{x, u} H_i(x_*(t), u_*(t), t)$ for a.e. 
$t \in [0, T]$ and all $i \in \{ 0, 1, \ldots, n \}$, $W_j^*(t) \in \partial_{x, u} G_j(x_*(t), u_*(t), t)$ for a.e. 
$t \in [0, T]$ and all $i \in \{ 1, \ldots, n \}$, and $z_s^*(t) \in \partial_{x, u} q_s(x_*(t), u_*(t), t)$ for a.e. 
$t \in [0, T]$ and all $s \in \mathcal{M}$. For the sake of brevity we will prove this statement only for $V_0^*$, since
the proof for all other subgradients is exactly the same.

Since the sequence $\{ V_{k_l 0} \}$ weakly converges to $V_0^*$, by Mazur's lemma (see, e.g. Ekeland and
Temam\cite{EkelandTemam}) for any $l \in \mathbb{N}$ there exist $N(l) \in \mathbb{N}$, $N(l) \ge l$ and a set of
nonnegative real numbers $\beta_r(l)$, $r \in \{ l, \ldots, N(l) \}$, such that $\sum_{r = l}^{N(l)} \beta_r(l) = 1$ and
the sequence $V'_l = \sum_{r = l}^{N(l)} \beta_r(l) V_{k_r 0}$ strongly converges to $V_0^*$ in 
$L^1([0, T], \mathbb{R}^{n + m})$.

Denote
\[
  x'_l = \sum_{r = l}^{N(l)} \beta_r(l) x_{k_r}, \quad 
  u'_l = \sum_{r = l}^{N(l)} \beta_r(l) u_{k_r} \quad \forall l \in \mathbb{N}.
\]
Recall that $x_{k_l}$ converges to $x_*$ uniformly on $[0, T]$. Therefore for any $\varepsilon > 0$ there exists 
$l_0 \in \mathbb{N}$ such that for any $l \ge l_0$ one has $\| x_{k_l} - x_* \|_{\infty} < \varepsilon$. Observe that
\[
  \| x'_l - x_* \|_{\infty} = \Big\| \sum_{r = l}^{N(l)} \beta_r(l) (x_{k_r} - x_*) \Big\|_{\infty}
  \le \sum_{r = l}^{N(l)} \beta_r(l) \| x_{k_r} - x_* \|_{\infty} 
  \le \sum_{r = l}^{N(l)} \beta_r(l) \varepsilon = \varepsilon \quad \forall l \ge l_0
\]
(here we used the fact that $\sum_{r = l}^{N(l)} \beta_r(l) = 1$). Thus, the sequence $\{ x'_l \}$ also
converges to $x_*$ uniformly on $[0, T]$. Arguing in the same way one can verify that the sequence $\{ u'_l \}$
converges to $u_*(t)$ almost everywhere on $[0, T]$.

By definition $V_{k_l 0}(t) \in \partial_{x, u} H_0(x_{k_l}(t), u_{k_l}(t), t)$ for a.e. $t \in [0, T]$, which by 
the definition of subdifferential means that
\[
  H_0(x, u, t) - H_0(x_{k_l}(t), u_{k_l}(t), t) \ge \langle V_{k_l 0}(t), (x - x_{k_l}(t), u - u_{k_l}(t)) \rangle
  \quad \forall (x, u) \in \mathbb{R}^{n + m}, \quad \text{for a.e. } t \in [0, T].
\]
Due to the convexity of $H_0(x, u, t)$ in $(x, u)$ one has 
$H_0(x'_l(t), u'_l(t), t)) \le \sum_{r = l}^{N(l)} \beta_r(l) H_0(x_{k_l}(t), u_{k_l}(t), t)$. Hence
with the use of the inequality above one obtains that
\begin{multline} \label{eq:ConvexityMazurSeq}
  H_0(x'_l(t), u'_l(t), t)) \le \sum_{r = l}^{N(l)} \beta_r(l) H_0(x_{k_l}(t), u_{k_l}(t), t)
  \le \sum_{r = l}^{N(l)} \beta_r(l) 
  \Big( H_0(x, u, t) - \langle V_{k_l 0}(t), (x - x_{k_l}(t), u - u_{k_l}(t)) \rangle \Big)
  \\ 
  = H_0(x, u, t) - \langle V'_l(t), (x - x_*(t), u - u_*(t)) \rangle
  - \sum_{r = l}^{N(l)} \beta_r(l) \langle V_{k_l 0}(t), (x_*(t) - x_{k_l}(t), u_*(t) - u_{k_l}(t)) \rangle
\end{multline}
for any $(x, u) \in \mathbb{R}^{n + m}$ and a.e. $t \in [0, T]$. Observe that
\begin{align*}
  \Big| \sum_{r = l}^{N(l)} \beta_r(l) \langle V_{k_l 0}(t), (x_*(t) - x_{k_l}(t), u_*(t) - u_{k_l}(t)) \rangle \Big|
  &\le \sum_{r = l}^{N(l)} \beta_r(l) \gamma_R \big| (x_*(t) - x_{k_l}(t), u_*(t) - u_{k_l}(t))) \big|
  \\
  &= \gamma_R \big| (x_*(t) - x'_l(t), u_*(t) - u'_l(t)) \big|.
\end{align*}
Recall that $x'_l$ converges to $x_*$ uniformly on $[0, T]$, $u'_l$ converges to $u_*$ a.e. on $[0, T]$, while
$V'_l$ converges to $V_0^*$ in $L^1([0, T], \mathbb{R}^{n + m})$. Therefore, as is well known (see, e.g.
Bogachev\cite{Bogachev}, Thms.~2.2.5 and 4.5.4), there exists a subsequence of the sequence $\{ V'_l \}$ that
converges to $V_0^*$ almost everywhere on $[0, T]$. Hence passing to the limit in inequality
\eqref{eq:ConvexityMazurSeq} along the corresponding subsequence one finally gets that
\[
  H_0(x, u, t) - H_0(x_*(t), u_*(t), t)) \ge \langle V_0^*(t), (x - x_*(t), u - u_*(t)) \rangle
  \quad \forall (x, u) \in \mathbb{R}^{n + m}, \quad \text{for a.e. } t \in [0, T]
\]
or, equivalently, $V_0^*(t) \in \partial_{x, u} H_0(x_*(t), u_*(t), t)$, which completes the third step of the proof.

\textbf{4. Convergence of the penalty function values.} Let us show that for any $c > 0$ one has
\begin{align} \label{eq:PenFuncValueConvergence1}
  \lim_{l \to \infty} Q_c(x_{k_l}, u_{k_l}, x_{k_l}, u_{k_l}, V_{k_l}) &= Q_c(x_*, u_*; x_*, u_*; V^*),
  \\
  \lim_{l \to \infty} Q_c(x, u, x_{k_l}, u_{k_l}, V_{k_l}) &= Q_c(x, u; x_*, u_*; V^*) \quad \forall (x, u) \in X_0.
  \label{eq:PenFuncValueConvergence2}
\end{align}
Indeed, fix any $c > 0$ and $(x, u) \in X_0$. For the sake of convenience, recall that by definition
\begin{align} \notag
  Q_c&(x, u, x_{k_l}, u_{k_l}, V_{k_l})  
  = \int_0^T \Big( G_0(x(t), u(t), t) - \langle V_{k_l 0}(t), (x(t), u(t)) \rangle \Big) \, dt 
  + g_0(x(0), x(T)) - \langle v_{k_l 0}, (x(0), x(T)) \rangle
  \\ \notag
  &+ c \sum_{i = 1}^n \int_0^T 
  \begin{aligned}[t]
    \max\Big\{ &\dot{x}^{(i)}(t) + H_i(x(t), u(t), t) 
    - G_i(x_{k_l}(t), u_{k_l}(t), t) - \langle W_{k_l i}(t), (x(t) - x_{k_l}(t), u(t) - u_{k_l}(t)) \rangle,
    \\
    &- \dot{x}^{(i)}(t) + G_i(x(t), u(t), t) - H_i(x_{k_l}(t), u_{k_l}(t), t) 
    - \langle V_{k_l i}(t), (x(t) - x_{k_l}(t), u(t) - u_{k_l}(t)) \rangle \Big\} \, dt
  \end{aligned}
  \\ \notag
  &+ c \sum_{i = 1}^{\ell_I}
  \max\big\{ 0, g_i(x(0), x(T)) - h_i(x_{k_l}(0), x_{k_l}(T)) 
  - \langle v_{k_l i}, (x(0) - x_{k_l}(0), x(T) - x_{k_l}(T)) \rangle \big\}
  \\ \notag
  &+ c \sum_{j = \ell_I + 1}^{\ell_E}
  \begin{aligned}[t]
    \max\big\{ &g_i(x(0), x(T)) - h_i(x_{k_l}(0), x_{k_l}(T)) 
    - \langle v_{k_l i}, (x(0) - x_{k_l}(0), x(T) - x_{k_l}(T)) \rangle,
    \\
    &h_i(x(0), x(T)) - g_i(x_{k_l}(0), x_{k_l}(T)) 
    - \langle w_{k_l i}, (x(0) - x_{k_l}(0), x(T) - x_{k_l}(T)) \rangle \big\}
  \end{aligned}
  \\ \label{eq:PenFuncAtIterateDef}
  &+ c \sum_{s = 1}^{\ell_M} \int_0^T \max\Big\{ 0, 
  p_s(x(t), u(t), t) - q_s(x_{k_l}(t), u_{k_l}(t), t) 
  - \langle z_{k_l s}(t), (x(t) - x_{k_l}(t), u(t) - u_{k_l}(t)) \rangle \Big\} \, dt.
\end{align}
Let us prove equality \eqref{eq:PenFuncValueConvergence1} first. Putting $(x, u) = (x_{k_l}, u_{k_l})$ one gets
\begin{align*}
  Q_c(x_{k_l}, u_{k_l}, x_{k_l}, u_{k_l}, V_{k_l}) 
  &= \int_0^T \Big( G_0(x_{k_l}(t), u_{k_l}(t), t) - \langle V_{k_l 0}(t), (x_{k_l}(t), u_{k_l}(t)) \Big) \, dt 
  \\
  &+ g_0(x_{k_l}(0), x_{k_l}(T)) - \langle v_{k_l 0}, (x_{k_l}(0), x_{k_l}(T)) \rangle 
  + c \varphi(x_{k_l}, u_{k_l}).
\end{align*}
The convergence of $\varphi(x_{k_l}, u_{k_l})$ to $\varphi(x_*, u_*)$ and the equality
\[
  \lim_{l \to \infty} \int_0^T G_0(x_{k_l}(t), u_{k_l}(t), t) \, dt = \int_0^T \Big( G_0(x_*(t), u_*(t), t) \, dt
\]
can be proved with the use of Lebesgue's dominated convergence theorem and Assumption~\ref{assumpt:ContDiff} in
precisely the same way as in the proof of Proposition~\ref{prp:PenFuncContinuity} 
(see also Remark~\ref{rmrk:PenFuncContinuity}. The equality
\[
  \lim_{l \to \infty} 
  \big( g_0(x_{k_l}(0), x_{k_l}(T)) - \langle v_{k_l 0}, (x_{k_l}(0), x_{k_l}(T)) \rangle \big)
  = g_0(x_*(0), x_*(T)) - \langle v^*_0, (x_*(0), x_*(T)) \rangle
\]
holds true due to the uniform convergence of $x_{k_l}$ to $x_*$ and the convergence of the corresponding subgradients.
Finally, by inequalities \eqref{eq:FuncSubgrad_UniformlyBounded} there exists $\gamma_R > 0$ such that
\begin{align*}
  \Big| \int_0^T \Big( \langle V_{k_l 0}(t), (x_{k_l}(t), u_{k_l}(t)) 
  - \langle V^*_0(t), (x_*(t), u_*(t)) \rangle \Big) \, dt \Big|
  &\le \Big| \int_0^T \langle V_{k_l 0}(t) - V^*_0(t), (x_*(t), u_*(t)) \rangle \, dt \Big| 
  \\
  + \Big| \int_0^T \langle V_{k_l 0}(t), (x_{k_l}(t) - x_*(t), u_{k_l}(t) - u_*(t)) \rangle dt \Big|
  &\le \Big| \int_0^T \langle V_{k_l 0}(t) - V^*_0(t), (x_*(t), u_*(t)) \rangle \, dt \Big| 
  \\
  &+ \gamma_R T \| x_{k_l} - x_* \|_{\infty} + \gamma_R \| u_{k_l} - u_* \|_1.
\end{align*}
The right-hand size of this inequality converges to zero as $l \to \infty$, since $x_{k_l}$ uniformly converges to
$x_*$, $u_{k_l}$ converges to $u_*$ in $L^1([0, T], \mathbb{R}^m)$, and $V_{k_l 0}$ weakly converges to $V^*_0$.
Therefore, equality \eqref{eq:PenFuncValueConvergence1} holds true.

Let us now prove equality \eqref{eq:PenFuncValueConvergence2}. To this end, we shall prove that each term in the
expression \eqref{eq:PenFuncAtIterateDef} converges to the corresponding term in the expression for 
$Q_c(x, u; x_*, u_*; V^*)$ as $l \to \infty$. 

The convergence of the boundary terms follows directly from the convergence of $(x_{k_l}(0), x_{k_l}(T))$ to 
$(x_*(0), x_*(T))$ and the convergence of the corresponding subgradients (see Steps 1 and 2 of the proof). Let us now
consider the integral terms. The fact that
\[
  \int_0^T \langle V_{k_l 0}(t), (x(t), u(t)) \rangle \, dt \to 
  \int_0^T \langle V^*_0(t), (x(t), u(t)) \rangle \, dt
\]
follows directly from the weak convergence of $V_{k_l}$ to $V_*$. Let us now prove the convergence of the terms
\begin{equation} \label{eq:SystDynPenTerm}
\begin{split}
  \int_0^T \max\Big\{ &\dot{x}^{(i)}(t) + H_i(x(t), u(t), t) 
  - G_i(x_{k_l}(t), u_{k_l}(t), t) - \langle W_{k_l i}(t), (x(t) - x_{k_l}(t), u(t) - u_{k_l}(t) \rangle,
  \\
  &- \dot{x}^{(i)}(t) + G_i(x(t), u(t), t) - H_i(x_{k_l}(t), u_{k_l}(t), t) 
  - \langle V_{k_l i}(t), (x(t) - x_{k_l}(t), u(t) - u_{k_l}(t) \rangle \Big\} \, dt
\end{split}
\end{equation}
The convergence of the terms corresponding to the mixed state-control constraints can be proved in exactly the same way.

Note that each term under the max operator in expression \eqref{eq:SystDynPenTerm} converges only weakly in the general
case due to the weak convergence of the subgradients $\{ W_{k_l i} \}$ and $\{ V_{k_l i} \}$, but the max operator is
not weakly continuous in $L^p$ spaces (see Chen and Wickstead\cite{ChenWickstead}, Crlr.~2.3). To overcome this
difficulty, we will use the assumption on differentiability of the function $G_i$ and $H_i$. Indeed, under this
assumption one has
\[
  V_{k_l i}(t) = \nabla G_i(x_{k_l}(t), u_{k_l}(t), t), \quad
  W_{k_l i}(t) = \nabla H_i(x_{k_l}(t), u_{k_l}(t), t) \quad \forall t \in [0, T],
\]
where $\nabla G(x, u, t)$ is the gradient of the function $(x, u) \mapsto G(x, u, t)$. Since by our assumption the
gradients of the functions $G_i$ and $H_i$ are Carath\'{e}odory mappings, one has
\[
  \lim_{l \to \infty} V_{k_l i}(t) = \nabla G_i(x_*(t), u_*(t), t) = V_i^*(t), \quad
  \lim_{l \to \infty} W_{k_l i}(t) = \nabla H_i(x_*(t), u_*(t), t) = W_i^*(t)
\]
for a.e. $t \in [0, T]$. Hence by applying the growth conditions from Assumption~\ref{assumpt:ContDiff} and Lebesgue's
dominated convergence theorem one can readily verify that $V_{k_l i}$ strongly converges to $V_i^*$ in 
$L^1([0, T]; \mathbb{R}^{m + n})$, while $W_{k_l i}$ strongly converges to $W_i^*$ in $L^1([0, T]; \mathbb{R}^{m + n})$.
With the use of the strong convergence of subgradients, the growth conditions from Assumption~\ref{assumpt:ContDiff},
and Lebesgue's dominated convergence theorem one can easily check the convergence of the integral terms
\eqref{eq:SystDynPenTerm}.

\textbf{5. Proof of criticality.} Denote
\[
  V^* = (V_0^*(\cdot), V_1^*(\cdot), \ldots, V_n^*(\cdot), W_1^*(\cdot), \ldots, W_n^*(\cdot), 
  v_0^*, v_1^*, \ldots, v_{\ell_E}^*, w_{\ell_I + 1}^*, \ldots, w_{\ell_E}^*, 
  z_1^*(\cdot), \ldots z_{\ell_m}^*(\cdot)).
\]
As was noted in the proof of Theorem~\ref{thrm:StoppingCriteria1}, from the assumption on the boundedness of the penalty
parameter and Assumption~\ref{assumpt:PenaltyIncrease} it follows that there exist $\widehat{k} \in \mathbb{N}$ and 
$c_* > 0$ such that $c_k = c_*$ for all $k \ge \widehat{k}$. Therefore, we can suppose that 
$c_{k_l} = c_{k_l + 1} = c_*$ for all $l \in \mathbb{N}$. 

Arguing by reductio ad absurdum, suppose that $(x_*, u_*)$ is not a generalised $\varepsilon_*$-critical point of 
the problem $(\mathcal{P})$. According to Def.~\ref{def:EpsilonCriticality} it means that $(x_*, u_*)$ is not a
$\varepsilon_*$-optimal solution of the problem
\[
  \minimise_{(x, u) \in X_0} \enspace Q_{c_*}(x, u; x_*, u_*; V^*).
\]
Denote the optimal value of this problem by $Q^*$. Then by definition 
$Q^* + \varepsilon_* < Q_{c_*}(x_*, u_*, x_*, u_*; V^*)$, which implies that there exists $\theta > 0$ and
$(\widehat{x}, \widehat{u}) \in X_0$ such that
\[
  Q_{c_*}(\widehat{x}, \widehat{u}; x_* u_*; V^*)  + \varepsilon_* + \theta \le Q_{c_*}(x_*, u_*; x_*, u_*; V^*).
\]
With the use of equalities \eqref{eq:PenFuncValueConvergence1} and \eqref{eq:PenFuncValueConvergence2}, and the fact
that $\limsup_{k \to \infty} \varepsilon_k = \varepsilon_*$ one obtains that there exists $l_0 \in \mathbb{N}$ such
that $k_{l_0} \ge \widehat{k}$ and
\[
  Q_{c_*}(\widehat{x}, \widehat{u}; x_{k_l}, u_{k_l}; V_{k_l}) + \varepsilon_k + \frac{\theta}{2}
  \le Q_{c_*}(x_{k_l}, u_{k_l}; x_{k_l}, u_{k_l}; V_{k_l})
\]
for any $l \ge l_0$. Recall that $(x_k[c], u_k[c])$ is an $\varepsilon_k$-optimal solution of problem
\eqref{subprob:MainStep} (see Algorithmic Pattern~\ref{alg:Boosted_SEP_DCA}). Therefore
\[
  Q_{c_*}(x_{k_l}[c_{k_l + 1}], u_{k_l}[c_{k_l + 1}]; x_{k_l}, u_{k_l}; V_{k_l}) + \frac{\theta}{2}
  \le Q_{c_*}(x_{k_l}, u_{k_l}; x_{k_l}, u_{k_l}; V_{k_l})
\]
for any $l \ge l_0$. Now, arguing in the same way as in the proof of Lemma~\ref{lem:PenFunctionDecay} and applying 
the inequality above one can readily verify that
\[
  \Phi_{c_*}(x_{k_l + 1}, u_{k_l + 1}) 
  \le \Phi_{c_*}(x_{k_l}, u_{k_l}) - \sigma \alpha_{k_l}^2 \rho_{k_l}^2 + \nu_{k_l} - \frac{\theta}{2}
\]
for any $l \ge l_0$. Hence with the use of Lemma~\ref{lem:PenFunctionDecay} one gets that
\[
  \Phi_{c_*}(x_{k_{l + 1}} u_{k_{l + 1}}) - \Phi_{c_*}(x_{k_l}, u_{k_l}) 
  = \sum_{i = k_l}^{k_{l + 1} - 1} \Big( \Phi_{c_*}(x_{i + 1}, u_{i + 1}) - \Phi_{c_*}(x_i, u_i) \Big)  
  \le - \sigma \sum_{i = k_l}^{k_{l + 1} - 1} \alpha_i^2 \rho_i^2 
  + \sum_{i = k_l}^{k_{l + 1} - 1} \nu_i - \frac{\theta}{2}
\]
for any $l \ge l_0$. By our assumption the sequence $\{ \nu_k \}$ is summable, which by Cauchy's convergence test
implies that there exists $l_1 \ge l_0$ such that $\sum_{i = k_l}^{k_{l + 1} - 1} \nu_i < \theta/4$ for any $l \ge l_1$.
Consequently, one has
\[
  \Phi_{c_*}(x_{k_{l + 1}}, u_{k_{l + 1}}) - \Phi_{c_*}(x_{k_l}, u_{k_l}) \le - \frac{\theta}{4}
  \quad \forall l \ge l_1.
\]
Therefore $\Phi_{c_*}(x_{k_l}, u_{k_l}) \to - \infty$ as $l \to \infty$, which contradicts 
Assumption~\ref{assumpt:PenaltyFuncBoundedBelow}.
\end{proof}

\begin{remark}
From the proof of the previous theorem it follows that the conclusion of the theorem remains hold true in the general
nonsmooth case (that is, without the additional assumption on differentiability of $G_i(x, u, t)$, $H_i(x, u, t)$, and
$q_s(x, u, t)$), if the corresponding subsequences of subgradients $\{ V_{k_l i} \}$, $\{ W_{k_l i} \}$, and 
$\{ z_{k_l s} \}$ converge not only weakly, but also a.e. on $[0, T]$. More generally, for the validity of the theorem
in the fully nonsmooth case it is sufficient to suppose that equality \eqref{eq:PenFuncValueConvergence2} holds true,
that is, all integral terms in expression \eqref{eq:PenFuncAtIterateDef} for $Q_c(x, u, x_{k_l}, u_{k_l}, V_{k_l})$
corresponding to the constraints of the problem $(\mathcal{P})$ converge as $l \to \infty$ for any $(x, u) \in X_0$. 
\end{remark}

\subsection{Convergence of control inputs vs. convergence of trajectories}
\label{subsect:Control_vs_traj}

Let us prove an interesting property of sequences generated by B-STEP-DCA. Namely, we will show that under some natural
assumptions the convergence of a subsequence of controls $\{ u_{k_l} \}$ implies the convergence of the corresponding
subsequence of trajectories. In particular, if the sequence of controls $\{ u_k \}$ converges, then the corresponding
sequence of trajectories $\{ x_k \}$ converges as well. Thus, under some natural assumptions the convergence of the
sequence $\{ (x_k, u_k) \}$ generated by B-STEP-DCA is completely defined by the convergence of the sequence of controls
$\{ u_k \}$, despite the fact that this method treats $x$ and $u$ as independent variables.

\begin{theorem} \label{thrm:TrajectoryConvVsControlConv}
Let the sequence $\{ (x_k, u_k) \}$ be generated by Algorithmic Pattern~\ref{alg:Boosted_SEP_DCA}, the sequence 
$\{ x_k \}$ be bounded in $W^{1, r}([0, T]; \mathbb{R}^n)$ for some $1 < r \le \infty$, and $\varphi(x_k, u_k) \to 0$ as
$k \to \infty$. Suppose also that some subsequence of controls $\{ u_{k_l} \}$ is uniformly bounded and converges in
$L^1([0, T]; \mathbb{R}^m)$ to a function $u_* \in L^{\infty}([0, T]; \mathbb{R}^m)$. Then the corresponding subsequence
of trajectories $\{ x_{k_l} \}$ is relatively compact in $W^{1, 1}([0, T]; \mathbb{R}^n)$, its limit points $x_*$ belong
to $W^{1, \infty}([0, T]; \mathbb{R}^n)$, and all pairs $(x_*, u_*)$ are feasible for the problem $(\mathcal{P})$. If,
in addition, the initial condition $x(0)$ is fixed (e.g. $X_0 = \{ (x, u) \in X \mid x(0) = x^0$ for some fixed 
$x^0 \in \mathbb{R}^n$), then the subsequence $\{ x_{k_l} \}$ converges in $W^{1, 1}([0, T]; \mathbb{R}^n)$.
\end{theorem}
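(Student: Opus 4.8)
The plan is to first establish relative compactness of $\{x_{k_l}\}$ in $W^{1,1}$ by controlling the derivatives, then to identify every limit point as a solution of the state equation driven by $u_*$ that is feasible and belongs to $W^{1,\infty}$, and finally to upgrade relative compactness to genuine convergence by invoking uniqueness of the corresponding initial value problem. First I would obtain precompactness in $C([0,T]; \mathbb{R}^n)$: since $\{x_{k_l}\}$ is bounded in $W^{1,r}$ with $r>1$, the Sobolev imbedding $W^{1,r}\hookrightarrow C$ gives a uniform bound in $C$, while Hölder's inequality yields $|x_{k_l}(t)-x_{k_l}(s)|\le \|\dot{x}_{k_l}\|_r |t-s|^{1-1/r}$, so the family is uniformly equicontinuous. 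By the Arzel\`a--Ascoli theorem $\{x_{k_l}\}$ is relatively compact in $C([0,T];\mathbb{R}^n)$.

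The heart of the argument is relative compactness of the derivatives in $L^1$. I would write $\dot{x}_{k_l} = F(x_{k_l}(\cdot),u_{k_l}(\cdot),\cdot) + r_{k_l}$, where $r_{k_l} := \dot{x}_{k_l} - F(x_{k_l}(\cdot),u_{k_l}(\cdot),\cdot)$. Since all terms in \eqref{eq:PenTerm} are nonnegative and the first sum equals $\sum_{i=1}^n \|r_{k_l,i}\|_1$, the hypothesis $\varphi(x_k,u_k)\to 0$ forces $r_{k_l}\to 0$ in $L^1$. For the first term, take an arbitrary subsequence; by Arzel\`a--Ascoli extract a further subsequence with $x_{k_l}\to x_*$ uniformly, and (as $u_{k_l}\to u_*$ in $L^1$) a sub-subsequence with $u_{k_l}\to u_*$ a.e. Because $F$ is Carath\'eodory, $F(x_{k_l}(t),u_{k_l}(t),t)\to F(x_*(t),u_*(t),t)$ for a.e.\ $t$; the uniform essential boundedness of $\{u_{k_l}\}$ and the growth condition \eqref{eq:GrowthCondition} furnish an $L^1$ dominating function, so Lebesgue's dominated convergence theorem gives $F(x_{k_l},u_{k_l},\cdot)\to F(x_*,u_*,\cdot)$ in $L^1$ (exactly as in the proof of Proposition~\ref{prp:PenFuncContinuity}). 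Hence $\dot{x}_{k_l}\to F(x_*,u_*,\cdot)$ in $L^1$ and, together with uniform convergence of $x_{k_l}$, the sub-subsequence converges in $W^{1,1}$. Since every subsequence admits such a convergent further subsequence, $\{x_{k_l}\}$ is relatively compact in $W^{1,1}$.

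Next I would identify the limit points. Passing to the limit in $x_{k_l}(t)=x_{k_l}(0)+\int_0^t \dot{x}_{k_l}$ gives $x_*(t)=x_*(0)+\int_0^t F(x_*,u_*,s)\,ds$, i.e.\ $\dot{x}_* = F(x_*,u_*,\cdot)$ a.e.; the growth bound \eqref{eq:GrowthCondition} then forces $\dot{x}_*\in L^\infty$, so $x_*\in W^{1,\infty}$. Feasibility of $(x_*,u_*)$ follows along the same lines: the boundary terms of $\varphi$ vanish in the limit by uniform convergence of $x_{k_l}$ and continuity of the $f_i$, while for the state constraints I would apply Fatou's lemma to $\max\{0,\Xi_s(x_{k_l},u_{k_l},\cdot)\}$ (which converges a.e.\ and whose integral tends to $0$) to conclude $\Xi_s(x_*,u_*,t)\le 0$ for a.e.\ $t$.

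It remains to promote relative compactness to convergence when $x(0)=x^0$ is fixed, and here the main obstacle --- uniqueness of the limit --- is resolved by a Lipschitz estimate hidden in Assumption~\ref{assumpt:ContDiff}. Every limit point $x_*$ solves the \emph{same} initial value problem $\dot{x}=F(x,u_*(t),t)$, $x(0)=x^0$, with the same $u_*$, since the whole control subsequence converges to $u_*$. All such trajectories lie in a fixed ball $\{|x|+|u_*|\le R\}$; on this ball the convex components $G_i,H_i$ have subgradients bounded by $\gamma_R$, hence are $\gamma_R$-Lipschitz there, so $F_i=G_i-H_i$ is Lipschitz in $x$ with a constant independent of $t$. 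A standard Gr\"onwall argument then yields uniqueness of the solution, so all limit points coincide; a relatively compact sequence with a single limit point converges in $W^{1,1}$, which proves the final assertion.
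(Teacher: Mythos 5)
Your proof is correct and follows essentially the same route as the paper's: the paper obtains uniform convergence via reflexivity of $W^{1,r}$ and the Rellich--Kondrachov theorem where you use H\"{o}lder equicontinuity and Arzel\`{a}--Ascoli, and it cites Filippov's uniqueness theorem where you invoke Gr\"{o}nwall, but every substantive step (splitting $\dot{x}_{k_l}$ via the penalty term, dominated convergence for $F$, Fatou for the state constraints, Lipschitz continuity of $F$ from the subgradient bounds) coincides. The only omission is that feasibility of $(x_*, u_*)$ also requires $(x_*, u_*) \in X_0$, which follows immediately from the standing assumption that $X_0$ is closed in the topology of $W^{1,1}([0,T];\mathbb{R}^n) \times L^1([0,T];\mathbb{R}^m)$.
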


\begin{proof}
Let us prove the relative compactness of the sequence $\{ x_{k_l} \}$. Indeed, choose any subsequence of this sequence.
For the sake of convenience, denote it by $\{ x_{k_l} \}$. By our assumption this sequence is bounded in 
$W^{1, r}([0, T]; \mathbb{R}^n)$ for some $1 < r \le + \infty$. One can obviously suppose that $r < + \infty$. Since the
space $W^{1, r}([0, T]; \mathbb{R}^n)$ is reflexive (see, e.g. Adams\cite{Adams}, Thm.~3.5), one can extract a
subsequence, which we denote again by $\{ x_{k_l} \}$, weakly converging to some 
$x_* \in W^{1, r}([0, T]; \mathbb{R}^n)$. By the Rellich-Kondrachov theorem (Adams\cite{Adams}, Thm.~6.2) the imbedding
of $W^{1, r}([0, T]; \mathbb{R}^n)$ into $C([0, T]; \mathbb{R}^n)$ is compact, which implies that the sequence 
$\{ x_{k_l} \}$ converges to $x_*$ uniformly on $[0, T]$ and, therefore, is uniformly bounded on $[0, T]$.

Replacing, if necessary, the sequence $\{ u_{k_l} \}$ by its subsequence one can suppose that it converges to $u_*$
almost everywhere on $[0, T]$. Therefore, $F(x_{k_l}(\cdot), u_{k_l}(\cdot), \cdot)$ converges to $F(x_*(\cdot),
u_*(\cdot), \cdot)$ almost everywhere on $[0, T]$. Hence by applying Lebesgue's dominated convergence theorem and the
growth conditions on the mapping $F$ from Assumption~\ref{assumpt:ContDiff} one can readily check that
$F(x_{k_l}(\cdot), u_{k_l}(\cdot), \cdot)$ converges to $F(x_*(\cdot), u_*(\cdot), \cdot)$ in $L^1([0, T],
\mathbb{R}^n)$.

In turn, from the assumption that $\varphi(x_k, u_k) \to 0$ as $k \to \infty$ it follows that
\[
  \lim_{l \to \infty} \sum_{i = 1}^n \int_0^T \big| \dot{x}_{k_l i}(t) - F_i(x_k(t), u_k(t), t) \big| = 0,
\]
that is, the function $\dot{x}_{k_l}(\cdot) - F_i(x_{k_l}(\cdot), u_{k_l}(\cdot), \cdot)$ converges to zero in 
$L_1([0, T], \mathbb{R}^n)$. Consequently, by noting that
\[
  \dot{x}_{k_l}(\cdot) = \Big( \dot{x}_{k_l}(\cdot) - F_i(x_{k_l}(\cdot), u_{k_l}(\cdot), \cdot) \Big) 
  + F(x_{k_l}(\cdot), u_{k_l}(\cdot), \cdot)
\]
one can conclude that the sequence $\{ \dot{x}_{k_l} \}$ converges in $L^1([0, T], \mathbb{R}^n)$ to 
$F(x_*(\cdot), u_*(\cdot), \cdot)$. Bearing in mind this fact and the weak convergence of $\{ x_{k_l} \}$ to $x_*$ in
$W^{1, r}([0, T]; \mathbb{R}^n)$ one gets that the sequence $\{ x_{k_l} \}$ converges to $x_*$ in 
$W^{1, 1}([0, T]; \mathbb{R}^n)$ and $\dot{x}_*(t) = F(x_*(t), u_*(t), t)$ for a.e. $t \in [0, T]$. 

By the assumption of the Theorem, $u_* \in L^{\infty}([0, T]; \mathbb{R}^m)$. Consequently, by
Assumption~\ref{assumpt:ContDiff} the function $\dot{x}_*(\cdot) = F(x_*(\cdot), u_*(\cdot), \cdot)$ is essentially
bounded. Thus, the subsequence of trajectories $\{ x_{k_l} \}$ is relatively compact in $W^{1, 1}([0, T]; \mathbb{R}^n)$
and its limit points belong to $W^{1, \infty}([0, T]; \mathbb{R}^n)$.

Let us check the feasibility of $(x_*, u_*)$. Indeed, from the uniform convergence of $\{ x_{k_l} \}$ to $x_*$ and the
fact that $\varphi(x_k, u_k) \to 0$ as $k \to \infty$ it obviously follows that $f_i(x_*(0), x_*(T)) \le 0$ for all 
$i \in \mathcal{I}$, and $f_j(x_*(0), x_*(T)) = 0$ for all $j \in \mathcal{E}$. Moreover, $(x_*, u_*) \in X_0$, since 
the set $X_0$ is closed in the topology of the space 
$W^{1, 1}([0, T]; \mathbb{R}^n) \times L^1([0, T]; \mathbb{R}^m)$ by our assumption.

Clearly, the sequence $\Xi_s(x_{k_l}(\cdot), u_{k_l}(\cdot), \cdot)$ converges to $\Xi_s(x_*(\cdot), u(\cdot), \cdot))$
almost everywhere on $[0, T]$. Hence by Fatou's lemma
\[
  \int_0^T \max\{ 0, \Xi_s(x_*(t), u_*(t), t) \} \, dt 
  \le \liminf_{l \to \infty} \int_0^T \max\{ 0, \Xi_s(x_{k_l}(t), u_{k_l}(t), t) \} \, dt
  \le \liminf_{l \to \infty} \varphi(x_k, u_k) = 0.
\] 
Consequently, $\Xi_s(x_*(t), u_*(t), t) \le 0$ for a.e. $t \in [0, T]$, and the pair $(x_*, u_*)$ is feasible for the
problem $(\mathcal{P})$.

Suppose finally that $x(0)$ is fixed. Let us check that in this case a limit point of the sequence $\{ x_{k_l} \}$ in
$W^{1, 1}([0, T]; \mathbb{R}^n)$ is unique, which due to the relative compactness of this sequence implies that it
converges. To this end, note that, as we have just proved, limit points of this sequence are solutions of the
differential equation 
\begin{equation} \label{eq:DiffEqUniqueness}
  \dot{x}(t) = F(x(t), u_*(t), t), \quad t \in [0, T] 
\end{equation}
The growth condition on subdifferentials of the functions $G_i$ and $H_i$ from Assumption~\ref{assumpt:ContDiff}
along with Thm.~24.7 from Rockafellar\cite{Rockafellar} imply that the map $(x, u) \mapsto F_i(x, u, t)$ is Lipschitz
continuous on the set $\{ (x, u) \in \mathbb{R}^{n + m} \mid |x| + |u| \le R \}$ with Lipschitz constant 
$L = 2 \gamma_R$. Consequently, by the uniqueness theorem (see, e.g. Filippov\cite{Filippov}, Thm.~1.1.2) 
the differential equation \eqref{eq:DiffEqUniqueness} has a unique absolutely continuous solution on $[0, T]$ for any
fixed initial condition. In other words, a limit point of the sequence $\{ x_{k_l} \}$ in 
$W^{1, 1}([0, T]; \mathbb{R}^n)$ is unique.
\end{proof}

\subsection{Convergence of the infeasibility measure}
\label{subsect:InfeasMeas}

Let us finally analyse convergence of the infeasibility measure $\varphi(x_k, u_k)$. To this end, we need to introduce
an auxiliary definition of criticality for this penalty term. Recall that $\Gamma(\cdot; x_k, u_k; V_k)$ is a global
convex majorant of $\varphi$ (see inequalities \eqref{eq:PenTermConvexMajorant}).

\begin{definition} \label{def:PenTermCriticality}
Let $\varepsilon \ge 0$ be given. A point $(x_*, u_*) \in X_0$ is called an $\varepsilon$-\textit{critical} point 
of the penalty term $\varphi$, if there exists a collection of subgradients $V$ of the corresponding convex functions at
$(x_*, u_*)$, defined in Eq.~\eqref{eq:SubgradientCollection}, such that $(x_*, u_*)$ is an $\varepsilon$-optimal
solution of the optimal feasibility problem
\[
  \minimise_{(x, u) \in X_0} \enspace \Gamma(x, u; x_*, u_*; V).
\]
If $\varepsilon = 0$, then $(x_*, u_*)$ is simply called \textit{critical} for the penalty term $\varphi$.
\end{definition}

Since any feasible point of the problem $(\mathcal{P})$ is a global minimiser of the penalty term $\varphi$ (recall that
this function is nonnegative and equal to zero if and only if the corresponding point is feasible), any such point is
$\varepsilon$-critical for $\varphi$ for any $\varepsilon \ge 0$. In the case $\varepsilon > 0$, any point 
$(x_*, u_*) \in X_0$ such that $\varphi(x_*, u_*) \le \varepsilon$ is also $\varepsilon$-critical for the penalty term
$\varphi$ by virtue of inequalities \eqref{eq:PenTermConvexMajorant}. However, if a point $(x_*, u_*)$ is infeasible
and $\varphi(x_*, u_*) > \varepsilon$, then its $\varepsilon$-criticality means that the constraints of the problem
$(\mathcal{P})$ are in some sense degenerate at this point.

Our aim is to show that under some additional assumptions all limit points of the sequence $\{ (x_k, u_k) \}$ generated
by B-STEP-DCA are either feasible for the problem $(\mathcal{P})$ or infeasible
$(\varepsilon_{\varphi} + \varepsilon_*)$-critical for the penalty term, where 
$\varepsilon_* = \limsup_{k \to \infty} \varepsilon_k$. In other words, our aim is, roughly speaking, to show that the
sequence generated by B-STEP-DCA either converges to a feasible point of the problem $(\mathcal{P})$ or gets stuck in a
small neighbourhood of an infeasible point at which the constraints of the problem $(\mathcal{P})$ are degenerate. The
proof of this result is in many ways similar to the proof of Theorem~\ref{thrm:ConvergenceToCriticalPoints}. Note,
however, that for the validity of this result the boundedness of the penalty parameter is \textit{not} necessary.

To prove the approximate criticality for the penalty term $\varphi$ of limit points of the sequence generated by
B-STEP-DCA, we impose the following additional assumption ensuring that the penalty parameter updates on Step~4 are
consistent with Step 3 of this method.

\begin{assumption} \label{assumpt:Step3and4Consistency}
For any $k \in \mathbb{N}$, if Step~3 is executed on iteration $k$ of Algorithmic Pattern~\ref{alg:Boosted_SEP_DCA},
then the point $(x_k[c_{k + 1}], u_k[c_{k + 1}])$ computed on Step~4 of Algorithmic Pattern~\ref{alg:Boosted_SEP_DCA}
satisfies the inequality
\begin{equation} \label{eq:Step3and4Consistency}
  \Gamma(x_k[c_{k + 1}], u_k[c_{k + 1}]; x_k, u_k; V_k) - \Gamma(x_k, u_k; x_k, u_k; V_k)
  \le \eta_1 \Big( \Gamma(\widehat{x}_k, \widehat{u}_k; x_k, u_k; V_k) - \Gamma(x_k, u_k; x_k, u_k; V_k) \Big)
\end{equation}
from Step~3.
\end{assumption}

It should be noted that this assumption is not restrictive. Indeed, if $\varepsilon_k = 0$ (that is, optimal solutions
of corresponding convex subproblems are computed exactly), then Assumption~\ref{assumpt:Step3and4Consistency} is
satisfied by virtue of Lemma~\ref{lem:InfeasMeasBehaviour}. In turn, if $\varepsilon_k > 0$, then from
Theorem~\ref{thrm:MethodCorrectness} it follows that there exists $c_* \ge c_k$ such that inequality
\eqref{eq:InfeasMeasDecay} on Step~3 and inequality \eqref{eq:PenFuncDecay} on Step~4 are satisfied for any 
$c_{k + 1} = c_+ \ge c_*$. In other words, increasing $c_{k + 1}$, if necessary, one can guarantee that
Assumption~\ref{assumpt:Step3and4Consistency} holds true.

Strictly speaking, one can ensure the validity of Assumption~\ref{assumpt:Step3and4Consistency} by checking 
the validity of inequality \eqref{eq:Step3and4Consistency} whenever the penalty parameter $c_{k + 1}$ is updated on
Step~4 and increasing it further, if inequality \eqref{eq:Step3and4Consistency} is violated. However, according to our
numerical experiments, such additional safeguard is completely redundant.

\begin{theorem} \label{thrm:InfeasMeasConvergence}
Let the sequence $\{ (x_k, u_k) \}$ be generated by Algorithmic Pattern~\ref{alg:Boosted_SEP_DCA}, the sequence of
optimality tolerances $\{ \varepsilon_k \}$ be bounded, $\sum_{k = 0}^{\infty} \nu_k < + \infty$, 
$\limsup_{k \to \infty} \varepsilon_k = \varepsilon^*$, and the parameter $\varkappa$ from
Assumption~\ref{assumpt:PenaltyIncrease} satisfies the inequality $\varkappa \ge 2$. Suppose also that for any 
$i \in \{ 1, \ldots, n \}$, and $s \in \mathcal{M}$ the functions $G_i(x, u, t)$, $H_i(x, u, t)$, and $q_s(x, u, t)$ are
differentiable in $(x, u)$ for a.e. $t \in [0, T]$ and their partial derivatives in $x$ and $u$ are Carath\'{e}odory
functions. Let finally one of the following assumption be valid:
\begin{enumerate}
\item{the sequence of penalty parameters $\{ c_k \}$ is bounded;}

\item{the sequence $\{ (x_k, u_k) \}$ converges in $W^{1, 1}([0, T]; \mathbb{R}^n) \times L^1([0, T]; \mathbb{R}^m)$
and the sequence $\{ u_k \}$ is bounded in $L^{\infty}([0, T]; \mathbb{R}^m)$;}

\item{$\sum_{k = 0}^{\infty} \max\{ 0, \varphi(x_k[c_{k + 1}], u_k[c_{k + 1}]) - \varphi(x_k, u_k) \} < + \infty$ and
$\overline{\alpha}_k = 0$ for any $k \in \mathbb{N}$ (that is, line search is not employed).}
\end{enumerate} 
Then limit points $(x_*, u_*) \in X$ of the sequence $\{ (x_k, u_k) \}$ in the topology of the space 
$W^{1, 1}([0, T]; \mathbb{R}^n) \times L^1([0, T]; \mathbb{R}^m)$ (if exist) are either feasible points of the problem
$(\mathcal{P})$ or infeasible $(\varepsilon_{\varphi} + \varepsilon^*)$-critical points for the penalty term
$\varphi$.
\end{theorem}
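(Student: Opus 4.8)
The plan is to follow the architecture of the proof of Theorem~\ref{thrm:ConvergenceToCriticalPoints}, but with the infeasibility majorant $\Gamma$ playing the role of the penalty majorant $Q_c$, and to argue by contradiction: an infeasible limit point that fails to be $(\varepsilon_\varphi+\varepsilon^*)$-critical for $\varphi$ would force $\Phi$ (or $\varphi$ itself) to decrease without bound. First I would fix a limit point $(x_*,u_*)$ and a subsequence $(x_{k_l},u_{k_l})\to(x_*,u_*)$ in $Y=W^{1,1}([0,T];\mathbb{R}^n)\times L^1([0,T];\mathbb{R}^m)$, so that by the Sobolev imbedding $x_{k_l}\to x_*$ uniformly and, after thinning, $u_{k_l}\to u_*$ a.e. The key structural observation is that $\Gamma$ involves \emph{only} the subgradients of $G_i,H_i$ ($i\in\{1,\dots,n\}$), of $q_s$, and the boundary subgradients; these are precisely the functions assumed differentiable, so the relevant $V_{k_l i},W_{k_l i},z_{k_l s}$ reduce to the gradients $\nabla G_i,\nabla H_i,\nabla q_s$ evaluated along the iterates, which converge a.e.\ (Carath\'eodory property) and, by the growth conditions of Assumption~\ref{assumpt:ContDiff} and dominated convergence, strongly in $L^1$; the boundary subgradients converge by local boundedness and the closed graph of the subdifferential, exactly as in the first steps of Theorem~\ref{thrm:ConvergenceToCriticalPoints} (using, in each of the three cases, the uniform boundedness of the controls near $(x_*,u_*)$ supplied by that case's hypothesis).

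With strong/a.e.\ convergence of these subgradients in hand, the second step is the $\Gamma$-analogue of Step~4 of Theorem~\ref{thrm:ConvergenceToCriticalPoints}: for every $(x,u)\in X_0$ one shows $\Gamma(x,u;x_{k_l},u_{k_l};V_{k_l})\to\Gamma(x,u;x_*,u_*;V^*)$, and in particular $\Gamma(x_{k_l},u_{k_l};x_{k_l},u_{k_l};V_{k_l})=\varphi(x_{k_l},u_{k_l})\to\varphi(x_*,u_*)=:\varphi_*$ by \eqref{eq:PenTermConvexMajorant}. This is where the differentiability assumption earns its keep and where I expect the main technical obstacle to lie: each term under a $\max$ in $\Gamma$ converges only weakly for general nonsmooth data, and $\max$ is not weakly continuous on $L^1$, so the \emph{strong} convergence of $\nabla G_i,\nabla H_i,\nabla q_s$ is exactly what is needed to pass to the limit inside the integrals, just as in equality~\eqref{eq:PenFuncValueConvergence2}.

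Next I would set up the contradiction. Assume $(x_*,u_*)$ is infeasible ($\varphi_*>0$) and, taking $V^*$ as witness in Definition~\ref{def:PenTermCriticality}, \emph{not} $(\varepsilon_\varphi+\varepsilon^*)$-critical; this yields $\theta>0$ and $(\widehat x,\widehat u)\in X_0$ with $\Gamma(\widehat x,\widehat u;x_*,u_*;V^*)+\varepsilon_\varphi+\varepsilon^*+\theta\le\varphi_*$, and in particular $\varphi_*>\varepsilon_\varphi+\varepsilon^*$. Using the convergence of $\Gamma$-values, the $\varepsilon_{k_l}$-optimality of the Step~2 solution $(\widehat x_{k_l},\widehat u_{k_l})$, and $\limsup_k\varepsilon_k=\varepsilon^*$, I obtain for all large $l$ that $\Gamma(\widehat x_{k_l},\widehat u_{k_l};x_{k_l},u_{k_l};V_{k_l})+\varepsilon_\varphi+\tfrac{\theta}{4}\le\varphi(x_{k_l},u_{k_l})$, which strictly excludes the degenerate else-branch of Step~2 (where $(x_{k_l},u_{k_l})$ would already be $\varepsilon_{k_l}$-critical for $\varphi$). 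Two branches then remain, and both yield a uniform decrease of the infeasibility majorant at the trial point: if Step~3 fires, Assumption~\ref{assumpt:Step3and4Consistency} and \eqref{eq:Step3and4Consistency} give the proportional drop $\Gamma(x_{k_l}[c_{k_l+1}],u_{k_l}[c_{k_l+1}];\cdot)-\varphi(x_{k_l})\le\eta_1(\varepsilon_\varphi+\theta/4-\varphi(x_{k_l}))$; if instead Step~1 exits early, then here is where $\varkappa\ge2$ is used, since Lemma~\ref{lem:InfeasMeasBehaviour}(1) bounds the effect of the subsequent Step~4 penalty increases (each of size at least $\varkappa\ge2$) by $2\varepsilon_{k_l}/(c_{k_l+1}-c)\le\varepsilon_{k_l}$, giving $\Gamma(x_{k_l}[c_{k_l+1}],u_{k_l}[c_{k_l+1}];\cdot)\le\varepsilon_\varphi+\varepsilon_{k_l}$. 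Since $\varphi_*>\varepsilon_\varphi+\varepsilon^*$ and $\varphi(x_{k_l})\to\varphi_*$, both branches produce a fixed $\bar\beta>0$ with $\Gamma(x_{k_l}[c_{k_l+1}],u_{k_l}[c_{k_l+1}];x_{k_l},u_{k_l};V_{k_l})\le\varphi(x_{k_l},u_{k_l})-\bar\beta$ for all large $l$, whence, by the majorant property \eqref{eq:PenTermConvexMajorant}, also $\varphi(x_{k_l}[c_{k_l+1}],u_{k_l}[c_{k_l+1}])\le\varphi(x_{k_l},u_{k_l})-\bar\beta$.

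Finally I would close the contradiction in each regime from this uniform decrease. In Case~1 ($\{c_k\}$ bounded, hence $c_k\equiv c_*$ for large $k$), inequality \eqref{eq:PenFuncDecay} converts the $\Gamma$-drop into an extra $Q_{c_*}$-decrease of at least $c_*\eta_2\bar\beta$, hence, arguing as in Lemma~\ref{lem:PenFunctionDecay}, into $\Phi_{c_*}(x_{k_l}[c_{k_l+1}],u_{k_l}[c_{k_l+1}])\le\Phi_{c_*}(x_{k_l},u_{k_l})-c_*\eta_2\bar\beta$, which survives the line search at cost $+\nu_{k_l}$; telescoping over all iterations (Lemma~\ref{lem:PenFunctionDecay} on the rest) with $\sum_k\nu_k<\infty$ forces $\Phi_{c_*}(x_k,u_k)\to-\infty$, contradicting Assumption~\ref{assumpt:PenaltyFuncBoundedBelow}. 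In Case~3 ($\overline\alpha_k\equiv0$, so $(x_{k+1},u_{k+1})=(x_k[c_{k+1}],u_k[c_{k+1}])$, with summable positive increments of $\varphi$), the decrease reads $\varphi(x_{k_l+1},u_{k_l+1})\le\varphi(x_{k_l},u_{k_l})-\bar\beta$, and telescoping it against $\sum_k\max\{0,\varphi(x_k[c_{k+1}])-\varphi(x_k)\}<\infty$ drives $\varphi(x_k,u_k)\to-\infty$, impossible since $\varphi\ge0$. In Case~2 (the whole sequence converges in $Y$ and $\{u_k\}$ is $L^\infty$-bounded), convergence of the iterates forces the search direction $(x_k[c_{k+1}]-x_k,u_k[c_{k+1}]-u_k)\to0$ in $Y$, so $(x_{k_l}[c_{k_l+1}],u_{k_l}[c_{k_l+1}])\to(x_*,u_*)$, and the lower semicontinuity of $\varphi$ on $Y$ (Remark~\ref{rmrk:PenFuncContinuity}, via Fatou's lemma) gives $\varphi_*\le\liminf_l\varphi(x_{k_l}[c_{k_l+1}],u_{k_l}[c_{k_l+1}])\le\varphi_*-\bar\beta$, a contradiction. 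In every case the assumption that $(x_*,u_*)$ is infeasible yet not $(\varepsilon_\varphi+\varepsilon^*)$-critical for $\varphi$ is untenable, which establishes the theorem.
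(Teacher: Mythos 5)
Your proposal is correct and, in its overall architecture and in Cases 1 and 3, follows essentially the same route as the paper: the same subsequence and subgradient-convergence machinery, the same use of the differentiability hypothesis to upgrade weak convergence of $V_{k_l i}, W_{k_l i}, z_{k_l s}$ to strong $L^1$-convergence and hence convergence of the $\Gamma$-values, the same contradiction setup via $\theta$ and $(\widehat{x}, \widehat{u})$, the same dichotomy between the early exit from Step~1 (where $\varkappa \ge 2$ together with statement~1 of Lemma~\ref{lem:InfeasMeasBehaviour} bounds the effect of Step-4 penalty increases by $\varepsilon_{k_l}$) and the execution of Step~3 (where Assumption~\ref{assumpt:Step3and4Consistency} delivers the proportional drop), and the same closures: $\Phi$-telescoping against $\sum_k \nu_k < +\infty$ in Case~1, and $\varphi$-telescoping against the summable positive increments in Case~3. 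Where you genuinely diverge is Case~2. The paper handles it with a two-part argument resting on an auxiliary result (Lemma~\ref{lem:PenTermLipschitz}) that $\varphi$ is Lipschitz with respect to the $Y$-norm on $L^\infty$-bounded sets; that Lipschitz property is used first to rule out infinitely many early exits from Step~1 and then, via a Cauchy-sequence estimate, to drive $\varphi(x_{k_l}, u_{k_l}) \to -\infty$. You instead observe that convergence of the whole sequence in $Y$ forces the trial points $(x_k[c_{k+1}], u_k[c_{k+1}])$ to converge to $(x_*, u_*)$ as well (the search direction equals $(x_{k+1} - x_k, u_{k+1} - u_k)/(1 + \alpha_k)$), so the lower semicontinuity of $\varphi$ from Remark~\ref{rmrk:PenFuncContinuity} turns the uniform drop $\varphi(x_k[c_{k+1}], u_k[c_{k+1}]) \le \varphi(x_k, u_k) - \bar{\beta}$ directly into $\varphi(x_*, u_*) \le \varphi(x_*, u_*) - \bar{\beta}$. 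This is shorter and dispenses with the Lipschitz lemma entirely; the paper's route is the one that would survive if one only controlled differences of consecutive iterates rather than convergence of the full sequence. One shared caveat worth flagging: your step establishing convergence of the $\Gamma$-values needs the subsequence of controls to be uniformly essentially bounded so that the subgradient bounds $\gamma_R$ from Assumption~\ref{assumpt:ContDiff} apply; this is explicit only in Case~2, and both your argument and the paper's Case~1 and Case~3 lemmas use it implicitly.
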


\begin{remark}
Let us comment on the third assumption in the theorem above. In the general case, the sequence $\{ \varphi(x_k, u_k) \}$
might not be monotone. The increase of the infeasibility measure $\varphi(x_k, u_k)$ can be caused by the employment of
the line search, which is nonmonotone by itself and even in the case $\nu_k = 0$ can increase the value of the
infeasibility measure. The increase of $\varphi(x_k, u_k)$ can also be caused by the fact that the current iterate
$(x_k, u_k)$ is an infeasible approximately critical point of the penalty term $\varphi$. In this case 
Algorithmic  Pattern~\ref{alg:Boosted_SEP_DCA} performs Step~2 and only the inequality 
$\varphi(x_k[c_+], u_k[c_+]) \le \varphi(x_k, u_k) + \varepsilon_{feas}$ can be satisfied without additional
assumptions. Thus, the infeasibility measure $\varphi(x_k, u_k)$ can increase between iterations. The third assumption
of Theorem~\ref{thrm:InfeasMeasConvergence}, roughly speaking, imposes a limitation on how much the infeasibility
measure $\varphi(x_k, u_k)$ can increase due to the criticality of $(x_k, u_k)$ for the penalty term by assuming that
the corresponding sum is bounded. Note that this assumption is satisfied, in particular, if only a finite number of
points in the sequence $\{ (x_k, u_k) \}$ are critical for the penalty term. One can also ensure that the validity of
this assumption by replacing fixed $\varepsilon_{feas}$ with a sequence 
$\{ \varepsilon_{feas}^k \} \subset (0, + \infty)$ such that $\sum_{k = 0}^{\infty} \varepsilon_{feas}^k < + \infty$.
\end{remark}

We divide the proof the theorem into four lemmas, three of which correspond to the three assumptions of
Theorem~\ref{thrm:InfeasMeasConvergence} and one of which is an auxiliary technical result on the penalty term
$\varphi$. For the sake of convenience, denote $Y = W^{1, 1}([0, T]; \mathbb{R}^n) \times L^1([0, T]; \mathbb{R}^m)$. We
start with the lemma corresponding to the first assumption of Theorem~\ref{thrm:InfeasMeasConvergence}.

\begin{lemma} \label{lem:InfeasMeasConvergence_BoundedPenParam}
If under the assumptions of Theorem~\ref{thrm:InfeasMeasConvergence} the sequence of penalty parameters $\{ c_k \}$ is
bounded, then limit points $(x_*, u_*) \in X$ of the sequence $\{ (x_k, u_k) \}$ in the topology of the space $Y$ (if
exist) are either feasible for the problem $(\mathcal{P})$ or $\max\{ \varepsilon_{\varphi}, \varepsilon^* \}$-critical
for the penalty term $\varphi$.
\end{lemma}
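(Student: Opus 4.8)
The plan is to mirror the proof of Theorem~\ref{thrm:ConvergenceToCriticalPoints}, with the infeasibility majorant $\Gamma$ playing the role previously played by $Q_c$ and the optimal feasibility subproblem~\eqref{subprob:OptInfeasMeas} playing the role of the main convex subproblem. First I would use the boundedness of $\{c_k\}$ together with Assumption~\ref{assumpt:PenaltyIncrease} to fix $k_0$ and $c_*$ with $c_k = c_*$ for all $k \ge k_0$, and pass to a subsequence $\{(x_{k_l}, u_{k_l})\}$ converging to $(x_*, u_*)$ in $Y$. I would then reproduce Steps~1--4 of the proof of Theorem~\ref{thrm:ConvergenceToCriticalPoints} to extract a limiting collection of subgradients $V^*$ (with $x_{k_l} \to x_*$ uniformly, $u_{k_l} \to u_*$ a.e., the boundary subgradients convergent, and the functional subgradients converging weakly in $L^1$, upgraded to a.e./strong convergence by the differentiability hypothesis on $G_i, H_i, q_s$). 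Since $\Gamma$ shares all its constraint terms with $Q_c$, the same estimates give $\Gamma(x, u; x_{k_l}, u_{k_l}; V_{k_l}) \to \Gamma(x, u; x_*, u_*; V^*)$ for every $(x, u) \in X_0$ and, with $(x,u)=(x_{k_l},u_{k_l})$, $\varphi(x_{k_l}, u_{k_l}) = \Gamma(x_{k_l}, u_{k_l}; x_{k_l}, u_{k_l}; V_{k_l}) \to \varphi(x_*, u_*) = \Gamma(x_*, u_*; x_*, u_*; V^*)$.

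Next I argue by contradiction: suppose $(x_*, u_*)$ is not $\max\{\varepsilon_\varphi, \varepsilon^*\}$-critical for $\varphi$. By Definition~\ref{def:PenTermCriticality} there exist $(\widehat{x}, \widehat{u}) \in X_0$ and $\theta > 0$ with $\Gamma(\widehat{x}, \widehat{u}; x_*, u_*; V^*) + \max\{\varepsilon_\varphi, \varepsilon^*\} + \theta \le \varphi(x_*, u_*)$; in particular $\varphi(x_*, u_*) > \max\{\varepsilon_\varphi, \varepsilon^*\} \ge 0$, so $(x_*, u_*)$ is infeasible (feasible limit points are automatically $0$-critical, so the stated dichotomy reduces to this case). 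Using the two convergences above and $\limsup_k \varepsilon_k = \varepsilon^*$, I would deduce that for all large $l$ one has $\Gamma(\widehat{x}, \widehat{u}; x_{k_l}, u_{k_l}; V_{k_l}) + \varepsilon_{k_l} + \theta/2 \le \varphi(x_{k_l}, u_{k_l}) = \Gamma(x_{k_l}, u_{k_l}; x_{k_l}, u_{k_l}; V_{k_l})$. Since the $\varepsilon_{k_l}$-optimal value of~\eqref{subprob:OptInfeasMeas} satisfies $\Gamma(\widehat{x}_{k_l}, \widehat{u}_{k_l}; \cdot) \le \Gamma(\widehat{x}, \widehat{u}; \cdot) + \varepsilon_{k_l} < \Gamma(x_{k_l}, u_{k_l}; \cdot)$, the criticality test on Step~2 is never triggered for large $l$; hence each such iteration reaches Step~4 either through the Step~1 shortcut (then $\Gamma(x_{k_l}[c_*], u_{k_l}[c_*]; \cdot) \le \varepsilon_\varphi$) or through Step~3 (then inequality~\eqref{eq:InfeasMeasDecay} holds).

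In either route I would extract a uniform positive decrease. In the Step~3 route, \eqref{eq:InfeasMeasDecay} together with $\Gamma(\widehat{x}_{k_l}, \widehat{u}_{k_l}; \cdot) \le \Gamma(\widehat{x}, \widehat{u}; \cdot) + \varepsilon_{k_l}$ and the margin just derived gives $\varphi(x_{k_l}, u_{k_l}) - \Gamma(x_{k_l}[c_*], u_{k_l}[c_*]; \cdot) \ge \eta_1\theta/2$ for large $l$; in the Step~1 route the same difference is at least $\varphi(x_{k_l}, u_{k_l}) - \varepsilon_\varphi \ge \theta/2$. Writing $Q_{c_*} = \omega_{k_l} + c_*\Gamma$ and invoking inequality~\eqref{eq:PenFuncDecay} (valid at $c_*$ because no increase occurred), both routes yield $Q_{c_*}(x_{k_l}, u_{k_l}; \cdot) - Q_{c_*}(x_{k_l}[c_*], u_{k_l}[c_*]; \cdot) \ge \delta$ for a fixed $\delta > 0$. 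Converting this $Q$-gap into a $\Phi$-gap exactly as in the proof of Lemma~\ref{lem:PenFunctionDecay} (subtract the constant~\eqref{eq:CostFunctionCorrection} and apply the subgradient inequality) gives $\Phi_{c_*}(x_{k_l}, u_{k_l}) - \Phi_{c_*}(x_{k_l}[c_*], u_{k_l}[c_*]) \ge \delta$, and the line search on Step~5 then forces $\Phi_{c_*}(x_{k_l+1}, u_{k_l+1}) \le \Phi_{c_*}(x_{k_l}, u_{k_l}) - \delta + \nu_{k_l}$.

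Finally I would telescope: for intermediate indices Lemma~\ref{lem:PenFunctionDecay} gives $\Phi_{c_*}(x_{i+1}, u_{i+1}) \le \Phi_{c_*}(x_i, u_i) + \nu_i$, so summing the per-iteration estimate from $k_{l_1}$ onward yields $\Phi_{c_*}(x_{k_L}, u_{k_L}) \le \Phi_{c_*}(x_{k_{l_1}}, u_{k_{l_1}}) - (L - l_1)\delta + \sum_{i \ge k_{l_1}} \nu_i$, which tends to $-\infty$ because $\delta > 0$ and $\sum_k \nu_k < +\infty$, contradicting Assumption~\ref{assumpt:PenaltyFuncBoundedBelow}. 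Hence $(x_*, u_*)$ must be $\max\{\varepsilon_\varphi, \varepsilon^*\}$-critical for $\varphi$. The main obstacle I anticipate is the uniform treatment of the two distinct ways Step~4 is reached without raising the penalty parameter --- the Step~1 shortcut contributing the $\varepsilon_\varphi$ term and the Step~3 path the $\varepsilon^*$ term, which is exactly why the threshold is their maximum --- together with the faithful conversion of the $Q$-decrease into a $\Phi$-decrease in the presence of the nonmonotone line search; a secondary technical point is that the subgradient- and value-convergence step is inherited from Theorem~\ref{thrm:ConvergenceToCriticalPoints} and relies on the differentiability hypothesis to pass the max-terms of $\Gamma$ to the limit.
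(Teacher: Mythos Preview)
Your proposal is correct and follows essentially the same route as the paper's own proof: the same subgradient-extraction and $\Gamma$-convergence arguments borrowed from Theorem~\ref{thrm:ConvergenceToCriticalPoints}, the same reductio ad absurdum, the same two-case split (Step~1 shortcut versus Step~3 executed, the Step~2 ``otherwise'' branch being ruled out by the $\theta/2$-margin), conversion of the $\Gamma$-decrease to a $Q$-decrease via \eqref{eq:PenFuncDecay}, conversion of the $Q$-gap to a $\Phi$-gap as in Lemma~\ref{lem:PenFunctionDecay}, and telescoping against $\sum_k \nu_k < +\infty$ to contradict Assumption~\ref{assumpt:PenaltyFuncBoundedBelow}. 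The only cosmetic difference is that the paper fixes an auxiliary $\gamma \in (\varepsilon_\varphi, \varphi(x_*,u_*))$ and uses lower semicontinuity of $\varphi$, whereas you use the already-established convergence $\varphi(x_{k_l},u_{k_l}) \to \varphi(x_*,u_*)$ directly; both give the needed lower bound $\varphi(x_{k_l},u_{k_l}) - \varepsilon_\varphi \ge \text{const} > 0$ in the Step~1 route.
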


\begin{proof}
Let $(x_*, u_*) \in X$ be a limit point of the sequence $\{ (x_k, u_k) \}$ in the topology of the space $Y$. Then there
exists a subsequence $\{ (x_{k_l}, u_{k_l}) \}$ that converges to $(x_*, u_*)$ in the corresponding topology. Arguing in
the same way as in the first part of the proof of Theorem~\ref{thrm:ConvergenceToCriticalPoints} and replacing, if
necessary, the sequence $\{ (x_{k_l}, u_{k_l}) \}$ by its subsequence one can suppose that $x_{k_l}$ converges to $x_*$
uniformly on $[0, T]$, while $u_{k_l}$ converges to $u_*$ almost everywhere.

Almost literally repeating the second and third parts of the proof of Theorem~\ref{thrm:ConvergenceToCriticalPoints} and
replacing, if necessary, the sequence $\{ (x_{k_l}, u_{k_l}) \}$ by its subsequence one can suppose that the sequence
of subgradients $V_{k_l}$ weakly converges to some 
\begin{equation} \label{eq:LimitingCollectedSubgradients}
  V^* = (V_0^*(\cdot), V_1^*(\cdot), \ldots, V_n^*(\cdot), W_1^*(\cdot), \ldots, W_n^*(\cdot), 
  v_0^*, v_1^*, \ldots, v_{\ell_E}^*, w_{\ell_I + 1}^*, \ldots, w_{\ell_E}^*, 
  z_1^*(\cdot), \ldots z_{\ell_M}^*(\cdot))
\end{equation}
with
\begin{gather} \notag
  V_l^*(t) \in \partial_{x, u} H_l(x_*(t), u_*(t), t), \quad l \in \{ 0, 1, \ldots, n \}, \quad
  W_l^*(t) \in \partial_{x, u} G_l(x_*(t), u_*(t), t), \quad l \in \{ 1, \ldots, n \}, 
  \\ \label{eq:LimitingSubgradients}
  v_i^* \in \partial h_i(x_*(0), x_*(T)), \quad i \in \{ 0, 1, \ldots, \ell_E \}, \quad
  w_j^* \in \partial g_j(x_*(0), x_*(T)), \quad j \in \{ \ell_I + 1, \ldots, \ell_E \}, \quad
  \\ \notag
  z_s^*(t) \in \partial_{x, u} q_s(x_*(t), u_*(t), t), \quad s \in \{ 1, \ldots, \ell_M \}.
\end{gather}
for a.e. $t \in [0, T]$.

Next, arguing in the same way as in part 4 of the proof of Theorem~\ref{thrm:ConvergenceToCriticalPoints} and applying
the differentiability assumptions on the functions $G_i(x, u, t)$, $H_i(x, u, t)$, and $q_s(x, u, t)$ one can check that
the following equalities hold true:
\begin{align} \label{eq:LimitingGamma1}
  \lim_{l \to \infty} \Gamma(x_{k_l}, u_{k_l}; x_{k_l}, u_{k_l}; V_{k_l}) &= \Gamma(x_*, u_*; x_*, u_*; V^*),
  \\ \label{eq:LimitingGamma2}
  \lim_{l \to \infty} \Gamma(x, u; x_{k_l} u_{k_l}; V_{k_l}) &= \Gamma(x, u; x_*, u_*; V^*) 
  \quad \forall (x, u) \in X_0.
\end{align}
Now, arguing by reductio ad absurdum, suppose that the statement of the theorem is false. Then $(x_*, u_*)$ is
infeasible for the problem $(\mathcal{P})$ and is not $\max\{ \varepsilon_{\varphi}, \varepsilon^* \}$-critical for the
penalty term $\varphi$. Therefore $\varphi(x_*, u_*) > \varepsilon_{\varphi}$ and $(x_*, u_*)$ is not an
$\varepsilon^*$-optimal solution of the problem
\begin{equation} \label{prob:OptimalInfeasAtLimPoint}
  \minimise_{(x, u) \in X_0} \enspace \Gamma(x, u; x_*, u_*; V^*).
\end{equation}
In other words, $\Gamma_* + \varepsilon^* < \Gamma(x_*, u_*; x_*, u_*; V^*)$, where $\Gamma_*$ is the optimal value of
problem~\eqref{prob:OptimalInfeasAtLimPoint}. Therefore, there exist $\theta > 0$ and a feasible point of this
problem $(\widehat{x}, \widehat{u})$ such that
\begin{equation} \label{eq:NonCriticalForPenTerm}
  \Gamma(\widehat{x}, \widehat{u}; x_*, u_*; V^*)  + \varepsilon^* + \theta \le \Gamma(x_*, u_*; x_*, u_*, V^*).
\end{equation}
Hence with the use of relations~\ref{eq:LimitingGamma1} and \eqref{eq:LimitingGamma2} and the definition of
$\varepsilon_*$ one obtains that there exists $l_0 \in \mathbb{N}$ such that
\[
  \Gamma(\widehat{x}, \widehat{u}; x_{k_l}, u_{k_l}; V_{k_l}) + \varepsilon_{k_l} + \frac{\theta}{2}
  \le \Gamma(x_{k_l}, u_{k_l}; x_{k_l}, u_{k_l}; V_{k_l}) \quad \forall l \ge l_0.
\]
Consequently, one has 
\begin{equation} \label{eq:InfeasMeasDecaySubseq}
  \Gamma(\widehat{x}_{k_l}, \widehat{u}_{k_l}; x_{k_l}, u_{k_l}; V_{k_l}) + \frac{\theta}{2}
  \le \Gamma(x_{k_l}, u_{k_l}; x_{k_l}, u_{k_l}; V_{k_l}) \quad \forall l \ge l_0
\end{equation}
by the definition of $(\widehat{x}_k, \widehat{u}_k)$ (see Step~2 of Algorithmic Pattern~\ref{alg:Boosted_SEP_DCA}).

Recall that by our assumption $(x_*, u_*)$ is infeasible point of the problem $(\mathcal{P})$ and
$\varphi(x_*, u_*) > \varepsilon_{\varphi}$. Fix any $\gamma \in (\varepsilon_{\varphi}, \varphi(x_*, u_*))$. As was
noted in Remark~\ref{rmrk:PenFuncContinuity}, the function $\varphi$ is lower semicontinuous in the topology of the
space $Y$. Therefore, increasing $l_0$, if necessary, one can suppose that 
$\varphi(x_{k_l}, u_{k_l}) \ge \gamma$ for any $l \ge l_0$. Recall also that by our assumption the sequence of penalty
parameters is bounded, which due to Assumption~\ref{assumpt:PenaltyIncrease} implies that there exists $c_* > 0$ such
that $c_k = c_*$ for any sufficiently large $k$. In particular, we can suppose that $c_{k_l} = c_*$ for any $l \ge l_0$.

Fix any $l \in \mathbb{N}$ and consider $k_{l}$-th iteration of Algorithmic Pattern~\ref{alg:Boosted_SEP_DCA}. Note that
from inequality \eqref{eq:InfeasMeasDecaySubseq} it follows that if Algorithmic Pattern~\ref{alg:Boosted_SEP_DCA}
executes Step 2, then Step 3 is executed as well. Therefore, there are only two possibilities.

\textbf{Case 1. Steps 2 and 3 are not executed.} In this case, 
$\Gamma(x_{k_l}[c_{k_l}], u_{k_l}[c_{k_l}]; x_{k_l}, u_{k_l}; V_{k_l}) \le \varepsilon_{\varphi}$
according to Step~1 of Algorithmic Pattern~\ref{alg:Boosted_SEP_DCA}. Consequently,
according to Step~4 of this algorithmic pattern (see inequality \eqref{eq:PenFuncDecay}) one has
\begin{align*}
  Q_{c_{k_l + 1}}(x_{k_l}[c_{k_l + 1}], u_{k_l}[c_{k_l + 1}]; x_{k_l}, u_{k_l}; V_{k_l}) 
  - Q_{c_{k_l + 1}}(x_{k_l}, u_{k_l}; x_{k_l}, u_{k_l}; V_{k_l})
  &\le c_{k_l + 1} \eta_2 \big( \varepsilon_{\varphi} - \Gamma(x_{k_l}, u_{k_l}; x_{k_l}, u_{k_l}; V_{k_l}) \big)
  \\
  &\le c_{k_l + 1} \eta_2 \big( \varepsilon_{\varphi} - \varphi(x_{k_l}, u_{k_l}) \big),
\end{align*}
where the last inequality follows from the fact that $\Gamma(\cdot; x_k, u_k; V_k)$ is a global convex majorant of
$\varphi(\cdot)$. Applying the inequality $\varphi(x_{k_l}, u_{k_l}) \ge \gamma > \varepsilon_{\varphi}$ and arguing in
the same way as in the proof of Lemma~\ref{lem:PenFunctionDecay} one can verify that the inequality above implies that
\begin{equation} \label{eq:PenFuncDecayInfeasCritical1}
  \Phi_{c_{k_l + 1}}(x_{k_l + 1}, u_{k_l + 1}) \le \Phi_{c_{k_l + 1}}(x_{k_l}, u_{k_l}) 
  - \sigma \alpha_{k_l}^2 \rho_{k_l}^2 - c_{k_l + 1} \eta_2 (\gamma - \varepsilon_{\varphi}) + \nu_{k_l}.
\end{equation}

\textbf{Case 2. Step 3 is executed.} In this case, according to the description of 
Algorithmic Pattern~\ref{alg:Boosted_SEP_DCA} (see inequality \eqref{eq:InfeasMeasDecay}) and inequality
\eqref{eq:InfeasMeasDecaySubseq} one has
\begin{multline} \label{eq:PenTermMajorantDecay_NonCritLimit}
  \Gamma(x_{k_l}[c_+], u_{k_l}[c_+]; x_{k_l}, u_{k_l}; V_{k_l}) 
  - \Gamma(x_{k_l}, u_{k_l}; x_{k_l}, u_{k_l}; V_{k_l})
  \le \eta_1 
  \Big( \Gamma(\widehat{x}_{k_l}, \widehat{u}_{k_l}; x_{k_l}, u_{k_l}; V_{k_l}) 
  - \Gamma(x_{k_l}, u_{k_l}; x_{k_l}, u_{k_l}; V_{k_l}) \Big)
  \\
  \le - \eta_1 \frac{\theta}{2}.
\end{multline}
Hence by Step~4 of Algorithmic Pattern~\ref{alg:Boosted_SEP_DCA} one has
\begin{multline*}
  Q_{c_{k_l + 1}}(x_{k_l}[c_{k_l + 1}], u_{k_l}[c_{k_l + 1}]; x_{k_l}, u_{k_l}; V_{k_l}) 
  - Q_{c_{k_l + 1}}(x_{k_l}, u_{k_l}; x_{k_l}, u_{k_l}; V_{k_l})
  \\
  \le c_{k_l + 1} \eta_2 \Big( \Gamma(x_{k_l}[c_{k_l + 1}], u_{k_l}[c_{k_l + 1}]; x_{k_l}, u_{k_l}; V_{k_l})
  - \Gamma(x_{k_l}, u_{k_l}; x_{k_l}, u_{k_l}; V_{k_l}) \Big) 
  \le - c_{k_l + 1} \eta_1 \eta_2 \frac{\theta}{2}.
\end{multline*}
Consequently, arguing in the same way as in the proof of Lemma~\ref{lem:PenFunctionDecay} one obtains that
\begin{equation} \label{eq:PenFuncDecayInfeasCritical2}
  \Phi_{c_{k_l + 1}}(x_{k_l + 1}, u_{k_l + 1}) \le \Phi_{c_{k_l + 1}}(x_{k_l}, u_{k_l}) 
  - \sigma \alpha_{k_l}^2 \rho_{k_l}^2 - c_{k_l + 1} \eta_2 \frac{\theta}{2} + \nu_{k_l}.
\end{equation}

Combining inequalities \eqref{eq:PenFuncDecayInfeasCritical1} and \eqref{eq:PenFuncDecayInfeasCritical2} and
Lemma~\ref{lem:PenFunctionDecay} one obtains that 
\begin{align*}
  \Phi_{c_*}(x_{k_{l + 1}}, u_{k_{l + 1}}) - \Phi_{c_*}(x_{k_l}, u_{k_l})
  &= \sum_{s = k_l}^{k_{l + 1}} \Big( \Phi_{c_*}(x_{s + 1}, u_{s + 1}) - \Phi_{c_*}(x_s, u_s) \Big)
  \\
  &= - \sigma \sum_{s = k_l}^{k_{l + 1}}  \alpha_s^2 \rho_s^2 + \sum_{s = k_l}^{k_{l + 1}} \nu_s
  - c_* \eta_1 \eta_2 \frac{\min\{ \gamma - \varepsilon_{\varphi}, \theta \}}{2} 
\end{align*}
for any $l \ge l_0$. By our assumptions $\sum_{k = 0}^{\infty} \nu_k < + \infty$. Therefore, for any sufficiently large
$l \in \mathbb{N}$ one has
\[
  \Phi_{c_*}(x_{k_{l + 1}}, u_{k_{l + 1}}) - \Phi_{c_*}(x_{k_l}, u_{k_l})
  \le - \sigma \sum_{s = k_l}^{k_{l + 1}} \alpha_s^2 \rho_s^2 
  - c_* \eta_1 \eta_2 \frac{\min\{ \gamma - \varepsilon_{\varphi}, \theta \}}{4}.
\]
Consequently, $\Phi_{c_*}(x_{k_l}, u_{k_l}) \to - \infty$ as $l \to \infty$, which contradicts
Assumption~\ref{assumpt:PenaltyFuncBoundedBelow}.
\end{proof}

Let us now prove an auxiliary result on the penalty term $\varphi$.

\begin{lemma} \label{lem:PenTermLipschitz}
For any set $K \subset X$ that is bounded in $L^{\infty}([0, T]; \mathbb{R}^n \times \mathbb{R}^m)$ there exists $L > 0$
such that
\[
  \big| \varphi(x_1, u_1) - \varphi(x_2, u_2) \big| \le L \big\| (x_1 - x_2, u_1, - u_2) \big\|_Y 
  \quad \forall (x_1, u_1), (x_2, u_2) \in K,
\]
that is, $\varphi$ is Lipschitz continuous with respect to the norm of the space $Y$ on any subset of the space $X$
that is bounded in the $L^{\infty}$-norm.
\end{lemma}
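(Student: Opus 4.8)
The plan is to exploit the fact that $\varphi$ is a finite sum of terms, each of which is the composition of a $1$-Lipschitz scalar map ($|\cdot|$ or $t \mapsto \max\{0,t\}$) with one of the data functions $F_i$, $\Xi_s$, or $f_i$. By the triangle inequality it suffices to bound each summand, and for this I would combine the local Lipschitz continuity of the data functions with $L^1$-norm estimates for the two integral groups and with the Sobolev embedding $W^{1,1} \hookrightarrow C$ for the two endpoint groups. First I would fix $K$ bounded in $L^{\infty}$, so that there is $R > 0$ with $|x(t)| + |u(t)| \le R$ for a.e.\ $t$ and, by continuity, $|x(0)|, |x(T)| \le R$ for all $(x,u) \in K$. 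The growth bound on subgradients in Assumption~\ref{assumpt:ContDiff} together with Thm.~24.7 in Rockafellar\cite{Rockafellar} then shows that each $(x,u) \mapsto F_i(x,u,t) = G_i - H_i$ and $(x,u) \mapsto \Xi_s(x,u,t) = p_s - q_s$ is Lipschitz on $\{|x|+|u| \le R\}$ with a constant $2\gamma_R$ independent of $t$, and each DC function $f_i = g_i - h_i$ is Lipschitz on $\{|x|+|y| \le 2R\}$ with some constant $L_f$ (this is exactly the estimate already used in the uniqueness step of the proof of Theorem~\ref{thrm:TrajectoryConvVsControlConv}).

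For the two integral groups I would use the pointwise inequalities $\big||a|-|b|\big| \le |a-b|$ and $|\max\{0,a\} - \max\{0,b\}| \le |a-b|$. For the dynamics term this gives, for each $i$ and a.e.\ $t$,
\begin{multline*}
  \Big| |\dot{x}_{1,i}(t) - F_i(x_1(t),u_1(t),t)| - |\dot{x}_{2,i}(t) - F_i(x_2(t),u_2(t),t)| \Big|
  \\
  \le |\dot{x}_{1,i}(t) - \dot{x}_{2,i}(t)| + 2\gamma_R\big( |x_1(t)-x_2(t)| + |u_1(t)-u_2(t)| \big).
\end{multline*}
Integrating over $[0,T]$ and summing over $i$ bounds this group by $\|\dot{x}_1 - \dot{x}_2\|_1 + 2 n \gamma_R (\|x_1 - x_2\|_1 + \|u_1 - u_2\|_1)$, which is at most $C_1 \|(x_1-x_2, u_1-u_2)\|_Y$. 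The mixed-constraint group is handled identically, yielding a bound $C_2 \|(x_1-x_2, u_1-u_2)\|_Y$ with no derivative contribution.

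The one nontrivial point is the endpoint groups, which involve only the pointwise values $x(0)$ and $x(T)$ and therefore cannot be controlled by an $L^1$-type norm directly. Here I would invoke the Sobolev embedding $W^{1,1}([0,T]) \hookrightarrow C([0,T])$ (Adams\cite{Adams}, Thm.~5.4; cf.\ Remark~\ref{rmrk:PenFuncContinuity}), which provides a constant $C_S$ with
\[
  |x_1(0) - x_2(0)| + |x_1(T) - x_2(T)| \le 2 C_S \|x_1 - x_2\|_{1,1} \le 2 C_S \|(x_1-x_2, u_1-u_2)\|_Y .
\]
Combining this with the $1$-Lipschitzness of $|\cdot|$ and $\max\{0,\cdot\}$ and the Lipschitz continuity of the $f_i$ on $\{|x|+|y| \le 2R\}$ bounds the inequality and equality endpoint groups together by $C_3 \|(x_1-x_2, u_1-u_2)\|_Y$. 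Adding the four estimates and setting $L = C_1 + C_2 + C_3$ completes the proof.

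I expect the endpoint terms to be the only genuine obstacle. The integral groups are routine once local Lipschitzness of the data is in hand, but the passage from the boundary values $x(0), x(T)$ to the $W^{1,1}$-norm of $x$ is precisely where the Sobolev embedding is indispensable; it is the reason the Lipschitz estimate holds with respect to the weaker $Y$-norm rather than only the $X$-norm, and hence the reason this lemma is useful for the convergence arguments that follow.
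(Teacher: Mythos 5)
Your proposal is correct and follows essentially the same route as the paper's proof: the same decomposition of $\varphi$ into dynamics, mixed-constraint, and endpoint groups, the same use of subgradient bounds plus Rockafellar's Thm.~24.7 for local Lipschitz continuity of the DC data, the pointwise $1$-Lipschitzness of $|\cdot|$ and $\max\{0,\cdot\}$ followed by integration for the integral terms, and the Sobolev embedding $W^{1,1}\hookrightarrow C$ for the endpoint terms. Nothing further is needed.
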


\begin{proof}
Fix any set $K \subset X$ that is bounded in $L^{\infty}([0, T]; \mathbb{R}^n \times \mathbb{R}^m)$. Let $R > 0$ be
such that $\| (x, u) \|_{\infty} \le R$. Note that for any $(x, u) \in K$ one has $|x(t)| \le R$ for all 
$t \in [0, T]$, since the function $x$ is absolutely continuous. We will split the rest of the proof into three parts.

\textbf{Part 1.} By Thm.~24.7 from Rockafellar\cite{Rockafellar} the functions $f_i$, 
$i \in \mathcal{I} \cup \mathcal{E}$ are Lipschitz continuous on the set 
$\{ (x, y) \in \mathbb{R}^{2n} \mid \max\{ |x|, |y| \} \le R \}$ as the differences of convex functions. Hence taking
into account the fact that by the Sobolev imbedding theorem $\max\{ |x(0)|, |x(T)| \} \le C \| x \|_{1, 1}$ for any 
$x \in W^{1, 1}([0, T]; \mathbb{R}^n)$ and some $C > 0$ independent of $x$ one obtains that the function
$\sum_{i = 1}^{\ell_I} \max\{ 0, f_i(x(0), x(T) \} + \sum_{j = \ell_I + 1}^{\ell_E} |f_j(x(0), x(T))|$ from 
the definition of $\varphi$ (see equality \eqref{eq:PenTerm}) is Lipschitz continuous with respect to the norm
$\| \cdot \|_Y$ on $K$.

\textbf{Part 2.} Fix any $s \in \mathcal{M}$. Assumption~\ref{assumpt:ContDiff} and Thm.~24.7 from
Rockafellar\cite{Rockafellar} imply that there exists $L_s > 0$ such that
\[
  \big| \Xi_s(x_1, u_1, t) - \Xi_s(x_2, u_2, t) \big| \le L_s \big( |x_1 - x_2| + |u_1 - u_2| \big)
  \quad \forall (x_i, u_i) \in \mathbb{R}^n \times \mathbb{R}^m \colon \max\{ |x_i|, |u_i| \} \le R.
\]
As one can readily see, the inequality above implies that for any $(x_i, u_i) \in \mathbb{R}^n \times \mathbb{R}^m$ with
$\max\{ |x_i|, |u_i| \} \le R$ one has
\[
  \big| \max\{ 0, \Xi_s(x_1, u_1, t) \} - \max\{ 0, \Xi_s(x_2, u_2, t) \} \big| 
  \le L_s \big( |x_1 - x_2| + |u_1 - u_2| \big),
\]
from which it obviously follows that the terms $\sum_{s = 1}^{\ell_M} \int_0^T \max\{ 0, \Xi_s(x(t), u(t), t) \} \, dt$
from the definition $\varphi$ corresponding to the mixed constraints are Lipschitz continuous with respect to the norm
$\| \cdot \|_Y$.

\textbf{Part 3.} Let us finally consider the terms $\int_0^T |\dot{x}_i(t) - F_i(x(t), u(t), t)| \, dt$ (see equality
\eqref{eq:PenTerm}). Fix any $i \in \{ 1, \ldots, n \}$. Note that Assumption~\ref{assumpt:ContDiff} along with
Thm.~24.7 from Rockafellar\cite{Rockafellar} imply that
\[
  \big| F_i(x_1, u_1, t) - F_i(x_2, u_2, t) \big| \le L \big( |x_1 - x_2| + |u_1 - u_2| \big)
  \quad \forall (x_i, u_i) \in \mathbb{R}^n \times \mathbb{R}^m \colon \max\{ |x_i|, |u_i| \} \le R.
\]
for some $L > 0$. Hence by applying the reverse triangle inequality first and the trianle inequality second one obtains
that for any such $(x_i, u_i)$ and all $y_1, y_2 \in \mathbb{R}$ the following inequalities hold true:
\begin{align*}
  \Big| \big| y_1 - F_i(x_1, u_1, t) \big| - \big| y_2 - F_i(x_2, u_2, t) \big| \Big|
  &\le \Big| y_1 - F_i(x_1, u_1, t) - \big( y_2 - F_i(x_2, u_2, t) \big) \Big|
  \\
  &\le |y_1 - y_2| + \big| F_i(x_1, u_1, t) - F_i(x_2, u_2, t) \big|
  \\
  &\le |y_1 - y_2| + L \big( |x_1 - x_2| + |u_1 - u_2| \big).
\end{align*}
Consequently, for any $(x_1, u_1) \in K$ and $(x_2, u_2) \in K$ one has
\[
  \big| \dot{x}_{1i}(t) - F_i(x_1(t), u_1(t), t) \big| - \big| \dot{x}_{2i}(t) - F_i(x_2(t), u_2(t), t) \big|
  \le |\dot{x}_{1i}(t) - \dot{x}_{2i}(t)| + L \big( |x_1(t) - x_2(t)| + |u_1(t) - u_2(t)| \big)
\]
for a.e. $t \in [0, T]$. Integrating this inequality from $0$ to $T$ one obtains that
\[
  \int_0^T \big| \dot{x}_{1i}(t) - F_i(x_1(t), u_1(t), t) \big| \, dt
  - \int_0^T \big| \dot{x}_{2i}(t) - F_i(x_2(t), u_2(t), t) \big| \, dt 
  \le \max\{ L, 1 \} \| (x_1, u_1) - (x_2, u_2) \|_Y.
\]
Swapping $(x_1, u_1)$ and $(x_2, u_2)$, one can conclude that the terms 
$\int_0^T |\dot{x}_i(t) - F_i(x(t), u(t), t)| \, dt$ are indeed Lipschitz continuous with respect to the norm 
$\| \cdot \|_Y$ on the set $K$.
\end{proof}

Next we consider the lemma corresponding to the second assumption of Theorem~\ref{thrm:InfeasMeasConvergence}.

\begin{lemma}
If under the assumptions of Theorem~\ref{thrm:InfeasMeasConvergence} the sequence $\{ (x_k, u_k) \}$ converges in $Y$ to
some $(x_*, u_*) \in X$ and the sequence of controls $\{ u_k \}$ is essentially bounded, then the point $(x_*, u_*)$ is
either feasible for the problem $(\mathcal{P})$ or $(\varepsilon_{\varphi} + \varepsilon^*)$-critical 
for the penalty term $\varphi$.
\end{lemma}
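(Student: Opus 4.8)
The plan is to dispatch the statement by an exhaustive case split according to the penalty parameter. Since on Step~1 one sets $c_+ = c_k$ and on Steps~2--4 the parameter is only ever increased, the sequence $\{ c_k \}$ is nondecreasing, hence either bounded or divergent to $+\infty$. If $\{ c_k \}$ is bounded, all hypotheses of Lemma~\ref{lem:InfeasMeasConvergence_BoundedPenParam} hold (the differentiability, $\varkappa$, $\nu_k$ and $\varepsilon_k$ assumptions are inherited from Theorem~\ref{thrm:InfeasMeasConvergence}), so the limit $(x_*, u_*)$ is feasible or $\max\{ \varepsilon_{\varphi}, \varepsilon^* \}$-critical for $\varphi$; as $\max\{ \varepsilon_{\varphi}, \varepsilon^* \} \le \varepsilon_{\varphi} + \varepsilon^*$, this already gives the claim. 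The substantive case is therefore $c_k \to +\infty$, to which the rest of the plan is devoted.

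For the divergent case I would first reproduce the convergence machinery: passing to a subsequence $\{ (x_{k_l}, u_{k_l}) \}$ along which $x_{k_l} \to x_*$ uniformly, $u_{k_l} \to u_*$ a.e., and the collected subgradients $V_{k_l}$ converge weakly in $L^1$ to some $V^*$ obeying the inclusions~\eqref{eq:LimitingSubgradients}, exactly as in Steps~1--3 of the proof of Theorem~\ref{thrm:ConvergenceToCriticalPoints}; the differentiability of $G_i, H_i, q_s$ then upgrades the relevant weak limits to strong ones and yields the limit relations~\eqref{eq:LimitingGamma1}--\eqref{eq:LimitingGamma2}. Arguing by contradiction, suppose $(x_*, u_*)$ is infeasible and not $(\varepsilon_{\varphi} + \varepsilon^*)$-critical; by the remark following Definition~\ref{def:PenTermCriticality} this forces $\varphi(x_*, u_*) > \varepsilon_{\varphi}$ together with a strict infeasibility gap $\Gamma(\widehat x, \widehat u; x_*, u_*; V^*) + \varepsilon_{\varphi} + \varepsilon^* + \theta \le \Gamma(x_*, u_*; x_*, u_*; V^*)$ for some $(\widehat x, \widehat u) \in X_0$ and $\theta > 0$. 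Transferring this gap to the iterates as in Lemma~\ref{lem:InfeasMeasConvergence_BoundedPenParam} and splitting into Case~1 (Steps~2--3 not executed, so $\Gamma(x_{k_l}[c_{k_l}]; \ldots) \le \varepsilon_{\varphi} < \gamma \le \varphi(x_{k_l}, u_{k_l})$ for a fixed $\gamma \in (\varepsilon_{\varphi}, \varphi(x_*, u_*))$) and Case~2 (Step~3 executed), Step~4 combined with Lemma~\ref{lem:PenFunctionDecay} delivers, for all large $l$, a per-iteration estimate
\[
  \Phi_{c_{k_l+1}}(x_{k_l+1}, u_{k_l+1}) \le \Phi_{c_{k_l+1}}(x_{k_l}, u_{k_l}) - c_{k_l+1}\beta + \nu_{k_l}
\]
with one constant $\beta > 0$ independent of $l$ (namely $\beta = \eta_2 \min\{ \gamma - \varepsilon_{\varphi}, \eta_1(\varepsilon_{\varphi} + \theta/4) \}$).

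The new final step is to exploit the decomposition $\Phi_c = J + c\varphi$. Writing this out in the estimate above and isolating the cost increment gives
\[
  J(x_{k_l+1}, u_{k_l+1}) - J(x_{k_l}, u_{k_l}) \le c_{k_l+1}\Big( \varphi(x_{k_l}, u_{k_l}) - \varphi(x_{k_l+1}, u_{k_l+1}) - \beta \Big) + \nu_{k_l}.
\]
Because the whole sequence converges in $Y$ and $\{ u_k \}$ is essentially bounded, Remark~\ref{rmrk:PenFuncContinuity} yields $\varphi(x_k, u_k) \to \varphi(x_*, u_*)$ and $\Phi_c(x_k, u_k) \to \Phi_c(x_*, u_*)$, hence $J(x_k, u_k) \to J(x_*, u_*)$. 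Consequently the left-hand side tends to $0$, the bracket tends to $-\beta$ and is $\le -\beta/2$ for $l$ large, while $\nu_{k_l} \to 0$ and $c_{k_l+1} \to +\infty$; thus the right-hand side diverges to $-\infty$ while the left-hand side stays bounded, a contradiction that completes the proof.

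The main obstacle is precisely that the contradiction mechanism of Lemma~\ref{lem:InfeasMeasConvergence_BoundedPenParam}---telescoping the values of one fixed penalty function $\Phi_{c_*}$ and invoking Assumption~\ref{assumpt:PenaltyFuncBoundedBelow}---is no longer available when $c_k \to +\infty$, since the penalty function is a moving target and $\Phi_{c_k}(x_k, u_k)$ need not remain bounded. The resolution is the splitting $\Phi_c = J + c\varphi$: along a $Y$-convergent sequence the increments of $J$ and of $\varphi$ both vanish, so the forced decrease $-c_{k_l+1}\beta$ can only be sustained by driving $J$ to $-\infty$, which contradicts the convergence of $J$. The only delicate point is that $J$ and $\varphi$ must be genuinely continuous along the sequence in the weak $Y$-topology, and this is exactly what the essential boundedness of $\{ u_k \}$ secures through Remark~\ref{rmrk:PenFuncContinuity} (equivalently, through the $L^1$-Lipschitz continuity of $\varphi$ on $L^{\infty}$-bounded sets proved in Lemma~\ref{lem:PenTermLipschitz}).
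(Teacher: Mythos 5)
Your argument is essentially sound, but it takes a genuinely different route from the paper's. The paper never touches the penalty function $\Phi_c$ in this lemma and never splits on the boundedness of $\{c_k\}$: it works entirely at the level of $\varphi$ and its majorant $\Gamma$. Concretely, it first shows that Step~2 must eventually be executed on every iteration (otherwise Lemma~\ref{lem:InfeasMeasBehaviour}, the bound $\varkappa\ge 2$ and the Lipschitz continuity of $\varphi$ from Lemma~\ref{lem:PenTermLipschitz} force $\varphi(x_*,u_*)\le\varepsilon_{\varphi}+\varepsilon^*$, contradicting non-criticality); then either Step~3 is skipped infinitely often, in which case $(x_{k_l},u_{k_l})$ is $\varepsilon_{k_l}$-critical for $\varphi$ and criticality passes to the limit via \eqref{eq:LimitingGamma1}--\eqref{eq:LimitingGamma2}, or Steps~2--3 are eventually always executed, in which case the forced decrease of $\Gamma$, transferred to $\varphi$ through \eqref{eq:PenTermConvexMajorant} and Lemma~\ref{lem:PenTermLipschitz} together with the Cauchy property of the $Y$-convergent sequence, drives $\varphi(x_{k_l},u_{k_l})\to-\infty$, which is impossible since $\varphi\ge 0$. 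Your alternative for the divergent case --- extracting a per-iteration decrease $-c_{k_l+1}\beta$ of $\Phi_{c_{k_l+1}}$ and then splitting $\Phi_c=J+c\varphi$ so that the continuity of $J$ and $\varphi$ along the convergent sequence (Remark~\ref{rmrk:PenFuncContinuity}) makes the right-hand side diverge to $-\infty$ while the left-hand side stays bounded --- is a legitimate and rather elegant way to turn $c_k\to+\infty$ into an asset rather than an obstacle; the price is the extra case split and the delegation of the bounded case to Lemma~\ref{lem:InfeasMeasConvergence_BoundedPenParam}, whereas the paper's $\varphi$-level argument is uniform in $c_k$ and needs only the lower bound $\varphi\ge 0$ rather than Assumption~\ref{assumpt:PenaltyFuncBoundedBelow} or the convergence of $J$. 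One detail you should repair: in your Case~1 (Steps~2--3 skipped) the bound available for $\Gamma(x_{k_l}[c_{k_l+1}],u_{k_l}[c_{k_l+1}];x_{k_l},u_{k_l};V_{k_l})$ after a possible increase of the penalty parameter on Step~4 is $\varepsilon_{\varphi}+\varepsilon_{k_l}$ (by Lemma~\ref{lem:InfeasMeasBehaviour} with $\varkappa\ge 2$), not $\varepsilon_{\varphi}$, so $\gamma$ must be chosen in $(\varepsilon_{\varphi}+\varepsilon^*,\varphi(x_*,u_*))$ rather than $(\varepsilon_{\varphi},\varphi(x_*,u_*))$ for the constant $\beta$ to be positive and uniform in $l$; this choice is available precisely because non-$(\varepsilon_{\varphi}+\varepsilon^*)$-criticality yields $\varphi(x_*,u_*)>\varepsilon_{\varphi}+\varepsilon^*$, and it is exactly the reason the lemma asserts $(\varepsilon_{\varphi}+\varepsilon^*)$-criticality instead of the sharper $\max\{\varepsilon_{\varphi},\varepsilon^*\}$ of the bounded-parameter case.
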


\begin{proof}
Arguing by reductio ad absurdum, suppose that the point $(x_*, u_*)$ is infeasible for the problem $(\mathcal{P})$, but
is not $(\varepsilon_{\varphi} + \varepsilon^*)$-critical for the penalty term $\varphi$. Then, in particular,
\begin{equation} \label{eq:VarphiVsVarespilon}
  \varphi(x_*, u_*) > \varepsilon_{\varphi} + \varepsilon^*.
\end{equation}
For the sake of convenience, we divide the rest of the proof into two parts.

\textbf{Part~1.} Let us show that there exists $k_1 \in \mathbb{N}$ such that for any $k \ge k_1$ Algorithmic
Pattern~\ref{alg:Boosted_SEP_DCA} executes Step~2 on iteration $k$. Indeed, suppose that this claim is false. Then 
there exists a subsequence $\{ (x_{k_l}, u_{k_l}) \}$ such that Step~2 is not executed on iteration $k_l$ for any 
$l \in \mathbb{N}$. According to Step~1 it means that 
$\Gamma(x_{k_l}[c_{k_l}], u_{k_l}[c_{k_l}]; x_{k_l}, u_{k_l}; V_{k_l}) \le \varepsilon_{\varphi}$.
Note that if $c_{k_l + 1} > c_{k_l}$ (that is, the penalty parameter is increased on Step~4), then
$c_{k_l + 1} \ge c_{k_l} + 2$ by our assumption on the parameter $\varkappa$ from 
Assumption~\ref{assumpt:PenaltyIncrease}. Therefore, applying the first statement of Lemma~\ref{lem:InfeasMeasBehaviour}
and inequalities \eqref{eq:PenTermConvexMajorant} one gets that
\begin{equation} \label{eq:SmallInfeasMeasure}
  \varphi(x_{k_l}[c_{k_l + 1}], u_{k_l}[c_{k_l + 1}]) \le 
  \Gamma(x_{k_l}[c_{k_l + 1}], u_{k_l}[c_{k_l + 1}]; x_{k_l}, u_{k_l}; V_{k_l})
  \le \Gamma(x_{k_l}[c_{k_l}], u_{k_l}[c_{k_l}]; x_{k_l}, u_{k_l}; V_{k_l}) + \varepsilon_{k_l}
  \le \varepsilon_{\varphi} + \varepsilon_{k_l}
\end{equation}
for any $l \in \mathbb{N}$.

Recall that by our assumption the sequence $\{ u_k \}$ is essentially bounded, while the essential boundedness of the
sequence $\{ x_k \}$ follows from the fact that it converes to $x_*$ in $W^{1, 1}([0, T]; \mathbb{R}^n)$ and the
Sobolev imbedding theorem. By definition one has 
$(x_k[c_{k + 1}], u_k[c_{k + 1}]) \in \co\{ (x_k, u_k), (x_{k + 1}, u_{k + 1}) \}$ (see Step~5 of Algorithmic
Pattern~\ref{alg:Boosted_SEP_DCA}). Therefore, the sequences $\{ x_k[c_k] \}$ and $\{ u_k[c_k] \}$ are essentially
bounded as well. Hence with the use of Lemma~\ref{lem:PenTermLipschitz} and inequality \eqref{eq:SmallInfeasMeasure} 
one gets that there exists $L > 0$ such that for any $l \in \mathbb{N}$ one has
\begin{align*}
  \varphi(x_{k_l + 1}, u_{k_l + 1}) 
  &\le L \big\| (x_{k_l + 1} - x_{k_l}[c_{k_l + 1}], u_{k_l + 1} - u_{k_l}[c_{k_l + 1}]) \big\|_Y 
  + \varphi(x_{k_l}[c_{k_l + 1}], u_{k_l}[c_{k_l + 1}])
  \\
  &= L \big\| (x_{k_l + 1} - x_{k_l}[c_{k_l + 1}], u_{k_l + 1} - u_{k_l}[c_{k_l + 1}]) \big\|_Y 
  + \varepsilon_{\varphi} + \varepsilon_{k_l}
\end{align*}
Observe that by definition
\begin{multline} \label{eq:LineSearchStepNorm}
  \big\| (x_{k_l + 1} - x_{k_l}[c_{k_l + 1}], u_{k_l + 1} - u_{k_l}[c_{k_l + 1}]) \big\|_Y
  = \alpha_{k_l} \big\| (x_{k_l}[c_{k_l + 1}] - x_{k_l}, u_{k_l}[c_{k_l + 1}] - u_{k_l}) \big\|_Y
  \\
  \le (1 + \alpha_{k_l}) \big\| (x_{k_l}[c_{k_l + 1}] - x_{k_l}, u_{k_l}[c_{k_l + 1}] - u_{k_l}) \big\|_Y
  = \big\| (x_{k_l + 1} - x_{k_l}, u_{k_l + 1} - u_{k_l}) \big\|_Y.
\end{multline}
(see Step~5 of Algorithmic Pattern~\ref{alg:Boosted_SEP_DCA}). Thus, for any $l \in \mathbb{N}$ one has 
$\varphi(x_{k_{l + 1}}, u_{k_{l + 1}}) \le L \| (x_{k_l + 1} - x_{k_l}, u_{k_l + 1} - u_{k_l}) \|_Y +
\varepsilon_{\varphi} + \varepsilon_{k_l}$. Hence taking into account the fact that the sequence $\{ (x_k, u_k) \}$ (and
therefore any its subsequence) converges in $Y$ to $(x_*, u_*)$ and $\varphi$ is l.s.c. with respect to the topology of
the space $Y$ by Remark~\ref{rmrk:PenFuncContinuity} one obtains that 
$\varphi(x_*, u_*) \le \varepsilon_{\varphi} + \varepsilon_*$, which contradicts \eqref{eq:VarphiVsVarespilon}. Thus,
there exists $k_1 \in \mathbb{N}$ such that for any $k \ge k_1$ Algorithmic Pattern~\ref{alg:Boosted_SEP_DCA} executes
Step~2 on iteration $k$.

\textbf{Part~2.} Let us consider two cases corresponding to two possible types of behaviour of B-STEP-DCA.

\textbf{Case 1.} The inequality 
$\Gamma(\widehat{x}_k, \widehat{u}_k; x_k, u_k; V_k) \ge \Gamma(x_k, u_k; x_k, u_k; V_k)$ is
satisfied for an infinite number of points in the sequence $\{ (x_k, u_k) \}_{k \ge k_1}$ (that is, Step~3 is
\textit{not} executed on an infinite number of iterations). In other words, there exists a subsequence 
$\{ (x_{k_l}, u_{k_l}) \}$ such that $\Gamma(\widehat{x}_{k_l}, \widehat{u}_{k_l}; x_{k_l}, u_{k_l}; V_{k_l}) 
\ge \Gamma(x_{k_l}, u_{k_l}; x_{k_l}, u_{k_l}; V_{k_l})$
for any $l \in \mathbb{N}$, which by the definition of $(\widehat{x}_{k_l}, \widehat{u}_{k_l})$ means that
\begin{equation} \label{eq:CriticalPenTermSubsequence}
  \Gamma(x_{k_l}, u_{k_l}; x_{k_l}, u_{k_l}; V_{k_l}) 
  \le \inf_{(x, u) \in X_0} \Gamma(x, u; x_{k_l}, u_{k_l}; V_{k_l}) + \varepsilon_{k_l}
  \le  \Gamma(x, u; x_{k_l}, u_{k_l}; V_{k_l}) + \varepsilon_{k_l}
  \quad \forall (x, u) \in X_0.
\end{equation}
Now, arguing in the same way as in the proof of Theorem~\ref{thrm:ConvergenceToCriticalPoints} (see also
Lemma~\ref{lem:InfeasMeasConvergence_BoundedPenParam}) and replacing, if necessary, 
the sequence $\{ (x_{k_l}, u_{k_l}) \}$ by its subsequence one can suppose that the corresponding sequence of 
subgradients $V_{k_l}$ weakly converges to some $V^*$ satisfying conditions \eqref{eq:LimitingCollectedSubgradients} and
\eqref{eq:LimitingSubgradients}. Furthermore, equalities \eqref{eq:LimitingGamma1} and \eqref{eq:LimitingGamma2} also
hold true. Passing to the limit superior in \eqref{eq:CriticalPenTermSubsequence} with the use of these equalities one
obtains that
\[
  \Gamma(x_*, u_*; x_*, u_*; V^*) \le \Gamma(x, u; x_*, u_*; V^*) + \varepsilon^* \quad \forall (x, u) \in X_0,
\]
which by definition means that $(x_*, u_*)$ is an $\varepsilon^*$-critical point of $\varphi$.

\textbf{Case 2.} There exists $k_2 \ge k_1$ such that for any $k \ge k_2$ Algorithmic Pattern~\ref{alg:Boosted_SEP_DCA}
executes Step~2 and $\Gamma(\widehat{x}_k, \widehat{u}_k; x_k, u_k; V_k) < \Gamma(x_k, u_k; x_k, u_k; V_k)$ (that is,
for any $k \ge k_2$ both Step 2 and Step 3 are executed).

Once again, arguing in the same way as in the proof of Theorem~\ref{thrm:ConvergenceToCriticalPoints} and replacing, if
necessary, the sequence  $\{ (x_k, u_k) \}$ by its subsequence $\{ (x_{k_l}, u_{k_l}) \}$ one can suppose that 
the corresponding sequence of subgradients $V_{k_l}$ weakly converges to some $V^*$ satisfying
\eqref{eq:LimitingCollectedSubgradients} and \eqref{eq:LimitingSubgradients}, and, furthermore, equalities
\eqref{eq:LimitingGamma1} and \eqref{eq:LimitingGamma2} hold true.

By our assumption $(x_*, u_*)$ is not an $(\varepsilon_{\varphi} + \varepsilon^*)$-critical point of the penalty term
$\varphi$. Hence, in particular, $(x_*, u_*)$ is not an $\varepsilon^*$-optimal solution of problem
\eqref{prob:OptimalInfeasAtLimPoint}. Therefore,
there exist $(\widehat{x}, \widehat{u}) \in X_0$ and $\theta > 0$ such that inequality \eqref{eq:NonCriticalForPenTerm}
holds true. Then applying equalities \eqref{eq:LimitingGamma1} and \eqref{eq:LimitingGamma2} one gets that there exists
$l_0 \in \mathbb{N}$ such that inequality \eqref{eq:InfeasMeasDecaySubseq} is satisfied for any $l \ge l_0$. Hence
taking into account the facts that Step~3 is executed on iteration $k$ for any $k \ge k_2$ and
Assumption~\ref{assumpt:Step3and4Consistency} holds true, one obtains that the inequality 
\begin{multline*} \label{eq:PenTermMajorantDecay_NonCritLimit_Mod}
  \Gamma(x_{k_l}[c_{k_l + 1}], u_{k_l}[c_{k_l + 1}]; x_{k_l}, u_{k_l}; V_{k_l}) 
  - \Gamma(x_{k_l}, u_{k_l}; x_{k_l}, u_{k_l}; V_{k_l})
  \\
  \le \eta_1 
  \Big( \Gamma(\widehat{x}_{k_l}, \widehat{u}_{k_l}; x_{k_l}, u_{k_l}; V_{k_l}) 
  - \Gamma(x_{k_l}, u_{k_l}; x_{k_l}, u_{k_l}; V_{k_l}) \Big)
  \le - \eta_1 \frac{\theta}{2}.
\end{multline*}
is satisfied for any $l \ge l_0$. Therefore, by inequalities \eqref{eq:PenTermConvexMajorant} one has
\[
  \varphi(x_{k_l}[c_{k_l + 1}], u_{k_l}[c_{k_l + 1}]) - \varphi(x_{k_l}, u_{k_l}) \le - \eta_1 \frac{\theta}{2} 
  \quad \forall l \ge l_0.
\]
Hence with the use of Lemma~\ref{lem:PenTermLipschitz} one gets that
\[
  \varphi(x_{k_l + 1}, u_{k_l + 1}) - \varphi(x_{k_l}, u_{k_l}) \le - \eta_1 \frac{\theta}{2} + 
  L \big\| (x_{k_l + 1} - x_{k_l}[c_{k_l + 1}], u_{k_l + 1} - u_{k_l}[c_{k_l + 1}]) \big\|_Y.
\]
for any $l \ge l_0$. Consequently, applying inequalities \eqref{eq:LineSearchStepNorm} and
Lemma~\ref{lem:PenTermLipschitz} once again one obtains that
\begin{align*}
  \varphi(x_{k_{l + 1}}, u_{k_{l + 1}}) - \varphi(x_{k_l}, u_{k_l})
  &= \big( \varphi(x_{k_{l + 1}}, u_{k_{l + 1}}) - \varphi(x_{k_l + 1}, u_{k_l + 1}) \big)
  + \big( \varphi(x_{k_l + 1}, u_{k_l + 1}) - \varphi(x_{k_l}, u_{k_l})\big)
  \\
  &\le L \big\| (x_{k_{l + 1}} - x_{k_l + 1}, u_{k_{l + 1}} - u_{k_l + 1} \big\|_Y
  + L \big\| (x_{k_l + 1} - x_{k_l}, u_{k_l + 1} - u_{k_l}) \big\|_Y - \eta_1 \frac{\theta}{2}
\end{align*}
for any $l \ge l_0$. By our assumption the sequence $\{ (x_k, u_k) \}$ converges in $Y$ to $(x_*, u_*)$. Therefore, 
the subsequence $\{ (x_{k_l}, u_{k_l}) \}$ also converges in $Y$, which implies that it is a Cauchy sequence. Therefore,
there exists $l_1 \ge l_0$ such that 
\[
  \big\| (x_{k_{l + 1}} - x_{k_l + 1}, u_{k_{l + 1}} - u_{k_l + 1} \big\|_Y \le \frac{\eta_1 \theta}{L 8}, \quad
  \big\| (x_{k_l + 1} - x_{k_l}, u_{k_l + 1} - u_{k_l}) \big\|_Y \le \frac{\eta_1 \theta}{L 8} 
  \quad \forall l \ge l_1.
\]
Therefore, for any $l \ge l_1$ one has 
$\varphi(x_{k_{l + 1}}, u_{k_{l + 1}}) - \varphi(x_{k_l}, u_{k_l}) \le - \eta_1 \theta / 4$. Thus,
$\varphi(x_{k_l}, u_{k_l}) \to - \infty$ as $l \to \infty$, which is obviously impossible.
\end{proof}

Finally, let us consider the lemma corresponding to the third assumption of Theorem~\ref{thrm:InfeasMeasConvergence}.

\begin{lemma}
If under the assumptions of Theorem~\ref{thrm:InfeasMeasConvergence} 
$\sum_{k = 0}^{\infty} \max\{ 0, \varphi(x_k[c_{k + 1}], u_k[c_{k + 1}]) - \varphi(x_k, u_k) \} < + \infty$ and
$\overline{\alpha}_k = 0$ for all $k \in \mathbb{N}$, then limit points $(x_*, u_*) \in X$ of the sequence 
$\{ (x_k, u_k) \}$ in the topology of the space $Y$ (if exist) are either feasible for the problem $(\mathcal{P})$ or
$(\varepsilon^* + \varepsilon_{\varphi})$-critical for the penalty term $\varphi$.
\end{lemma}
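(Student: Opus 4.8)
The plan is to argue by contradiction, reusing the template of the preceding lemma but replacing the penalty function $\Phi_{c_k}$ --- no longer available as a descent certificate, since $\{c_k\}$ need not be bounded here --- by the penalty term $\varphi$ itself, whose positive increments are now controlled by the third assumption. The decisive simplification is that, with $\overline{\alpha}_k = 0$, no line search is performed, so $(x_{k+1}, u_{k+1}) = (x_k[c_{k+1}], u_k[c_{k+1}])$ for every $k$; hence Lemma~\ref{lem:PenTermLipschitz} is not needed and every decrease of the infeasibility measure forced on Steps~2--4 is literally a decrease of $\varphi$ along the iterates. As an unconditional preliminary I would show that $\{\varphi(x_k, u_k)\}$ converges: writing $a_k := \varphi(x_{k+1}, u_{k+1}) - \varphi(x_k, u_k)$, the assumption reads $\sum_k \max\{0, a_k\} < +\infty$, so the sequence $e_N := \varphi(x_N, u_N) + \sum_{k \ge N}\max\{0, a_k\}$ is nonincreasing and bounded below by $0$, whence $e_N$ converges and, the tail of the series vanishing, $\varphi(x_k, u_k) \to E$ for some $E \ge 0$; in particular $a_k \to 0$.

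Now suppose, for contradiction, that a limit point $(x_*, u_*) \in X$ obtained along a subsequence $\{(x_{k_l}, u_{k_l})\}$ converging in $Y$ is infeasible and not $(\varepsilon_\varphi + \varepsilon^*)$-critical for $\varphi$. Then $\varphi(x_*, u_*) > \varepsilon_\varphi + \varepsilon^*$, and the lower semicontinuity of $\varphi$ in the topology of $Y$ (Remark~\ref{rmrk:PenFuncContinuity}) gives $E \ge \varphi(x_*, u_*) > \varepsilon_\varphi + \varepsilon^*$. I would then show that Step~2 is executed for all large $k$: were it skipped along a subsequence, Step~1 would give $\Gamma(x_{k_l}[c_{k_l}], \ldots) \le \varepsilon_\varphi$, so by the first statement of Lemma~\ref{lem:InfeasMeasBehaviour} (exploiting $\varkappa \ge 2$ exactly as in \eqref{eq:SmallInfeasMeasure}) and \eqref{eq:PenTermConvexMajorant} one gets $\varphi(x_{k_l+1}, u_{k_l+1}) = \varphi(x_{k_l}[c_{k_l+1}], \ldots) \le \varepsilon_\varphi + \varepsilon_{k_l}$; passing to the limit and using $\varphi(x_k, u_k) \to E$ together with $\limsup_k \varepsilon_k = \varepsilon^*$ yields $E \le \varepsilon_\varphi + \varepsilon^*$, a contradiction.

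With Step~2 eventually always executed, I would split into the same two cases as before. If Step~3 is skipped infinitely often, then along a subsequence $(x_{k_l}, u_{k_l})$ is $\varepsilon_{k_l}$-optimal for its own optimal feasibility problem, that is \eqref{eq:CriticalPenTermSubsequence} holds; invoking the weak convergence of the subgradient collections $V_{k_l}$ to an admissible limit $V^*$ and the value convergences \eqref{eq:LimitingGamma1}--\eqref{eq:LimitingGamma2}, and passing to the limit superior, shows $(x_*, u_*)$ is $\varepsilon^*$-critical, hence $(\varepsilon_\varphi + \varepsilon^*)$-critical, for $\varphi$ --- contradicting the standing hypothesis. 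Otherwise Step~3 is executed for all $k \ge k_2$; the non-criticality of $(x_*, u_*)$ supplies $(\widehat{x}, \widehat{u}) \in X_0$ and $\theta > 0$ satisfying \eqref{eq:NonCriticalForPenTerm}, whence \eqref{eq:InfeasMeasDecaySubseq} holds along the subsequence, and combining \eqref{eq:InfeasMeasDecay} with Assumption~\ref{assumpt:Step3and4Consistency}, the majorant relations \eqref{eq:PenTermConvexMajorant}, and $x_{k_l+1} = x_{k_l}[c_{k_l+1}]$ I would obtain $\varphi(x_{k_l+1}, u_{k_l+1}) - \varphi(x_{k_l}, u_{k_l}) \le -\eta_1 \theta / 2$ for all large $l$. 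This gives $a_{k_l} \le -\eta_1 \theta / 2$ along an infinite subsequence, contradicting $a_k \to 0$ (equivalently, forcing $\varphi(x_k, u_k) \to -\infty$, which is impossible since $\varphi \ge 0$).

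The main obstacle is not the contradiction mechanism, which is elementary once the convergence of $\{\varphi(x_k, u_k)\}$ is in hand, but the limit-passage yielding \eqref{eq:LimitingGamma1}--\eqref{eq:LimitingGamma2} together with $V^*$ being a genuine collection of subgradients at $(x_*, u_*)$: this is where the weak $L^1$-compactness of the subgradient sequences, Mazur's lemma, and the differentiability hypotheses on $G_i$, $H_i$, $q_s$ (needed because the max operator is not weakly sequentially continuous) all enter. Since this machinery is developed essentially verbatim in Steps~1--4 of the proof of Theorem~\ref{thrm:ConvergenceToCriticalPoints}, I would import it rather than reprove it, so the genuinely new content here is the substitution of the $\Phi_{c_k} \to -\infty$ divergence argument by the telescoping-and-summability argument for $\varphi$, which the hypothesis $\overline{\alpha}_k = 0$ makes available.
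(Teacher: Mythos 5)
Your proof is essentially sound and follows the same contradiction mechanism as the paper's: the non-criticality of $(x_*,u_*)$ produces, via \eqref{eq:NonCriticalForPenTerm} and the limit passages \eqref{eq:LimitingGamma1}--\eqref{eq:LimitingGamma2}, the uniform gap \eqref{eq:InfeasMeasDecaySubseq}, which together with Assumption~\ref{assumpt:Step3and4Consistency}, the majorant property \eqref{eq:PenTermConvexMajorant}, and the absence of line search forces $\varphi(x_{k_l+1},u_{k_l+1})-\varphi(x_{k_l},u_{k_l})\le-\eta_1\theta/2$. Where you genuinely deviate, and to good effect, is the preliminary observation that the summability hypothesis makes $\{\varphi(x_k,u_k)\}$ convergent. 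The paper instead handles the iterations on which Steps~2--3 are skipped pointwise (obtaining a decrease by $-\gamma/2$ there from the $\varkappa\ge2$ estimate and the l.s.c.\ lower bound $\varphi(x_{k_l},u_{k_l})\ge\varepsilon_\varphi+\varepsilon_{k_l}+\gamma/2$) and then telescopes between consecutive indices $k_l$ and $k_{l+1}$, using the summability of the positive increments to drive $\varphi(x_{k_l},u_{k_l})\to-\infty$. Your version rules out the ``Step~2 skipped'' case once and for all and replaces the telescoping by the cleaner ``$a_k\to0$'' contradiction; the two uses of the hypothesis are equivalent in substance.

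The one place you must tighten is the dichotomy on Step~3. As written (``if Step~3 is skipped infinitely often''), the iterations on which Step~3 is skipped form a subsequence of \emph{all} indices, which need not converge to $(x_*,u_*)$ --- unlike in the preceding lemma, here only a subsequence of $\{(x_k,u_k)\}$ is assumed to converge --- so passing to the limit in \eqref{eq:CriticalPenTermSubsequence} would establish approximate criticality of some \emph{other} limit point, not of $(x_*,u_*)$. Fortunately the branch is vacuous: for $l\ge l_1$ inequality \eqref{eq:InfeasMeasDecaySubseq} already gives $\Gamma(\widehat{x}_{k_l},\widehat{u}_{k_l};x_{k_l},u_{k_l};V_{k_l})<\Gamma(x_{k_l},u_{k_l};x_{k_l},u_{k_l};V_{k_l})$, so whenever Step~2 is executed on iteration $k_l$ (which you have shown happens for all large $k$), Step~3 is executed as well. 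Restricting the case analysis to the subsequence $\{k_l\}$ --- as the paper does --- removes the issue, and your final estimate $a_{k_l}\le-\eta_1\theta/2$ then holds for all large $l$, contradicting $a_k\to0$.
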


\begin{proof}
Arguing by reductio ad absurdum, suppose that there exists a limit point $(x_*, u_*) \in X$ of the sequence 
$\{ (x_k, u_k) \}$ in the topology of the space $Y$ that is infeasible for the problem $(\mathcal{P})$, but is not 
$(\varepsilon^* + \varepsilon_{\varphi})$-critical for the penalty term $\varphi$. Then one can find $\gamma > 0$ such
that
\[
  \varphi(x_*, u_*) \ge \varepsilon_{\varphi} + \varepsilon^* + \gamma
\] 
and there exists a subsequence $\{ (x_{k_l}, u_{k_l}) \}$ converging to $(x_*, u_*)$ in $Y$. Hence by 
Remark~\ref{rmrk:PenFuncContinuity} there exists $l_0 \in \mathbb{N}$ such that
\begin{equation} \label{eq:InfeasMeasureNoncrit_LowerBound}
  \varphi(x_{k_l}, u_{k_l}) \ge \varepsilon_{\varphi} + \varepsilon_{k_l} + \frac{\gamma}{2}
\end{equation}
for any $l \ge l_0$.

Arguing in the same way as in the proof of Theorem~\ref{thrm:ConvergenceToCriticalPoints} (see also
Lemma~\ref{lem:InfeasMeasConvergence_BoundedPenParam}) and replacing, if necessary, 
the sequence $\{ (x_{k_l}, u_{k_l}) \}$ by its subsequence one can suppose that the corresponding sequence of 
subgradients $V_{k_l}$ weakly converges to some $V^*$ satisfying conditions \eqref{eq:LimitingCollectedSubgradients} and
\eqref{eq:LimitingSubgradients}. Furthermore, equalities \eqref{eq:LimitingGamma1} and \eqref{eq:LimitingGamma2} also
hold true.

From the fact that $(x_*, u_*)$ is not $(\varepsilon^* + \varepsilon_{\varphi})$-critical for $\varphi$ it follows that
there exists $(\widehat{x}, \widehat{u}) \in X_0$ and $\theta > 0$ such that inequality
\eqref{eq:NonCriticalForPenTerm} holds true. Hence with the use of equalities \eqref{eq:LimitingGamma1} and
\eqref{eq:LimitingGamma2} one obtains that there exists $l_1 \ge l_0$ such that inequality
\eqref{eq:InfeasMeasDecaySubseq} is satisfied for any $l \ge l_1$. Fix any $l \ge l_1$ and consider two cases.

\textbf{Case 1. Steps 2 and 3 are not executed on iteration $k_l$.} According to Step~1 of Algorithmic 
Pattern~\ref{alg:Boosted_SEP_DCA} in this case one has 
$\Gamma(x_{k_l}[c_{k_l}], u_{k_l}[c_{k_l}]; x_{k_l}, u_{k_l}; V_{k_l}) < \varepsilon_{\varphi}$ and the method jumps to
Step 4. Recall that if the penalty parameter is increased on Step 4, then by our assumption one has 
$c_{k_l + 1} > c_{k_l} + 2$. Consequently, with the use of the first statement of Lemma~\ref{lem:InfeasMeasBehaviour}
and inequalities \eqref{eq:PenTermConvexMajorant} one obtains that
\[
  \varphi(x_{k_l + 1}, u_{k_l + 1}) 
  \le \Gamma(x_{k_l}[c_{k_l + 1}], u_{k_l}[c_{k_l + 1}]; x_{k_l}, u_{k_l}; V_{k_l})
  \le \Gamma(x_{k_l}[c_{k_l}], u_{k_l}[c_{k_l}]; x_{k_l}, u_{k_l}; V_{k_l}) + \varepsilon_{k_l}
  \le \varepsilon_{\varphi} + \varepsilon_{k_l}.
\]
(here we used the fact that $(x_{k_l + 1}, u_{k_l + 1}) = (x_{k_l}[c_{k_l + 1}], u_{k_l}[c_{k_l + 1}])$, since the line
search is not employed). Hence taking into account inequality \eqref{eq:InfeasMeasureNoncrit_LowerBound} one gets that
$\varphi(x_{k_l + 1}, u_{k_l + 1}) - \varphi(x_{k_l}, u_{k_l}) \le \gamma/2$.

\textbf{Case 2. Step 2 is executed on iteration $k_l$.} Note that from the validity of inequality
\eqref{eq:InfeasMeasDecaySubseq} it follows that in this case Step 3 is executed as well. Consequently, due to
Assumption~\ref{assumpt:Step3and4Consistency} inequality \eqref{eq:PenTermMajorantDecay_NonCritLimit} holds true,
which by virtue of inequalities \eqref{eq:PenTermConvexMajorant} and the fact that the line search is not employed
implies that
\[
  \varphi(x_{k_l + 1}, u_{k_l + 1}) - \varphi(x_{k_l}, u_{k_l}) \le 
  \Gamma(x_{k_l}[c_{k_l + 1}], u_{k_l}[c_{k_l + 1}]; x_{k_l}, u_{k_l}; V_{k_l}) 
  - \Gamma(x_{k_l}, u_{k_l}; x_{k_l}, u_{k_l}; V_{k_l})
  \le - \eta_1 \frac{\theta}{2}.
\]
for any $l \ge l_0$.

Combining the two cases together one gets that for any $l \ge l_1$ the following inequalities holds true:
\begin{align*}
  \varphi(x_{k_{l + 1}}, u_{k_{l + 1}}) - \varphi(x_{k_l}, u_{k_l})
  &= \sum_{s = {k_l + 1}}^{k_{l + 1} - 1} \big( \varphi(x_{s + 1}, u_{s + 1}) - \varphi(x_s, u_s) \big)
  + \varphi(x_{k_l + 1}) - \varphi(x_{k_l})
  \\
  &\le \sum_{s = k_l + 1}^{\infty} \max\big\{ 0, \varphi(x_{s + 1}, u_{s + 1}) - \varphi(x_s, u_s) \big\}
  - \frac{\min\{ \gamma, \eta_1 \theta \}}{2}.
\end{align*}
By our assumption $\sum_{k = 0}^{\infty} \max\{ 0, \varphi(x_{k + 1}, u_{k + 1}) - \varphi(x_k, u_k) \} < + \infty$.
Therefore, for any sufficiently large $l \in \mathbb{N}$ one has
$\varphi(x_{k_{l + 1}}, u_{k_{l + 1}}) - \varphi(x_{k_l}, u_{k_l}) \le - \min\{ \gamma, \eta_1 \theta \}/4$.
Consequently, $\varphi(x_{k_l}, u_{k_l}) \to - \infty$ as $l \to \infty$, which is impossible.
\end{proof}

\section{Numerical experiments}
\label{sect:NumericalExperiments}

Let us present some results of numerical experiments illustrating the overall performance of several different version
of B-STEP-DCA for nonsmooth optimal control problems. Algorithmic Pattern~\ref{alg:Boosted_SEP_DCA} was implemented in
\textsc{Matlab} on a 3.7 GHz Intel(R) Core(TM) i3 machine with 16 GB of RAM. The parameters of B-STEP-DCA were chosen as
follows:
\[
  c_0 = 10, \quad \eta_1 = \eta_2 = \sigma = 0.1, \quad \zeta = 0.5, \quad
  \varepsilon_{\varphi} = 0.1, \quad \varepsilon_{feas} = 0.01.
\]
The penalty parameter was increased by the factor $\rho = 10$, each time the corresponding inequality was not satisfied.
The line search tolerances $\nu_k$ were chosen according to the strategy $(S3)$ as 
$\nu_k = 0.1 \| (x_k[c_{k + 1}] - x_k, u_k[c_{k + 1}] - u_k) \|_2^2 / (k + 1)$ for all $k \in \mathbb{N}$. The trial
step sizes $\overline{\alpha}_k$ were chosen according to the self-adaptive rule with $\gamma = 0.5$ and
$\overline{\alpha}_0 = 1$ (see Subsect.~\ref{subsect:TrialStepSize}). Finally, we used the inequalities
\[
  \Big| \Phi_{c_{k + 1}}(x_{k + 1}, u_{k + 1}) - \Phi_{c_{k + 1}}(x_k, u_k) \Big| < 0.001, \quad 
  \varphi(x_{k + 1}, u_{k + 1}) < 0.1
\]
as a stopping criterion for the algorithm. 

To numerically solve convex subproblems on Steps~1--4 of B-STEP-DCA, the interval $[0, T]$ was discretised into $N$
subintervals of equal length, the corresponding integrals were approximated with the use of the left Riemann sums,
while the derivative $\dot{x}(t)$ was approximated by finite forward differences. The corresponding discretised
problems were then solved with the use of \texttt{cvx}, a {\sc Matlab} package for specifying and solving convex
programs \cite{CVXPackage,GrantBoyd_CVX}. cvx package was used with default settings.

We applied B-STEP-DCA to a slightly modified version of the nonsmooth optimal control problem with a
nonsmooth nonconvex state constraint from Outrata\cite{Outrata83} that has the form:
\begin{align*}
  &\minimise \enspace J(x, u) = \int_0^T x_2(t) \max\{ 0, u(t) \} \, dt
  \\
  &\text{subject to} \enspace 
  \begin{aligned}[t]
    &\dot{x}_1(t) = x_2(t), \quad
    \dot{x}_2(t) = \frac{1}{m} u(t) - P x_2(t) |x_2(t)| - Q x_2(t), \quad t \in [0, T],
    \\
    &x_1(0) = x_2(0) = 0, \quad x_1(T) = 200, \quad x_2(T) = 0, \quad
    \underline{u} \le u(t) \le \overline{u} \quad t \in [0, T],
    \\
    & x_2(t) - \min\Big\{ 7, \max\big\{ 7 - 0.3(x_1 - 90), 4, 4 + 0.3(x_1 - 120) \big\} \Big\} \le 0, 
    \quad t \in [0, T].
  \end{aligned}
\end{align*}
The problem consists in driving a train from point $0$ to point $200$ and stopping there in time $T > 0$, while
obeying a certain speed limit described by the state constraint. Here $x_1(t)$ is the position of the train at time $t$,
$x_2(t)$ is its speed, $m$ is the mass of the train, while parameters $P > 0$ and $Q > 0$ describe its dynamic
behaviour. We approximated the discontinuous speed limit
\[
  x_2(t) \le \begin{cases}
    7, & \text{if } x_1(t) \notin [100, 120], 
    \\
    4, & \text{if } x_1(t) \in [100, 120]
  \end{cases}
\]
from Outrata\cite{Outrata83}, Example~5.1 by the corresponding continuous nonsmooth state constraint. Note that
\[
  \min\Big\{ 7, \max\big\{ 7 - 0.3(x - 90), 4, 4 + 0.3(x - 120) \big\} \Big\}
  = \begin{cases}
    7, & \text{if } x_1(t) \notin [90, 130],
    \\
    4, & \text{if } x_1(t) \in [100, 120],
    \\
    7 - 0.3(x - 90), &\text{if } x_1(t) \in [90, 100],
    \\
    4 + 0.3(x - 120), &\text{if } x_1(t) \in [120, 130].
  \end{cases}
\]
We used the same parameters of the problem as given in Example~5.1 of Ref.\cite{Outrata83}:
\[
  \underline{u} = - \frac{2}{3} 10^5, \quad \overline{u} = \frac{2}{3} 10^5, \quad
  P = 0.78 \cdot 10^{-4}, \quad Q = 0.28 \cdot 10^{-3}, \quad T = 48,
\]
but changed the value of the mass of the train $m$ from $3 \cdot 10^5$ to $10^5$, since for $m = 3 \cdot 10^5$ the
problem turned out to be infeasible. To improve numerical stability and avoid ill-conditioning, we rescaled the control
as follows: $\widehat{u} = (1/m) u$. This transformation corresponds to putting $m = 1$, $\underline{u} = - 2/3$, and 
$\overline{u} = 2/3$ for the original problem. Finally, the interval $[0, T]$ was discretised into $N = 480$
subintervals of equal length and we chose the functions $x_{01}(t) \equiv 0$, $x_{02}(t) \equiv 0$, and 
$u_0(t) \equiv 0$ as initial guess.

DC decompositions of the cost functional and nonlinear constraints were constructed with the use of the following
equalities
\begin{align*}
  &x_2 [u]_+ = \frac{1}{2} \Big( \big( [x_2]_+ + [u]_+ \big)^2 + [-x_2]_+^2 \Big)
  - \frac{1}{2} \Big( \big( [-x_2]_+ + [u]_+ \big)^2 + [x_2]_+^2 \Big), \quad
  x_2 |x_2| = [x_2]_+^2 - [- x_2]_+^2,
  \\
  &\min\Big\{ 7, \max\big\{ 7 - 0.3(x_1 - 90), 4, 4 + 0.3(x_1 - 120) \big\} \Big\}
  \begin{aligned}[t]
    &= \max\big\{ 7 - 0.3(x_1 - 90), 4, 4 + 0.3(x_1 - 120) \big\}
    \\
    &- \max\big\{ 0, - 0.3(x_1 - 90), - 3 + 0.3(x_1 - 120)  \big\},
  \end{aligned}
\end{align*}
which can be easily verified directly. Here $[t]_+ = \max\{ 0, t \}$.

To compare different versions of the method and better understand the effect of the line search and the difference
between $L_1/L_{\infty}$ penalty terms, we considered two different sets $X_0$:
\begin{align*}
  X_0 &= \Big\{ (x, u) \in X \Bigm| \dot{x}_1 = x_2, \quad x_1(0) = x_2(0) = 0, \quad x_1(T) = s, \quad x_2(T) = 0, 
  \quad
  \underline{u} \le u(t) \le \overline{u}, \quad t \in [0, T] \Big\}
  \\
  X_0 &= \Big\{ (x, u) \in X \Bigm| \dot{x}_1 = x_2, \quad 
  x_1(0) = x_2(0) = 0, \quad x_1(T) = s, \quad x_2(T) = 0 \Big\}
\end{align*}
In the first case, the presence of inequality constraints on control does not allow one to use line search. We denote
the corresponding method by STEP-DCA${}_0$. In the second case, the constraints 
$\underline{u} \le u(t) \le \overline{u}$ were included directly into the penalty function with the use of either the
$L_1$ or $L_{\infty}$ penalty terms:
\[
  \int_0^T \max\big\{ u(t) - 2/3, 0, - u(t) - 2/3 \big\} \, dt, \quad
  \max\big\{ 0, \| u \|_{\infty} - 2/3 \big\}.
\]
In the case when the line search is employed, we denote the corresponding methods by B-STEP-DCA${}_1$ and
B-STEP-DCA${}_{\infty}$, respectively, while in the case when the line search is not used, we denote these methods by
STEP-DCA${}_1$ and STEP-DCA${}_{\infty}$. Thus, we compared five different versions of Algorithmic
Pattern~\ref{alg:Boosted_SEP_DCA}. The results of our numerical experiments are presented in Table~\ref{tab:Outrata},
containing the computation time in seconds, the number of iterations $k$, the final value of the penalty parameter
$c_k$, as well as the the value of the cost functional and the infeasibility measure $\varphi$ at the last computed
point. The ``optimal'' control and the ``optimal''\footnote{In the context of nonsmooth DC optimisation, the terms
\textit{critical} control and \textit{critical} tachogram seem to be more appropriate, but such terms might be
incorrectly understood in the context of optimal control theory.} tachogram that assigns to each position of the train
the computed ``optimal'' velocity corresponding to the best computed value of the cost functional are given in
Figure~\ref{fig:Outrata}.

\begin{figure}[t!]
\centering
\includegraphics[width=0.45\linewidth]{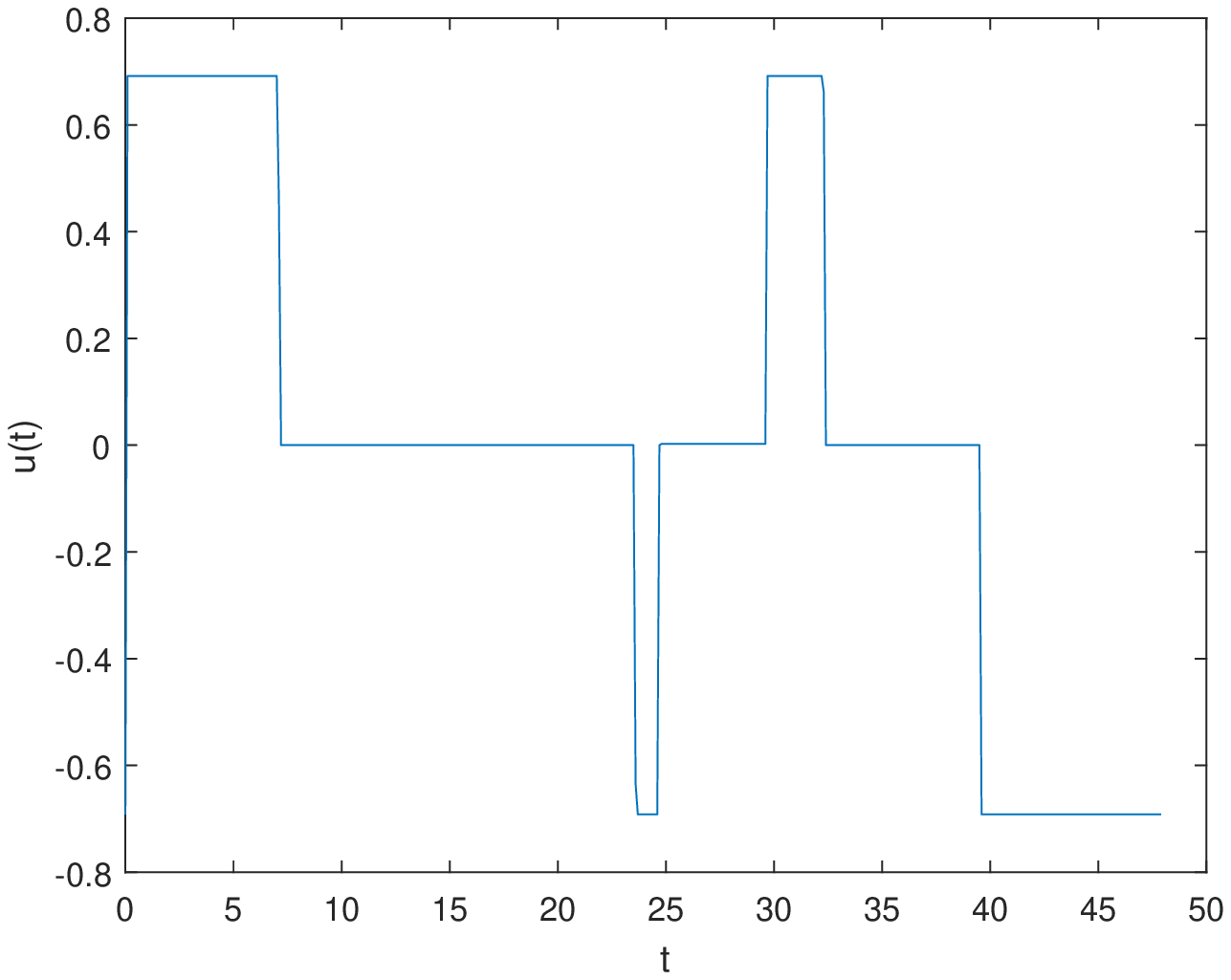}
\includegraphics[width=0.45\linewidth]{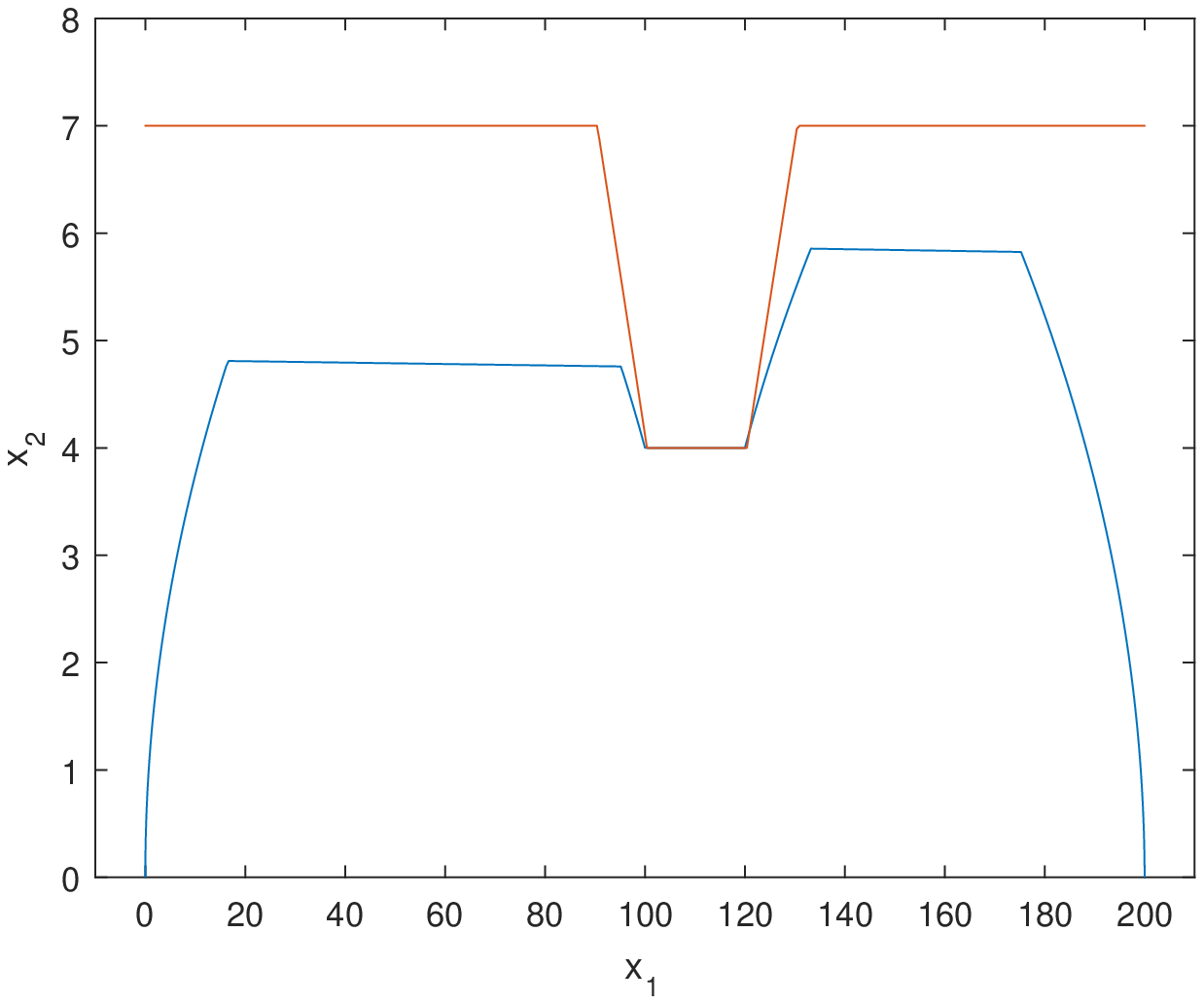}
\caption{The ``optimal'' control (left figure) and the ``optimal'' tachogram along with the graph of the state
constraint (right figure).}
\label{fig:Outrata}
\end{figure}

\begin{table} [th!]
\caption{The results of numerical experiments for 5 different versions of Algorithmic
Pattern~\ref{alg:Boosted_SEP_DCA}.}
\begin{tabular}{| c | c | c | c | c | c |} 
  \hline
  {} & $time$ & $k$ & $c_k$ & $J(x_k, u_k)$ & $\varphi(x_k, u_k)$ \\ 
  \hline
  STEP-DCA${}_0$ & $489.03$ & $44$ & $100$ & $21.8549$ & $0.0064$ \\
  \hline
  STEP-DCA${}_1$ & $489.95$ & $45$ & $100$ & $21.9936$ & $0.006$ \\
  \hline
  B-STEP-DCA${}_1$ & $491.9$ & $40$ & $100$ & $21.9936$ & $0.0064$ \\
  \hline
  STEP-DCA${}_{\infty}$ & $607.67$ & $58$ & $100$ & $20.5023$ & $0.0195$ \\
  \hline
  B-STEP-DCA${}_{\infty}$ & $574.89$ & $55$ & $100$ & $20.5988$ & $0.0255$ \\
  \hline
\end{tabular}
\label{tab:Outrata}
\end{table}

Let us comment on the results of our numerical experiments. Firstly, it should be noted that in all 5 cases the
method increased the penalty parameter only once, on Step 3 to ensure sufficient decay of the infeasibility measure
closer towards the end of the optimisation process. In other words, the method had to solve the convex subproblem
\eqref{subprob:MainStep} the second time during the same iteration for a larger value of the penalty parameter only once
throughout the optimisation process.

Recall that we solved all convex subproblems with the use of \texttt{cvx} package with default settings. This package
reports whether the problem was solved successfully, inaccurately or the method failed to converge to an optimal
solution. In all 5 cases, several convex subproblems \eqref{subprob:MainStep} on Step~1 were solved inaccurately
by \texttt{cvx}, while some of the optimal infeasibility subproblems \eqref{subprob:OptInfeasMeas} on Step~2 were
either solved inaccurately or \texttt{cvx} failed to find an optimal solution of these subproblems. Note that such cases
are allowed by B-STEP-DCA due to the presence of nonzero optimality tolerances $\varepsilon_k$, which can be arbitrarily
large, as long as they remain bounded throughout iterations. Despite the fact that some convex subproblems were solved
inaccurately, (B-)STEP-DCA successfully found a point satisfying the stopping criterion and in the case when the line
search was not employed preserved the monotonicity of the sequence $\{ \Phi_{c_{k + 1}}(x_k, u_k) \}$ (see
Lemma~\ref{lem:PenFunctionDecay}), which demonstrates the notable robustness of
this method with respect to computational errors and inaccurate solution of convex optimisation subproblems.

For both $L_1$ and $L_{\infty}$ penalty terms, the number of iterations of the method before termination reduced when
the nonmonotone line search was employed. Moreover, in the case of $L_{\infty}$ penalty term the use of the
nonmonotone line search also allowed one to slightly reduce the computation time. One can expect that for more complex
problems the effect of the line search will be greater. In addition, let us note that the line search was indeed
nonmonotone, in the sense that on some iterations the value of the penalty function decreased (sometimes very
significantly) after the line search was performed, while on others the value of the penalty function increased as the
result of the line search.

Finally, it is worth mentioning that although the values of the cost functional are different for different versions of
Algorithmic Pattern~\ref{alg:Boosted_SEP_DCA}, the controls and the trajectories computed in all 5 cases had exactly the
same structure depicted in Fig.~\ref{fig:Outrata}. The smaller value of the cost functional corresponded to slightly
longer period of acceleration of the train during the first phase, that is, before decelerating to satisfy the speed
limit. Let us note that the value of the cost functional $20.5023$ is not optimal. By carefully choosing an initial
point with the same structure as depicted in Fig.~\ref{fig:Outrata}, but having a longer initial acceleration period, we
were able to find an admissible control corresponding to the smaller value of the cost functional 
$J(x, u) \approx 19.33$. This fact clearly demonstrates that one can guarantee convergence of B-STEP-DCA only to
critical points of an optimal control problem, which might be non-optimal, as well as highlights the well-known issue of
convergence of \textit{nonsmooth} DC optimisation methods to non-optimal critical
points, which might be even locally non-optimal (see
Refs.\cite{deOliveiraTcheou,deOliveira2019,PangRazaviyaynAlvarado}). Therefore, for practical problems it
is advisable to combine B-STEP-DCA with global search techniques for DC optimal control problems developed by
Strekalovsky et al\cite{Strekalovsky2013,StrekalovskyYanulevich2008,StrekalovskyYanulevich2013,StrY2016}.

\section{Conclusions}

We presented an exact penalty-type method based on the boosted DCA, called B-STEP-DCA, for solving nonsmooth optimal
control problems with nonsmooth state and control constraints having DC structure, that is, optimal control problems
whose cost functional and constraints can be represented as the difference of smooth or nonsmooth convex functions. The
method is based on solving a sequence of penalised convex optimisation subproblems obtained from the original problem
via linearisation of the concave part of the cost functional and constraints. The method adaptively adjusts the penalty
parameter following the steering exact penalty rules and employs a nonmonotone line search technique with adaptive
rules for choosing the trial step sizes and nonmonotone line search tolerances. 

We proved the correctness of the method in the general case (that is, we proved the fact that only a finite number of
optimisation subproblems are solved on each iteration of the method) and provided conditions ensuring correctness of the
stopping criteria under which the method necessarily terminates after a finite number of iterations. We also presented a
detailed convergence analysis of the method, including the proof of its global convergence to approximately critical
points of the optimal control problem under consideration and some results on convergence of the infeasibility measure.

In the end of the paper, the performance of the method is illustrated by means of a numerical example. This example
demonstrated robustness of the proposed method with respect to computational errors and inaccurate solution of convex
optimisation subproblems, as well as the fact that the employment of nonmonotone line search leads to reduction of the
number of iteration of the method before termination and, in some cases, reduction of the computation time.

Our convergence analysis and numerical experiments show that in the general case B-STEP-DCA converges only to a critical
point of an optimal control problem, which is not necessarily locally optimal. Therefore, one important direction of 
future research consists in an extension of existing finite dimensional unconstrained nonsmooth DC optimisation
techniques ensuring convergence to d-stationary points (that is, points satisfying optimality conditions from
Prp.~\ref{prp:OptCond}) to the case of nonsmooth optimal control problems. Future research can also be concentrated on
an integration of B-STEP-DCA, as a local search method, into global search methods for DC optimal control problems.

\section*{Acknowledgments}

The author is sincerely grateful to his colleague A.V. Fominyh for many valuable discussions on nonsmooth optimal
control and exact penalty methods that played a significant role in the preparation of this paper. Also, the
author wishes to express his thanks to prof. A.S. Strekalovsky for many critical comments on the author's earlier papers
on  DC optimisation that helped to improve the quality of this article.

\bibliographystyle{abbrv}  
\bibliography{DC_OptimalControl_bibl}

\end{document}